\date{}
\renewcommand{\uppercasenonmath}[1]{}
\numberwithin{equation}{section} \theoremstyle{plain}
\newtheorem{lem}{Lemma}[section]
\newtheorem{cor}[lem]{Corollary}
\newtheorem{prop}[lem]{Proposition}
\newtheorem{thm}[lem]{Theorem}
\newtheorem{definition}[lem]{Definition}
\newtheorem{Ex}[lem]{Example}
\newtheorem{Quest}[lem]{Question}
\newtheorem{Property}[lem]{Property}
\newtheorem{Properties}[lem]{Properties}
\newtheorem{Subprops}{}[lem]
\newtheorem{Para}[lem]{}
\newtheorem{remark}[lem]{Remark}
\newtheorem{rem}[lem]{Remark}
\newtheorem*{ack*}{ACKNOWLEDGEMENTS}
\newcommand{\pf}{\noindent\begin {proof}}
\newcommand{\epf}{\end{proof}}
\newcommand{\s}{\stackrel}
\newcommand{\Ext}{\mbox{\rm Ext}}
\newcommand{\Hom}{\mbox{\rm Hom}}
\newcommand{\add}{\mbox{\rm add}}
\newcommand{\A}{\mathcal{A}}
\newcommand{\B}{\mathcal{B}}
\newcommand{\modcat}{\ensuremath{\mbox{{\rm -mod}}}}
\newcommand{\opp}{^{op}}
\newcommand{\pd}{{\rm proj.dim}}
\newcommand{\HA}{\mathcal{H}(\mathcal{A})}
\newcommand{\HB}{\mathcal{H}(\mathcal{B})}
\newcommand{\End}{{\rm End}}
\begin{document}
\begin{center}
{\Large  \bf  Resolving dualities and applications to homological invariants}

\vspace{0.5cm} Hongxing Chen and Jiangsheng Hu  \\
\end{center}

\bigskip
\centerline { \bf  Abstract}
\leftskip10truemm \rightskip10truemm \noindent
Dualities of resolving subcategories of finitely generated modules over Artin algebras are characterized as dualities with respect to Wakamatsu tilting bimodules. By restriction of these dualities to resolving subcategories of finitely generated modules with finite projective or Gorenstein-projective dimensions, Miyashita's duality and Huisgen-Zimmermann's correspondence on tilting modules as well as their Gorenstein version are obtained. Applications include
constructing triangle equivalences of derived categories of finitely generated Gorenstein-projective modules and showing the invariance of higher algebraic $K$-groups and semi-derived Ringel-Hall algebras of finitely generated Gorenstein-projective modules under tilting.
\\[2mm]
{\bf Keywords:} Duality; Gorenstein-projective module; Resolving
subcategory; Semi-derived Ringel-Hall algebra; Wakamatsu tilting module. \\
{\bf 2020 Mathematics Subject Classification:}  Primary 16G10, 18G25, 16W22; Secondary 16E30, 16E20, 18G80.

\leftskip0truemm \rightskip0truemm

{\footnotesize\tableofcontents\label{contents}}

\section {Introduction}
In homological algebra and representation theory, equivalences of additive categories of different types, such as derived equivalences and stable equivalences of Morita type (see \cite{Angeleri,Broue,Happel1988,keller,Rickard}), have been applied successfully to compare homological invariants or dimensions of relevant algebras and modules. For example, higher algebraic $K$-groups and finiteness of global and finitistic dimensions of rings are preserved under derived equivalences (see \cite{Daniel,Pan-and-xi}). As important subjects in tilting theory, tilting modules (see \cite{Brenner,Happel-Ringel,Happel1988,Miyashita}) not only provide a class of derived equivalences of algebras but also have some special properties related to resolving subcategories of module categories. Typical results include Brenner-Butler tilting theorem (see \cite{Brenner}) and a one-to-one correspondence between basic tilting modules and contravariant finite resolving subcategories of modules with finite projective dimension (see \cite{AR1991}). However, in contrast to covariant equivalences, dualities of categories have received far less attention. A classical result, due to Morita \cite{Anderson}, characterizes dualities of full subcategories of modules as dualities with respect to some balanced bimodule provided that the categories contain the appropriate regular modules. Moreover, the bimodule defines a Morita duality if and only if it is an injective cogenerator as one-sided modules. In the literature, there have been two generalizations of Morita dualities over Artin algebras. For tilting modules of finite projective dimension, Miyashita \cite{Miyashita} established dualities of some resolving subcategories of finitely generated modules with finite projective dimension; for Wakamatsu tilting modules (a generalization of tilting modules), Green, Reiten and Solberg \cite{GRS} established dualities of some resolving subcategories of finitely generated modules with relative coresolutions of arbitrary lengths. In these dualities, (Wakamatsu) tilting modules play the role of relative Ext-injective cogenerators, similar to injective cogenerators in Morita dualities. Recently, Huisgen-Zimmermann provided a converse of Miyashita's result: any dualities of resolving subcategories of finitely generated modules with finite projective dimension over Artin algebras are always afforded by tilting bimodules provided that the dualities are induced by strictly exact functors (see \cite{Huisgen,HS}).

In the paper, we characterize general dualities of resolving subcategories of finitely generated modules over Artin algebras as dualities with respect to a Wakamatsu tilting bimodule which is determined by the dualities. When the dualities can be restricted to resolving subcategories of modules with finite projective, Gorenstein-projective or semi-Gorenstein-projective dimension, we show that the Wakamatsu tilting bimodule is a tilting module. This leads to a Gorenstein version of  Miyashita's duality and Huisgen-Zimmermann's correspondence. As applications of dualities of resolving subcategories of finitely generated Gorenstein-projective modules, we show that both higher algebraic $K$-groups and semi-derived Ringel-Hall algebras of finitely generated Gorenstein-projective modules are preserved under tilting.

To state our results precisely, we first introduce some notation and definitions.

Throughout the paper, all algebras considered are Artin algebras and all modules are finitely generated left modules. Let $A$ be an algebra. Denote by $A\modcat$ the category of left $A$-modules and by $A\opp$ the opposite algebra of $A$. For a natural number $n$, the full subcategories of $A\modcat$ consisting of modules with projective, Gorenstein-projective and semi-Gorenstein-projective dimension at most $n$ are denoted by ${\mathcal{P}^{\leqslant n}(A)}$, ${\mathcal{GP}^{\leqslant n}(A)}$ and ${\mathcal{SGP}^{\leqslant n}(A)}$, respectively. For simplicity, we write ${\mathcal{P}(A)}$, ${\mathcal{GP}(A)}$ and ${\mathcal{SGP}(A)}$ for ${\mathcal{P}^{\leqslant 0}(A)}$, ${\mathcal{GP}^{\leqslant 0}(A)}$ and ${\mathcal{SGP}^{\leqslant 0}(A)}$, respectively.
As usual, the category of $A$-modules with finite projective dimension is denoted by $\mathcal{P}^{<\infty}(A)$.

Let $T$ be an $A$-module. We denote by $^{\perp}({_A}T)$ the full subcategory of $A\modcat$ consisting of modules $X$ with $\Ext_A^n(X,T)=0$ for all $n\geqslant 1$, and by $\mathcal{W}{(_{A}T)}$ the full subcategory of $^{\perp}({_A}T)$ consisting of modules $X$ which has an $\add(T)$-coresolution such that it stays exact after applying the functor $\Hom_{A}(-,T)$. Clearly, $\mathcal{W}{(_{A}A)}$ equals the category of Gorenstein-projective $A$-modules. Following \cite{Wakamatsu,Mantese}, $T$ is called a \emph{Wakamatsu tilting} $A$-module if both $A$ and $T$ belong to $\mathcal{W}{(_{A}T)}$. Further, if ${_A}T$ has finite projective dimension and ${_A}A$ has a finite $\add(T)$-coresolution, then $T$ is called a \emph{tilting} $A$-module. A close relationship between Wakamatsu tilting modules and tilting modules is predicted by the \emph{Wakamatsu tilting conjecture} which says that if a Wakamatsu tilting $A$-module has finite projective dimension, then it must be a tilting module (see \cite[Chapter IV]{Beli-Reiten}). The conjecture is of particular interest due to its strong connection with several long-standing homological conjectures such as the finitistic dimension conjecture and the generalized Nakayama conjecture (see \cite[Section 4]{Mantese}).

Let $B$ be another algebra. An $A$-$B$-bimodule ${_A}M{_B}$ is said to be \emph{faithfully balanced} if the canonical algebra homomorphisms $B\to{\End_A(M)\opp}$ and $A\to \End_{B\opp}(M)$ are isomorphisms; \emph{Wakamatsu tilting} (respectively, \emph{tilting}) if it is faithfully balanced and ${_A}M$ is Wakamatsu tilting (respectively, tilting). Observe that if $T$ is a Wakamatsu tilting $A$-module, then it is a Wakamatsu tilting $A$-$B$-bimodule
with $B=\End_A(T)\opp$, the endomorphism algebra of $T$ over $A$.

Motivated by dualities on tilting modules in \cite{Miyashita,Huisgen}, we introduce the definition of resolving dualities in abelian categories.

\begin{definition}\label{definition:2.1} Let $\mathcal{A}$ and $\mathcal{B}$ be abelian categories with enough projective objects, and let $\mathcal{C}$ and $\mathcal{D}$ be full subcategories of $\mathcal{A}$ and  $\mathcal{B}$, respectively.
Contravariant additive functors
$$F:\mathcal{C}\to\mathcal{D}\quad\mbox{and}\quad G:\mathcal{D}\to\mathcal{C}$$ are called \emph{inverse resolving dualities} between $\mathcal{C}$ and $\mathcal{D}$ if the following conditions hold:

\begin{enumerate}
\item $\mathcal{C}\subseteq \mathcal{A}$ and $\mathcal{D}\subseteq \mathcal{B}$ are resolving subcategories, that is, they contains all projective objects
and are closed under isomorphisms, extensions and kernels of epimorphisms.

\item  The compositions $G\circ F$ and $F\circ G$ are naturally isomorphic to the identity functors.

\item $F$ and $G$ are exact functors between $\mathcal{C}$ and $\mathcal{D}$ which are regarded as fully exact subcategories of $\mathcal{A}$ and $\mathcal{B}$, respectively.

\end{enumerate}
\end{definition}

Note that $F$ and $G$ between full subcategories of module categories satisfying Definition \ref{definition:2.1}(3) were called \emph{strictly exact} functors by Huisgen-Zimmermann in \cite{Huisgen}.  In our discussions, given that exact structures of resolving subcategories (as fully exact subcategories, see Section \ref{DCEC}) of abelian categories are highlighted, we prefer to use the terminology of exact functors between exact categories in Definition \ref{definition:2.1}(3).

Clearly, Morita duality and Miyachita's duality (see Theorem \ref{thm:2.3}) are resolving dualities. Our main result conveys that any resolving dualities between full subcategories of module categories of algebras are always afforded by Wakamatsu tilting bimodules, which are tilting modules when the dualities can be restricted to smaller resolving subcategories.

\begin{thm}\label{THM}
Let $A$ and $B$ be Artin algebras and let $\mathcal{C}\subseteq A\modcat$ and $\mathcal{D}\subseteq B^{op}\modcat$ be full subcategories. Suppose that $F:\mathcal{C}\to\mathcal{D}$ and $G:\mathcal{D}\to\mathcal{C}$ are inverse resolving dualities.  Then:

$(1)$ There exists a Wakamatsu tilting bimodule $_{A}T_{B}$ such that
\begin{enumerate}
\item[(a)] $T_{B}\cong F(_{A}A)$ and $_{A}T\cong G(B_{B})$;
\item[(b)] $F\cong\Hom_{A}(-,T)|_{\mathcal{C}}$ and  $G\cong\Hom_{B^{op}}(-,T)|_{\mathcal{D}}$;
\item[(c)] $\mathcal{C}\subseteq{\mathcal{W}{(_{A}T)}}$ and  $\mathcal{D}\subseteq{\mathcal{W}{(T_{B})}}$.
\end{enumerate}

$(2)$ The bimodule ${_A}T{_B}$ is a tilting bimodule if and only if $F$ and $G$ can be restricted to any one of inverse dualities of the following types:
$$\mathcal{C}\cap{\mathcal{P}^{< \infty }(A)}\simeq \mathcal{D}\cap{\mathcal{P}^{<\infty}(B^{op})},\quad\quad\;\; \mathcal{C}\cap{\mathcal{P}^{\leqslant n}(A)}\simeq \mathcal{D}\cap{\mathcal{P}^{\leqslant m}(B^{op})},$$
$$\mathcal{C}\cap{\mathcal{GP}^{\leqslant n}(A)}\simeq \mathcal{D}\cap{\mathcal{GP}^{\leqslant m}(B^{op})},\quad\quad  \;\;\mathcal{C}\cap{\mathcal{SGP}^{\leqslant n}(A)}\simeq \mathcal{D}\cap{\mathcal{SGP}^{\leqslant m}(B^{op})},$$
where $n$ and $m$ are some natural numbers.
\end{thm}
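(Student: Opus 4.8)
The plan is to reconstruct the bimodule $_AT_B$ directly from the functors and then identify $F,G$ with Hom-functors. First I would set $T_B := F(_AA)$, using that $_AA$ is projective in $A\modcat$ and hence lies in the resolving subcategory $\mathcal{C}$; its image lies in $\mathcal{D}\subseteq B\opp\modcat$. The right $B$-action on $T$ should come from functoriality: since $B\opp\cong\End_A(A)$ acts on $_AA$ by left multiplication, applying the contravariant functor $F$ turns this into a left $B\opp$-action, i.e.\ a right $B$-action, on $T=F(A)$. Similarly $_AT' := G(B_B)$ carries a left $A$-action coming from $A\cong\End_{B\opp}(B_B)\opp$ under $G$. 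The first key step is to show $_AT\cong{_A}T'$ as $A$-$B$-bimodules; this should follow from the natural isomorphisms $GF\cong\mathrm{id}_{\mathcal{C}}$ and $FG\cong\mathrm{id}_{\mathcal{D}}$ together with the fact that $F$ sends the regular representation to $T$ — essentially $G(T_B)=GF(_AA)\cong{_A}A$, matched against the bimodule structure, forces $_AT'\cong G(B_B)$ to be the correct $A$-side of $T$. I expect this bookkeeping with the two actions to be the first place where care is needed.

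Next I would establish part (1)(b), i.e.\ $F\cong\Hom_A(-,T)|_{\mathcal{C}}$. Since $\mathcal{A}=A\modcat$ has enough projectives and $\mathcal{C}$ is resolving, every $X\in\mathcal{C}$ has a projective presentation $P_1\to P_0\to X\to 0$ with $P_0,P_1\in\add(_AA)\subseteq\mathcal{C}$, and this is an admissible exact sequence in the fully exact subcategory $\mathcal{C}$. Both $F$ and $\Hom_A(-,T)$ are additive contravariant functors that agree on $\add(_AA)$ (the former by definition of $T$, with the $B$-module structures matching by the action computation above; the latter because $\Hom_A(A,T)\cong T_B$). Because $F$ is exact on $\mathcal{C}$ and $\Hom_A(-,T)$ is left exact, applying both to the presentation gives $F(X)\cong\ker\big(F(P_0)\to F(P_1)\big)\cong\ker\big(\Hom_A(P_0,T)\to\Hom_A(P_1,T)\big)\cong\Hom_A(X,T)$, and one checks naturality via the standard argument comparing presentations. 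The symmetric statement for $G$ follows the same way. This step also yields (1)(c): since $G\circ F\cong\mathrm{id}$, for $X\in\mathcal{C}$ the module $\Hom_{B\opp}(\Hom_A(X,T),T)\cong X$; applying the exactness of $F,G$ to projective resolutions in $\mathcal{C}$ of $X$ shows $\Ext^i_A(X,T)=0$ for $i\ge1$ and produces the required $\add(T)$-coresolution, so $\mathcal{C}\subseteq\mathcal{W}(_AT)$, and dually $\mathcal{D}\subseteq\mathcal{W}(T_B)$. Taking $X=A$ gives $A\in\mathcal{W}(_AT)$ and $_AT=G(B)\in\mathcal{D}\subseteq\mathcal{W}(_AT)$ — wait, one needs $T\in\mathcal{W}(_AT)$, which holds since $T_B\in\mathcal{D}\subseteq\mathcal{W}(T_B)$ and then $G(T_B)=T$ lies in $\mathcal{C}\subseteq\mathcal{W}(_AT)$ — hence $_AT$ is Wakamatsu tilting; faithful balancedness follows from $\End_A(T)\opp\cong\End_{B\opp}(G(T))\opp\cong\End_{B\opp}(B_B)\opp=B$ using full faithfulness of $G$ on $\mathcal{D}$ (duality), and symmetrically $A\cong\End_{B\opp}(T)$.

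For part (2), one direction is the content of the existing duality theorems for tilting modules (Miyashita's duality, quoted as Theorem~\ref{thm:2.3}, and its Gorenstein analogue): if $_AT_B$ is tilting then $\Hom_A(-,T)$ and $\Hom_{B\opp}(-,T)$ restrict to dualities on the categories of modules of finite projective dimension, on $\mathcal{P}^{\leqslant n}$ versus $\mathcal{P}^{\leqslant m}$ for appropriate $n,m$ (controlled by $\pd{}_AT=\pd T_B$), and on the Gorenstein-projective and semi-Gorenstein-projective versions; intersecting with $\mathcal{C},\mathcal{D}$ and using (1) gives the restricted dualities. The substantive direction is the converse: assume $F,G$ restrict to a duality of one of the listed forms and deduce $\pd{}_AT<\infty$ together with $_AA$ having a finite $\add(T)$-coresolution. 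The cleanest route is to use that $T_B=F(_AA)$ lies in the target category of the restricted duality; e.g.\ in the $\mathcal{P}^{<\infty}$ case this says $T_B\in\mathcal{P}^{<\infty}(B\opp)$, i.e.\ $\pd T_B<\infty$, which for a Wakamatsu tilting bimodule forces $\pd{}_AT<\infty$ as well (the projective dimensions on the two sides of a Wakamatsu tilting bimodule coincide — a standard fact, or derivable from the $\add(T)$-coresolution of $A$). Then the finite $\add(T)$-coresolution of $_AA$: apply $G$ to a finite projective resolution $0\to Q_d\to\cdots\to Q_0\to T_B\to 0$ in $\mathcal{D}$ of $T_B$ (which exists and stays in $\mathcal{D}$ since $\mathcal{D}$ is resolving and $T_B\in\mathcal{D}\cap\mathcal{P}^{<\infty}(B\opp)$), using exactness of $G$ and $G(Q_i)\in\add(G(B_B))=\add(_AT)$, $G(T_B)\cong{}_AA$; this yields $0\to{_A}A\to G(Q_0)\to\cdots\to G(Q_d)\to0$, a finite $\add(_AT)$-coresolution of $_AA$. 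Hence $_AT$ is tilting. The $\mathcal{P}^{\leqslant n}$ case is identical with length bounds; the $\mathcal{GP}^{\leqslant n}$ and $\mathcal{SGP}^{\leqslant n}$ cases need the extra observation that $T_B\in\mathcal{GP}^{\leqslant m}(B\opp)$ (resp.\ $\mathcal{SGP}^{\leqslant m}$) together with $T_B$ Wakamatsu tilting already forces $\pd T_B<\infty$ — a Wakamatsu tilting module of finite Gorenstein-projective (or semi-Gorenstein-projective) dimension has finite projective dimension — after which the argument reduces to the previous case. I expect this last implication, $\Gpd T_B<\infty \Rightarrow \pd T_B<\infty$ for a Wakamatsu tilting bimodule, to be the main obstacle and the place where a genuine homological input (beyond formal diagram-chasing) is required; it is presumably isolated as a lemma earlier in the paper, and I would invoke it here.
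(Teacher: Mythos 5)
Your treatment of part (1) is essentially correct and differs from the paper only in packaging: the paper invokes a classical Morita-theoretic result (Anderson--Fuller, Theorem 23.5) to produce the faithfully balanced bimodule $T$ with $F\cong\Hom_A(-,T)|_{\mathcal C}$, whereas you reconstruct it by hand via projective presentations. Both are fine, and your derivation of $\mathcal C\subseteq\mathcal W({_A}T)$ (vanishing of $\Ext$ via exactness of $F$ on syzygy sequences, construction of the $\add(T)$-coresolution by applying $G$ to a projective resolution of $F(X)$) is the same mechanism the paper uses.

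For part (2), the direction ``tilting $\Rightarrow$ restrictions exist'' is fine, and your argument for the $\mathcal P^{<\infty}$ and $\mathcal P^{\leqslant n}$ cases of the converse is essentially right (indeed, you do not even need the questionable ``the two projective dimensions of a Wakamatsu tilting bimodule coincide''---since $B_B\in\mathcal D\cap\mathcal P^{\leqslant m}(B^{op})$ as well, the restricted duality directly gives $_AT\cong G(B_B)\in\mathcal C\cap\mathcal P^{\leqslant n}(A)$, whence $\pd{}_AT<\infty$).

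However, there is a genuine gap in the $\mathcal{GP}^{\leqslant n}$ and $\mathcal{SGP}^{\leqslant n}$ cases. You reduce these to the projective case by asserting that ``a Wakamatsu tilting module of finite Gorenstein-projective (or semi-Gorenstein-projective) dimension has finite projective dimension.'' This is not a known fact. Already the special case ``Gorenstein-projective $+$ Wakamatsu tilting $\Rightarrow$ projective'' is of Auslander--Reiten-conjecture type and open in general. The closest result in the paper, Lemma~\ref{prop:3.3}(2), gives the implication $\mathcal{GP}^{<\infty}\Rightarrow\mathcal P^{<\infty}$ only under the \emph{additional} hypothesis $\mathcal{GP}(A)\subseteq\mathcal W({_A}T)$. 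In your setting you only know $\mathcal C\subseteq\mathcal W({_A}T)$, and $\mathcal C$ is an arbitrary resolving subcategory which need not contain all of $\mathcal{GP}(A)$, so that hypothesis is not available. The paper avoids this entirely: the sufficiency of part (2) is obtained from Lemma~\ref{lem:3.6}(2)(c), which proves $\pd{}_AT<\infty$ by a syzygy argument internal to the duality---one shows the short exact sequence $0\to F(\Omega^s_A(T))\to F(P_s)\to F(\Omega^{s+1}_A(T))\to 0$ splits, using $\mathcal D\subseteq{}^{\perp}(T_B)$ together with $\mathcal V\subseteq\mathcal{SGP}(B^{op})={}^{\perp}(B_B)$ and a dimension shift, and concludes $\Omega^s_A(T)\in\add({_A}A)$. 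This argument uses the restricted duality itself, not any general reduction of Gorenstein-projective dimension to projective dimension for Wakamatsu tilting modules, and it is precisely the ingredient your proposal is missing.
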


Theorem \ref{THM} generalizes Morita duality, Miyachita's duality and Huisgen-Zimmermann's correspondence;
see Corollary \ref{cor:3.8} and Remark \ref{generalization}. According to \cite[Theorem 1.5(d)]{GRS}, associated with a Wakamatsu tilting bimodule $_{A}T_{B}$, the functors $\Hom_{A}(-,T):A\modcat\to B^{op}\modcat$ and $\Hom_{B^{op}}(-,T):B^{op}\modcat \to A\modcat$ can be restricted to inverse resolving dualities $$(\diamondsuit)\quad\quad\mathcal{W}{(_{A}T)}\simeq\mathcal{W}{(T_{B})}.$$
It follows from Theorem \ref{THM}(1) that $(\diamondsuit)$ are ``\emph{maximal}" resolving dualities between subcategories of $A\modcat$ and $B^{op}\modcat$ which can be restricted to the resolving dualities between $\mathcal{C}$ and $\mathcal{D}$.

Combining Theorem \ref{THM} with $(\diamondsuit)$, we obtain a Gorenstein version of Miyachita's duality  and Huisgen-Zimmermann's correspondence (see Theorems \ref{thm:2.3} and \ref{thm:2.4}).

\begin{cor}\label{GV} Let $A$ and $B$ be Artin algebras.

$(1)$ If $_{A}T_{B}$ is a tilting bimodule, then the functors $\Hom_{A}(-,T)$ and $\Hom_{B^{op}}(-,T)$
can be restricted to inverse resolving dualities
${^{\perp}(_{A}T)}\cap{\mathcal{GP}^{\leqslant \ell}(A)}\simeq {^{\perp}(T_{B})}\cap{\mathcal{GP}^{\leqslant \ell}(B^{op})}$,
where $\ell$ denotes the projective dimension of ${_A}T$.

$(2)$ Let $\mathcal{C}\subseteq A\modcat$ and $\mathcal{D}\subseteq B^{op}\modcat$ be full subcategories.
Suppose that  there are  inverse resolving dualities between $\mathcal{C}$ and $ \mathcal{D}$.
If {$\mathcal{GP}(A)\subseteq\mathcal{C}\subseteq {\mathcal{GP}^{\leqslant n }(A)}$ and $\mathcal{GP}(B^{op})\subseteq\mathcal{D}\subseteq {\mathcal{GP}^{\leqslant m}(B^{op})}$} for some natural numbers $n$ and $m$, then there is a tilting bimodule $_{A}T_{B}$ such that
$$\mathcal{C}={^{\perp}(_{A}T)}\cap{\mathcal{GP}^{\leqslant \ell}(A)}\quad\mbox{and}\quad  \mathcal{D}={^{\perp}(T_{B})}\cap{\mathcal{GP}^{\leqslant \ell}(B^{op})},$$
where $\ell$ denotes the projective dimension of ${_A}T$.
\end{cor}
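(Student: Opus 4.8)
\textbf{Proof proposal for Corollary \ref{GV}.}

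The plan is to derive both parts by feeding the appropriate input into Theorem \ref{THM} together with the ``maximal'' dualities $(\diamondsuit)$ coming from \cite[Theorem 1.5(d)]{GRS}. For part $(1)$, I would take $\mathcal{C}:={^{\perp}(_{A}T)}\cap{\mathcal{GP}^{\leqslant \ell}(A)}$ and $\mathcal{D}:={^{\perp}(T_{B})}\cap{\mathcal{GP}^{\leqslant \ell}(B^{op})}$ and verify directly that $\Hom_{A}(-,T)$ and $\Hom_{B^{op}}(-,T)$ restrict to inverse resolving dualities between them. Since $_{A}T_{B}$ is a tilting bimodule, it is in particular Wakamatsu tilting, so by $(\diamondsuit)$ these Hom-functors already give inverse resolving dualities $\mathcal{W}{(_{A}T)}\simeq\mathcal{W}{(T_{B})}$. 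The key point is that $\mathcal{W}{(_{A}T)}\supseteq\mathcal{C}$ and $\mathcal{W}{(T_{B})}\supseteq\mathcal{D}$, and that the two functors carry $\mathcal{C}$ into $\mathcal{D}$ and back. The containment $\mathcal{C}\subseteq\mathcal{W}{(_{A}T)}$ should follow because every $X\in{^{\perp}(_{A}T)}\cap{\mathcal{GP}^{\leqslant\ell}(A)}$ has, by the Gorenstein-dimension bound $\ell=\pd{_A}T$, a finite $\add(T)$-coresolution obtained from a finite addition of a projective resolution of a Gorenstein-projective module followed by truncation against $T$; the $\Ext$-vanishing $X\in{^{\perp}(_{A}T)}$ makes this coresolution $\Hom_{A}(-,T)$-exact. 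The remaining issue is to check that $\Hom_{A}(-,T)$ sends $\mathcal{GP}^{\leqslant\ell}(A)$ into $\mathcal{GP}^{\leqslant\ell}(B^{op})$ and preserves the $\Ext$-orthogonality, which follows from the exactness of the duality on $\mathcal{W}$ together with the fact that $\pd{_A}T=\pd T_{B}=\ell$ for a tilting bimodule (a standard consequence of faithful balancedness and the tilting hypotheses).

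For part $(2)$, I would apply Theorem \ref{THM} to the given inverse resolving dualities $F:\mathcal{C}\to\mathcal{D}$, $G:\mathcal{D}\to\mathcal{C}$. Theorem \ref{THM}(1) produces a Wakamatsu tilting bimodule $_{A}T_{B}$ with $T_{B}\cong F(_{A}A)$, $_{A}T\cong G(B_{B})$, $F\cong\Hom_{A}(-,T)|_{\mathcal{C}}$, $G\cong\Hom_{B^{op}}(-,T)|_{\mathcal{D}}$, and $\mathcal{C}\subseteq\mathcal{W}{(_{A}T)}$, $\mathcal{D}\subseteq\mathcal{W}{(T_{B})}$. Next, the hypothesis $\mathcal{GP}(A)\subseteq\mathcal{C}\subseteq\mathcal{GP}^{\leqslant n}(A)$ and its $B^{op}$-analogue put us exactly in the situation of Theorem \ref{THM}(2): since $\mathcal{C}=\mathcal{C}\cap\mathcal{GP}^{\leqslant n}(A)$ and $\mathcal{D}=\mathcal{D}\cap\mathcal{GP}^{\leqslant m}(B^{op})$, the dualities $F,G$ are (trivially) restrictions of themselves to dualities $\mathcal{C}\cap\mathcal{GP}^{\leqslant n}(A)\simeq\mathcal{D}\cap\mathcal{GP}^{\leqslant m}(B^{op})$, so Theorem \ref{THM}(2) forces $_{A}T_{B}$ to be a tilting bimodule. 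Let $\ell:=\pd{_A}T$.

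It then remains to identify $\mathcal{C}$ and $\mathcal{D}$ precisely. The inclusion $\mathcal{C}\subseteq{^{\perp}(_{A}T)}\cap\mathcal{GP}^{\leqslant\ell}(A)$ should come from $\mathcal{C}\subseteq\mathcal{W}{(_{A}T)}\subseteq{^{\perp}(_{A}T)}$ together with the bound $\mathcal{C}\subseteq\mathcal{GP}^{\leqslant n}(A)$; one has to upgrade the bound $n$ to $\ell$ by using $\mathcal{GP}(A)\subseteq\mathcal{C}$ and the structure of a resolving subcategory inside $\mathcal{GP}^{\leqslant n}(A)$ — concretely, any $X\in\mathcal{C}$ fits in a short exact sequence $0\to K\to P\to X\to 0$ with $P$ projective and $K\in\mathcal{C}$ (as $\mathcal{C}$ is resolving), and iterating $\ell$ times lands $K$ in $\mathcal{GP}(A)\subseteq\mathcal{C}$; the number of steps needed is controlled by $\ell=\pd{_A}T$ via the duality $(\diamondsuit)$ applied to $\mathcal{GP}(B^{op})$. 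For the reverse inclusion ${^{\perp}(_{A}T)}\cap\mathcal{GP}^{\leqslant\ell}(A)\subseteq\mathcal{C}$, I would use that $\mathcal{C}$ is resolving and closed under the relevant operations, plus the established duality $(\diamondsuit)$: given $X$ in the right-hand side, express $X$ via a finite resolution by objects of $\mathcal{C}$ (possible since $\mathcal{GP}(A)\subseteq\mathcal{C}$ and $X$ has finite Gorenstein-projective dimension) and use closure of $\mathcal{C}$ under kernels of epimorphisms and extensions to conclude $X\in\mathcal{C}$; the $\Ext$-vanishing $X\in{^{\perp}(_{A}T)}$ ensures the resolution stays in $\mathcal{W}{(_{A}T)}$ so that no obstruction arises. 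The same argument on the $B^{op}$-side gives $\mathcal{D}={^{\perp}(T_{B})}\cap\mathcal{GP}^{\leqslant\ell}(B^{op})$.

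The main obstacle I anticipate is the bookkeeping with dimension bounds: showing that the a priori bounds $n,m$ coming from the hypothesis can both be replaced by the single intrinsic number $\ell=\pd{_A}T=\pd T_{B}$, and that the Hom-dualities genuinely preserve membership in ${^{\perp}(_{A}T)}\cap\mathcal{GP}^{\leqslant\ell}$ in both directions. This requires carefully combining the exactness of $F$ and $G$ (Definition \ref{definition:2.1}(3)), the identification $F\cong\Hom_{A}(-,T)$, the tilting property (so that $\pd$ on both sides equals $\ell$), and the description of $\mathcal{W}{(_{A}T)}$-objects with finite Gorenstein-projective dimension as exactly those in ${^{\perp}(_{A}T)}\cap\mathcal{GP}^{\leqslant\ell}$, which is the content of Theorem \ref{thm:2.4} in its Gorenstein form. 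Once these compatibilities are in place, both parts follow formally.
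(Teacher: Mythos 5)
Your global strategy is the right one — part (1) should come from the Green--Reiten--Solberg duality $(\diamondsuit)$ combined with an identification of $\mathcal{W}({_A}T)$, and part (2) from Theorem \ref{THM}(1)(2) followed by Lemma \ref{lem:3.6} — and your application of Theorem \ref{THM}(2) to get that ${_A}T{_B}$ is tilting (via $\mathcal{C}=\mathcal{C}\cap\mathcal{GP}^{\leqslant n}(A)$, etc.) is exactly what the paper does. However, the details in two places are not quite right.

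For (1), your plan of ``verify that the Hom-functors carry $\mathcal{C}$ into $\mathcal{D}$ and back'' is more work than needed and somewhat hand-wavy on the $\add(T)$-coresolution construction; the paper's proof is a one-liner because Corollary \ref{cor:3.5}(3) gives an \emph{equality} $\mathcal{W}({_A}T)={^{\perp}({_A}T)}\cap\mathcal{GP}^{\leqslant \ell}(A)$ (and likewise on the $B^{\mathrm{op}}$-side), so $(\diamondsuit)$ already \emph{is} the claimed restricted duality. You only prove the easy inclusion $\mathcal{C}\subseteq\mathcal{W}({_A}T)$; the inclusion $\mathcal{W}({_A}T)\subseteq\mathcal{GP}^{\leqslant \ell}(A)$ (Lemma \ref{prop:3.3}(4)) is what bounds the dimension by $\ell$ and you do not address it.

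For (2), there is a genuine gap in your argument for ${^{\perp}({_A}T)}\cap\mathcal{GP}^{\leqslant \ell}(A)\subseteq\mathcal{C}$. You propose to take a finite $\mathcal{C}$-resolution of $X$ (using $\mathcal{GP}(A)\subseteq\mathcal{C}$) and ``use closure of $\mathcal{C}$ under kernels of epimorphisms and extensions to conclude $X\in\mathcal{C}$.'' That implication does not hold: resolving subcategories are closed under kernels of epimorphisms, not cokernels, so an exact sequence $0\to C_1\to C_0\to X\to 0$ with $C_0,C_1\in\mathcal{C}$ gives no information about $X$ (e.g.\ $\mathcal{C}=\mathcal{P}(A)$ and $X$ of projective dimension $1$). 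The step that actually works — and is what Lemma \ref{lem:3.6}(2)(a) does — requires the duality in an essential way: from a finite $\add({_A}A)$-resolution of $X$ ending in a $\mathcal{GP}(A)$-object, apply $F$ (exactness is provided by $X\in{^{\perp}({_A}T)}$), obtain a finite $\add(T_B)$-coresolution of $F(X)$ ending in an object of $\mathcal{D}$, deduce $F(X)\in\mathcal{D}$ by closure of $\mathcal{D}$ under kernels of epimorphisms read off the coresolution, then apply $G$ and compare the two resolutions termwise to conclude $X\cong GF(X)\in G(\mathcal{D})\subseteq\mathcal{C}$. Your sketch omits this bounce through $F$ and $G$, which is the whole point. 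Finally, the closing appeal to ``Theorem \ref{thm:2.4} in its Gorenstein form'' is circular: that Gorenstein description is precisely what Corollary \ref{cor:3.5}(3) and the present corollary establish, not something to be cited.
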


Remark that if both $A$ and $B^{op}$ have finite finitistic dimension, then ${\mathcal{GP}^{\leqslant n }(A)}$ and ${\mathcal{GP}^{\leqslant m}(B^{op})}$ in Corollary \ref{GV}(2) can be replaced with ${\mathcal{GP}^{<\infty}(A)}$ and ${\mathcal{GP}^{<\infty}(B^{op})}$, respectively. This follows from the fact that the finitistic dimension of an algebra $A$ equals the supremum of Gorenstein-projective dimensions of all those $A$-modules which have finite
Gorenstein-projective dimensions (see \cite[Lemma 4.4]{Xi}).

Finally, we apply the resolving dualities in Corollary \ref{GV} to compare Gorenstein derived categories and homological invariants of algebras linked by tilting bimodules. Similar applications of Miyachita's duality to homotopy categories and algebraic $K$-groups associated with projective modules are given in Corollary \ref{Projective}.

Let $\mathcal{E}$ be an exact category in the sense of Quillen. For $*\in\{\emptyset, +,-,b\}$, the \emph{$*$-derived category} of
$\mathcal{E}$, denoted by $\mathscr{D}^*(\mathcal{E})$, is defined to be the Verdier quotient of $*$-homotopy category $\mathscr{K}^*(\mathcal{E})$ of $\mathcal{E}$ by the full triangulated subcategory of strictly exact complexes (see Section \ref{DCEC}). When $\mathcal{E}$ is a small category (that is, the isomorphism classes of objects of $\mathcal{E}$ is a set), we denote by $K_n(\mathcal{E})$ the \emph{$n$-th algebraic $K$-group} of $\mathcal{E}$ for each $n\in\mathbb{N}$ (see \cite{Quillen}). In particular,  $K_n(\mathcal{GP}(A))$ is called the \emph{$n$-th Gorenstein algebraic $K$-group} of $A$. When $\mathcal{E}$ is a weakly $1$-Gorenstein exact category with finite morphism spaces and finite extension spaces, the \emph{semi-derived Ringel-Hall algebra} of $\mathcal{E}$, denoted by $\mathcal{SDH}(\mathcal{E})$, was defined in \cite{Lu2022} (see also \cite{Bridgeland,Gorsky,LP21} for some cases) and applied to stimulate further interactions between communities on Hall algebras and on quantum symmetric pairs. Given a finite-dimensional algebra $A$ over a finite field, $\mathcal{GP}(A)$ is a weakly $1$-Gorenstein exact category. If $A$ is $1$-Gorenstein, then $A\modcat$ is also weakly $1$-Gorenstein. In this case, $\mathcal{SDH}(A\modcat)$ is called the \emph{semi-derived Ringel-Hall algebra} of $A$ and simply denoted by $\mathcal{SDH}(A)$.

\begin{cor}\label{Cor-1} Let $A$ and $B$ be Artin algebras and $_{A}T_{B}$ a tilting bimodule. Then:
\begin{enumerate}
\item  There is a triangle equivalence $\mathscr{D}(\mathcal{GP}(A))\simeq{\mathscr{D}(\mathcal{GP}(B))}$ which
    can be restricted to an equivalence $\mathscr{D}^{*}(\mathcal{GP}(A))\simeq \mathscr{D}^{*}(\mathcal{GP}(B))$ for any $*\in\{+,-,b\}$.

\item $K_{n}(\mathcal{GP}(A))\simeq{K_{n}(\mathcal{GP}(B))}$ for any $n\in{\mathbb{N}}.$

\item Suppose that $A$ is a finite-dimensional algebra over a finite field and ${_A}T$ has projective dimension at most $1$. Then
$\mathcal{SDH}(\mathcal{GP}(A))\cong \mathcal{SDH}(\mathcal{GP}(B))$ as algebras.
\end{enumerate}
\end{cor}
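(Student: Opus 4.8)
The plan is to promote the tilting bimodule ${}_{A}T_{B}$ to a single covariant exact equivalence of exact categories $\Phi\colon\mathcal{GP}(A)\to\mathcal{GP}(B)$, and then to deduce $(1)$, $(2)$ and $(3)$ by feeding $\Phi$ into, respectively, the functoriality of $*$-derived categories, of Quillen $K$-theory, and of semi-derived Ringel-Hall algebras with respect to exact equivalences. The only non-formal point will be the construction of $\Phi$.

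To construct $\Phi$, note first that $\ell:=\pd({}_{A}T)<\infty$ because ${}_{A}T$ is tilting, so dimension shifting gives $\Ext_{A}^{>0}(X,T)=0$ for every Gorenstein-projective $A$-module $X$; hence $\mathcal{GP}(A)\subseteq{}^{\perp}({}_{A}T)\cap\mathcal{GP}^{\leqslant\ell}(A)$ and, symmetrically, $\mathcal{GP}(B^{op})\subseteq{}^{\perp}(T_{B})\cap\mathcal{GP}^{\leqslant\ell}(B^{op})$. By Corollary \ref{GV}$(1)$, $\Hom_{A}(-,T)$ and $\Hom_{B^{op}}(-,T)$ are inverse resolving dualities between these two categories. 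I would then show that this pair restricts to inverse dualities $\mathcal{GP}(A)\simeq\mathcal{GP}(B^{op})$: writing $\mathcal{GP}(A)=\mathcal{W}({}_{A}A)$ and $\mathcal{GP}(B^{op})=\mathcal{W}(B_{B})$, one checks $\mathcal{W}({}_{A}A)\subseteq\mathcal{W}({}_{A}T)$ and $\mathcal{W}(B_{B})\subseteq\mathcal{W}(T_{B})$ by splicing the (finite, since ${}_{A}T_{B}$ is tilting) $\Hom_{A}(-,T)$-exact $\add(T)$-coresolutions of projective modules into the complete resolutions defining Gorenstein-projective modules, and then that the duality $(\diamondsuit)$, $\mathcal{W}({}_{A}T)\simeq\mathcal{W}(T_{B})$, interchanges $\mathcal{W}({}_{A}A)$ and $\mathcal{W}(B_{B})$ — which is forced by $\Hom_{A}({}_{A}A,T)\cong T_{B}$ and $\Hom_{A}({}_{A}T,T)\cong B_{B}$ together with Theorem \ref{THM}$(2)$. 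This realizes $\Hom_{A}(-,T)$ as a duality of exact categories $\mathcal{GP}(A)\simeq\mathcal{GP}(B^{op})$ (exactness being condition $(3)$ of Definition \ref{definition:2.1}); composing it with the classical duality $(-)^{*}=\Hom(-,B)$ between $\mathcal{GP}(B^{op})$ and $\mathcal{GP}(B)$ yields the desired covariant exact equivalence $\Phi$, with exact quasi-inverse $\Psi$.

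With $\Phi$ in hand, $(1)$ and $(2)$ are immediate. Since $\Phi$ acts degreewise on complexes and, being exact, carries strictly exact complexes to strictly exact complexes (and likewise for $\Psi$), it induces a triangle equivalence $\mathscr{D}^{*}(\mathcal{GP}(A))\simeq\mathscr{D}^{*}(\mathcal{GP}(B))$ for every $*\in\{\emptyset,+,-,b\}$; as $\Phi$ preserves the boundedness conditions these equivalences are compatible with the inclusions among $\mathscr{D}^{b},\mathscr{D}^{+},\mathscr{D}^{-},\mathscr{D}$, which gives $(1)$. For $(2)$, $\mathcal{GP}(A)$ and $\mathcal{GP}(B)$ are essentially small exact categories (full subcategories of module categories of Artin algebras), so $\Phi$ induces isomorphisms $K_{n}(\mathcal{GP}(A))\cong K_{n}(\mathcal{GP}(B))$ for all $n\in\mathbb{N}$ by the functoriality of Quillen $K$-theory on exact categories.

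For $(3)$, observe that $B\cong\End_{A}(T)^{op}$ is again finite-dimensional over the same finite field, so both $\mathcal{GP}(A)$ and $\mathcal{GP}(B)$ are essentially small weakly $1$-Gorenstein exact categories with finite morphism and extension spaces; their semi-derived Ringel-Hall algebras are therefore defined, and the hypothesis $\pd({}_{A}T)\leqslant1$ places us inside the framework of \cite{Lu2022} governing such algebras and their behaviour under exact equivalences. Since $\mathcal{SDH}(\mathcal{E})$ is built from the intrinsic exact-category data of $\mathcal{E}$ — conflations, the cardinalities of the groups $\Hom_{\mathcal{E}}(-,-)$ and $\Ext_{\mathcal{E}}^{1}(-,-)$, and the classes of objects that become invertible in the localization — the exact equivalence $\Phi$ transports all of it, producing an algebra isomorphism $\mathcal{SDH}(\mathcal{GP}(A))\cong\mathcal{SDH}(\mathcal{GP}(B))$. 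The main obstacle is entirely in the construction of $\Phi$: verifying that $\Hom_{A}(-,T)$ really lands in $\mathcal{GP}(B^{op})$ and not merely in the a priori larger ${}^{\perp}(T_{B})\cap\mathcal{GP}^{\leqslant\ell}(B^{op})$, equivalently that $(\diamondsuit)$ restricts to $\mathcal{W}({}_{A}A)\simeq\mathcal{W}(B_{B})$; once that is settled, parts $(1)$–$(3)$ are purely formal, modulo the easy remark that $B$ inherits from $A$ the finiteness hypotheses used in $(2)$ and $(3)$.
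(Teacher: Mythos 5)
There is a genuine gap at the crux of your argument: the restriction of $\Hom_{A}(-,T)$ to a duality $\mathcal{GP}(A)\simeq\mathcal{GP}(B^{op})$ does not exist in general, and the very computation you cite shows why. You observe that $\Hom_{A}({}_{A}A,T)\cong T_{B}$. Since ${}_{A}A\in\mathcal{GP}(A)$, if $\Hom_{A}(-,T)$ restricted to $\mathcal{GP}(A)\to\mathcal{GP}(B^{op})$ we would need $T_{B}\in\mathcal{GP}(B^{op})$. But $T_{B}$ is a tilting $B^{op}$-module, so $\pd(T_{B})<\infty$, and a Gorenstein-projective module of finite projective dimension is already projective. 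Hence $T_{B}\in\mathcal{GP}(B^{op})$ forces $T_{B}$ projective, i.e.\ ${}_{A}T_{B}$ is a progenerator and the ``tilting'' is actually a Morita equivalence. For a genuine tilting bimodule of positive projective dimension, the functor $\Hom_{A}(-,T)$ carries $\mathcal{GP}(A)$ \emph{out of} $\mathcal{GP}(B^{op})$ (into $\mathcal{W}(T_{B})={}^{\perp}(T_{B})\cap\mathcal{GP}^{\leqslant\ell}(B^{op})$, which is strictly larger), so no covariant exact equivalence $\Phi\colon\mathcal{GP}(A)\to\mathcal{GP}(B)$ arises this way. The invocation of Theorem~\ref{THM}(2) does not rescue this: that result concerns dualities of categories of the form $\mathcal{C}\cap\mathcal{GP}^{\leqslant n}(A)$, not of $\mathcal{GP}(A)$ itself, and cannot force a restriction that the object $T_{B}$ visibly violates.

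Because $\Phi$ does not exist, the ``purely formal'' reductions you build on top of it for (1), (2) and (3) are not available. The paper's strategy is necessarily a zig-zag: set $\mathcal{A}_{1}:={}^{\perp}({}_{A}T)\cap\mathcal{GP}^{\leqslant\ell}(A)$ and $\mathcal{A}_{2}:={}^{\perp}(T_{B})\cap\mathcal{GP}^{\leqslant\ell}(B^{op})$, use Corollary~\ref{GV}(1) for the duality $\mathcal{A}_{1}\simeq\mathcal{A}_{2}$, and then relate $\mathcal{GP}(A)$ to $\mathcal{A}_{1}$ (and $\mathcal{GP}(B^{op})$ to $\mathcal{A}_{2}$) \emph{not} by an equivalence, but by an inclusion to which a resolution theorem applies. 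For (1) this is Lemma~\ref{lemma:2.3'}; for (2) it is Lemma~\ref{lemmma:5.1}; for (3) it is Proposition~\ref{prop:4.7}, which is a nontrivial Hall-algebra analogue of the resolution theorem and is where the real work in part (3) lies. Finally the classical duality $\Hom_{B^{op}}(-,B)\colon\mathcal{GP}(B^{op})\to\mathcal{GP}(B)$ converts the contravariant statement into a covariant one. If you want to repair your proposal, replace ``construct a covariant exact equivalence $\Phi\colon\mathcal{GP}(A)\to\mathcal{GP}(B)$'' with ``show the inclusion $\mathcal{GP}(A)\hookrightarrow\mathcal{A}_{1}$ induces an isomorphism on the invariant in question'', and verify the hypotheses of the relevant resolution theorem in each of the three cases.
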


Corollary \ref{Cor-1}(3) refines \cite[Corollary A23]{Lu2022} in which both $A$ and $B$ are required to be $1$-Gorenstein algebras. Moreover, our proof is different from the one given in \cite{Lu2022}.  Combining Corollary \ref{Cor-1}(3) with Proposition \ref{prop:4.7}, we can also provide a new proof of \cite[Theorem A15]{Lu2022} which says that $\mathcal{SDH}(A)\cong\mathcal{SDH}(B)$ as algebras for a tilting bimodule ${_A}T{_B}$ over $1$-Gorenstein algebras $A$ and $B$; see Corollary \ref{corollary:4.9'} and Remark \ref{Different} for more explanation.

\smallskip
The article is organized as follows. In Section 2, we introduce notation, definitions and basic facts needed for proofs.
In particular, we recall derived categories of exact categories, (co)resolutions dimensions, Miyashita's duality and Huisgen-Zimmermann's correspondence. In Section 3, we study homological properties of Wakamatsu tilting modules (see Corollary \ref{cor:3.5}), and show Theorem \ref{THM} and Corollary \ref{GV}. In Section 4, we apply resolving dualities to establish triangle equivalences of derived categories and show the invariance of higher algebraic $K$-groups and semi-derived Ringel-Hall algebras under tilting. This implies Corollary \ref{Cor-1}.

\section{Preliminaries}\label{preliminaries}

In this section we give some definitions in our terminology and collect facts which are used in the paper.

\subsection{Notation and definitions}

Let $A$ be an Artin $R$-algebra, that is, $R$ is a commutative Artin ring and $A$ is an $R$-algebra which is finitely generated as an $R$-module. As usual, $A^{op}$ stands for the opposite algebra of $A$. Denote by $A$-mod the category of finitely generated left $A$-modules.  The kernel, image and cokernel of a homomorphism $f$ in $A$-mod are denoted by ${\rm Ker}(f)$, ${\rm Im}(f)$ and ${\rm Coker}(f)$, respectively.

Let $T$ be an $A$-module. We define
$$^{\perp}({_A}T):=\{M\in A{\text-}{\rm mod}\ |\ \Ext_{A}^i(M,T)=0\ for \ all\ i\geqslant1 \}.$$
Similarly, $({_A}T)^{\perp}$ can be defined.  Denote by add$(_{A}T)$ the full subcategory of $A$-mod consisting of direct summands of finite direct sums of copies of $T$. The $n$-th syzygy of ${_A}T$ is denoted by $\Omega^{n}_{A}(T)$ for each $n\in\mathbb{N}$.

\subsection{Derived categories of exact categories}\label{DCEC}
An exact category $\mathcal{E}$  (in the sense of Quillen) is by definition an additive category endowed with a class of conflations closed under isomorphism and satisfying certain axioms (see \cite{Quillen,keller} for details). When the additive category is abelian, the class of conflations coincides with the class of short exact sequences. An additive functor $F:\mathcal{E}\to \mathcal{E'}$ between exact categories $\mathcal{E}$ and $\mathcal{E'}$ is said to be \emph{exact} if it
sends the conflations in $\mathcal{E}$ to the ones in $\mathcal{E'}$.

Let $\mathcal{E}$ be an exact category and $\mathcal{F}$ a full subcategory of $\mathcal{E}$. If $\mathcal{F}$ is closed under extensions in $\mathcal{E}$, then $\mathcal{F}$, endowed with the conflations in $\mathcal{E}$ having their terms in $\mathcal{F}$ , is an exact category, and the inclusion $\mathcal{F}\subseteq\mathcal{E}$ is
a fully faithful exact functor. In this case, $\mathcal{F}$ is called a \emph{fully exact subcategory} of $\mathcal{E}$ (see \cite[Section 4]{keller}). In an abelian category $\mathcal{A}$ with enough projective objects, resolving subcategories are fully exact subcategories.
Throughout the paper, we always regard resolving subcategories of $\mathcal{A}$ as exact categories.

Let $\A$ be an abelian category and $\mathcal{E}$ a fully exact subcategory of $\A$.  Denote by $\mathscr{C}(\mathcal{E})$ the category of complexes over $\mathcal{E}$.
A complex $X\in{\mathscr{C}(\mathcal{E})}$
is said to be \emph{strictly exact} if it is an exact complex over $\A$ and all of its boundaries belong to $\mathcal{E}$. Let $\mathscr{K}_{ac}(\mathcal{E})$ be the full
subcategory of $\mathscr{K}(\mathcal{E})$ consisting of those complexes which are isomorphic to strictly exact complexes. Then
$\mathscr{K}_{ac}(\mathcal{E})$ is a full triangulated subcategory of $\mathscr{K}_{ac}(\mathcal{E})$ closed under direct summands. The \emph{unbounded derived
category} of $\mathcal{E}$, denoted by $\mathscr{D}(\mathcal{E})$, is defined to be the Verdier quotient of $\mathscr{K}(\mathcal{E})$ by $\mathscr{K}_{ac}(\mathcal{E})$. Similarly, the
bounded-below, bounded-above and bounded derived categories $\mathscr{D}^{+}(\mathcal{E})$, $\mathscr{D}^{-}(\mathcal{E})$ and $\mathscr{D}^{b}(\mathcal{E})$ can be defined.

In Section \ref{K-theory}, we need the following result (see \cite[Section 11]{keller} and \cite[Proposition A.5.6]{Positselski}).

\begin{lem}\label{lemma:2.3'} Let $\mathcal{F}$ and $\mathcal{E}$ be fully exact subcategories of $\A$ with $\mathcal{F} \subseteq \mathcal{E}$. Assume that $\mathcal{E}$ is closed under direct summands in $\A$ and the following two conditions hold:
\begin{enumerate}
\item[(a)] For an exact sequence $0 \to X \to Y \to Z \to 0$ in $\A$ with $X\in{\mathcal{E}}$, if $Y,Z\in{\mathcal{F}}$, then $X\in{\mathcal{F}}$.

\item[(b)] There is a natural number $n$ such that, for each object $E\in{\mathcal{E}}$, there is a long exact sequence in $\A$
$$0\to F_{n}\s{f_{n}}\to\cdots\to F_{1}\s{f_1}\to F_{0}\s{f_0}\to E\to 0$$
\noindent with $F_{i}\in{\mathcal{F}}$ and ${\rm Im}(f_i)\in{\mathcal{E}}$ for all $0 \leqslant i \leqslant n$.
\end{enumerate}

\noindent Then the inclusion $\mathcal{F} \subseteq \mathcal{E}$
induces a triangle equivalence $\mathscr{D}(\mathcal{F})\to \mathscr{D}(\mathcal{E})$ which can be restricted to an equivalence $\mathscr{D}^{*}(\mathcal{F})\to \mathscr{D}^{*}(\mathcal{E})$ for any $*\in\{+,-,b\}$.
\end{lem}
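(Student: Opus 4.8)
The plan is to prove Lemma~\ref{lemma:2.3'} by a dévissage-type argument at the level of homotopy categories, combined with the standard recognition criterion for when a Verdier quotient functor is an equivalence. The key point is that the inclusion $\mathscr{K}^{*}(\mathcal{F})\hookrightarrow\mathscr{K}^{*}(\mathcal{E})$ together with the inclusion of acyclic subcategories induces a functor on the Verdier quotients, and one must verify it is essentially surjective (dense) and fully faithful. I will treat the unbounded case $\mathscr{D}(\mathcal{F})\to\mathscr{D}(\mathcal{E})$ first; the bounded variants follow by the same argument with the obvious truncations.

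\emph{Density.} Given a complex $E^{\bullet}\in\mathscr{C}(\mathcal{E})$, I would use condition (b) to build, degreewise and functorially, a resolution of $E^{\bullet}$ by a complex with terms in $\mathcal{F}$. Concretely, condition (b) provides, for each single object of $\mathcal{E}$, a length-$n$ resolution by objects of $\mathcal{F}$ all of whose images lie in $\mathcal{E}$; applying the horseshoe-type construction over the terms of $E^{\bullet}$ and taking the total complex of the resulting double complex produces a complex $F^{\bullet}\in\mathscr{C}(\mathcal{F})$ and a morphism $F^{\bullet}\to E^{\bullet}$ whose cone is strictly exact over $\A$. The subtlety is that I need the boundaries of this cone to lie in $\mathcal{E}$ so that the cone is zero in $\mathscr{D}(\mathcal{E})$; this is exactly where the hypothesis that each $\mathrm{Im}(f_i)\in\mathcal{E}$ and that $\mathcal{E}$ is closed under direct summands (hence under the finite extensions appearing in the total complex) is used. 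Thus $E^{\bullet}\cong F^{\bullet}$ in $\mathscr{D}(\mathcal{E})$, giving density.

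\emph{Full faithfulness.} For this I would invoke the general criterion: the quotient functor $\mathscr{D}(\mathcal{F})\to\mathscr{D}(\mathcal{E})$ is fully faithful provided every strictly exact complex over $\mathcal{E}$ that is quasi-isomorphic to a complex with terms in $\mathcal{F}$ is already, via a roof in $\mathscr{K}(\mathcal{F})$, identified with the corresponding acyclic object; equivalently, one checks that a morphism $F_1^{\bullet}\to E^{\bullet}$ with $F_1^{\bullet}\in\mathscr{C}(\mathcal{F})$ and cone strictly exact over $\mathcal{E}$ can be refined to $F_2^{\bullet}\to F_1^{\bullet}$ with $F_2^{\bullet}\in\mathscr{C}(\mathcal{F})$ and cone strictly exact over $\mathcal{F}$. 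Here condition (a) is the crucial input: it guarantees that when one resolves a strictly $\mathcal{E}$-exact complex by $\mathcal{F}$-objects as above, the boundaries of the resulting complex actually lie in $\mathcal{F}$ (not merely $\mathcal{E}$), because in the exact sequences $0\to X\to Y\to Z\to 0$ arising from the resolution the outer terms are in $\mathcal{F}$ and $X\in\mathcal{E}$, forcing $X\in\mathcal{F}$. Combining the resolution construction from the density step with (a) yields the required calculus of fractions, so the functor is fully faithful.

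\emph{The bounded cases and the main obstacle.} The restriction to $\mathscr{D}^{*}(\mathcal{F})\to\mathscr{D}^{*}(\mathcal{E})$ for $*\in\{+,-,b\}$ is immediate once the resolution constructed above is seen to preserve boundedness: condition (b) replaces each term of a complex by a \emph{bounded} (length $\leqslant n$) complex of $\mathcal{F}$-objects, so the total complex lies in $\mathscr{C}^{*}(\mathcal{F})$ whenever $E^{\bullet}\in\mathscr{C}^{*}(\mathcal{E})$, and the same roofs work. The main obstacle I anticipate is the bookkeeping in the density step: making the degreewise resolutions of (b) assemble into an honest complex (i.e.\ ensuring the differentials of the total complex square to zero and that the comparison map is a chain map) requires either a functorial choice in (b) or an inductive patching argument, and one must carefully track that all the intermediate images stay inside $\mathcal{E}$ throughout the horseshoe construction. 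Everything else is a formal application of the $2$-out-of-$3$ and octahedral axioms together with the localization criterion; I would cite \cite[Section 11]{keller} and \cite[Proposition A.5.6]{Positselski} for the technical core and present the argument above as the verification of their hypotheses in this setting.
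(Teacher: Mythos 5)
The paper does not prove this lemma; it simply states it with the citations \cite[Section 11]{keller} and \cite[Proposition A.5.6]{Positselski}, which is exactly what you do at the end of your proposal. So your high-level approach is aligned with the paper's, and the conclusion is correct. But the sketch you insert before the citation has a genuine gap in the density step. You propose to resolve $E^{\bullet}$ degreewise using condition (b) and then assemble the resulting resolutions via a ``horseshoe-type construction'' into a bicomplex whose total complex lives over $\mathcal{F}$. The horseshoe lemma requires a lifting property of the resolving objects---projectivity, or at least vanishing of $\Ext^1(\mathcal{F},-)$ on $\mathcal{E}$---to produce the comparison maps between resolutions of adjacent degrees, and nothing in conditions (a) or (b) supplies such a property. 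Condition (b) is a bare existence statement for each single object, with no functoriality and no splitting, so the individual resolutions of $E^{i}$ and $E^{i+1}$ cannot in general be glued into a bicomplex this way. You do flag this as ``the main obstacle,'' but the ``inductive patching argument'' you gesture at is in fact the entire technical content of the cited proofs: one works through stupid truncations and iterated mapping cones, replacing one degree at a time, and handles the unbounded case by a separate tower or colimit argument---not by a horseshoe.

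There is a second, smaller subtlety in your full-faithfulness step. You argue that condition (a) forces the boundaries of a strictly $\mathcal{E}$-exact complex with terms in $\mathcal{F}$ to lie in $\mathcal{F}$. That is correct for bounded-above complexes, by downward induction with the anchor $B^{i+1}=0$ for $i\gg 0$; for genuinely unbounded complexes this base case is absent and additional input is needed. Neither issue affects the truth of the lemma---the cited references do establish it---but your sketch, as written, is not a self-contained verification of their hypotheses and the horseshoe step in particular would need to be replaced by the truncation argument.
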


\subsection{(semi-)Gorenstein-projective modules and $\mathcal{X}$-(co)resolutions}
A complete projective resolution over $A$ is by definition a (double infinite) exact complex
$P^{\bullet}: \cdots \to P^{-1}\s{d_P^{-1}}\to P^{0}\s{d_P^{0}}\to P^{1}\to\cdots$
of projective $A$-modules such that the complex $\Hom_{A}(P^{\bullet},A)$, obtained by applying $\Hom_A(-,A)$ to $P^{\bullet}$, is again exact.

\begin{definition} \emph{\cite{EJ1995,EJ}} An $A$-module $M$ is said to be \emph{Gorenstein-projective} if there is a complete projective resolution $P^{\bullet}$ over $A$ such that $M$ is isomorphic to the image of $d_P^{-1}: P^{-1}\to P^0$.
\end{definition}

Gorenstein-projective modules were called modules of G-dimension zero in \cite{AB1969} or totally reflexive modules in \cite[Section 2]{AM2002}.
A generalization of Gorenstein-projective modules is the following.

\begin{definition}\cite{RZ1} An $A$-module $M$ is said to be \emph{semi-Gorenstein-projective} provided that $\Ext_{A}^{i}(M,A)=0$ for all $i\geqslant1$.
\end{definition}

Denote by $\mathcal{GP}(A)$ and $\mathcal{SGP}(A)$ the categories of Gorenstein-projective and semi-Gorenstein-projective $A$-modules, respectively.
Then $\mathcal{GP}(A)$ and $\mathcal{SGP}(A)$ are resolving subcategories of $A\modcat$. Moreover,
$\mathcal{GP}(A)\subseteq \mathcal{SGP}(A)$, but the converse of the inclusion is not true in general (see \cite{RZ1} for examples).

Let $\mathcal{X}$ be a full subcategory of $A$-mod. An \emph{$\mathcal{X}$-resolution} of an $A$-module $M$ is by definition
a long exact sequence of $A$-modules  $\cdots\to X_2\to X_{1}\to X_{0}\to M\to 0$
with $X_{i}\in{\mathcal{X}}$ for all $i$. The length of the resolution is defined to be the supremum of $i$ such that  $X_i\neq0$.
Now, the \emph{$\mathcal{X}$-resolution dimension} of $M$, denote by res.dim$_{\mathcal{X}}(M)$,
is defined to be the minimal natural number $n$ such that $M$ has an $\mathcal{X}$-resolution
of length $n$, or $\infty$ if no such $n$ exists.
The concept of $\mathcal{X}$-coresolutions and  $\mathcal{X}$-coresolution dimension of $M$ can be defined dually.

The following result is known in the literature.

\begin{lem}\label{lemma:2.1} Let $\mathcal{X}$ be a resolving subcategory of $A$-{\rm mod} and $M$ an $A$-module. The following are equivalent for a natural number $n$:
\begin{enumerate}
\item {\rm res.dim}$_{\mathcal{X}}(_{A}M)\leqslant n$.
\item $\Omega^n_{A}(M)\in{\mathcal{X}}$.
\item For any $\mathcal{X}$-resolution $\cdots\to X_2\to X_{1}\to X_{0}\to M\to 0$ of $M$, ${\rm Ker}(X_{n-1}\to X_{n-2})\in \mathcal{X}$.
\end{enumerate}
\end{lem}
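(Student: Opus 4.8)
The plan is to prove the cycle of implications $(3)\Rightarrow(1)\Rightarrow(2)\Rightarrow(3)$, using that $\mathcal{X}$ is resolving — in particular closed under kernels of epimorphisms and containing all projectives — together with a dimension-shifting (horseshoe-type) argument. First I would dispose of $(3)\Rightarrow(1)$: since $\mathcal{X}$ contains all projective modules and $A$-mod has enough projectives, every module $M$ admits at least one $\mathcal{X}$-resolution, namely a projective resolution; applying the hypothesis of $(3)$ to this particular resolution yields that the $n$-th syzygy (a kernel appearing at stage $n$) lies in $\mathcal{X}$, hence $M$ has an $\mathcal{X}$-resolution of length $\leqslant n$, which is exactly $(1)$.

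Next, $(1)\Rightarrow(2)$. Suppose $M$ has an $\mathcal{X}$-resolution $0\to X_n\to X_{n-1}\to\cdots\to X_0\to M\to 0$ of length $\leqslant n$, and fix in parallel a projective resolution $\cdots\to P_1\to P_0\to M\to 0$; its $n$-th syzygy is $\Omega^n_A(M)$ (well-defined up to projective summands, which lie in $\mathcal{X}$ anyway). The standard generalized Schanuel / comparison argument — lift the identity on $M$ to a chain map between the two resolutions, form the mapping cone, and truncate — shows that $\Omega^n_A(M)$ is obtained from $X_n$ by repeatedly taking kernels of epimorphisms whose other terms are built from the $X_i$'s and projectives, all of which are in $\mathcal{X}$; since $\mathcal{X}$ is closed under kernels of epimorphisms and finite direct sums, we conclude $\Omega^n_A(M)\in\mathcal{X}$. (Alternatively one inducts on $n$: the case $n=0$ is the observation that $\mathcal{X}$ is closed under direct summands, and the inductive step compares the first syzygies of $M$ computed from the two resolutions, which differ by projectives, reducing to $\operatorname{res.dim}_{\mathcal{X}}(\Omega^1_A M)\leqslant n-1$.)

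Finally, $(2)\Rightarrow(3)$. Let $\cdots\to X_1\to X_0\to M\to 0$ be an arbitrary $\mathcal{X}$-resolution and set $K_i:=\operatorname{Ker}(X_{i-1}\to X_{i-2})$ (with $K_0:=M$), so that $K_n=\operatorname{Ker}(X_{n-1}\to X_{n-2})$ is the module in question. By $(2)$ we know $\Omega^n_A(M)\in\mathcal{X}$. Now the truncated sequences $0\to K_1\to X_0\to M\to 0$, $0\to K_2\to X_1\to K_1\to 0$, \dots express $K_n$ as an iterated syzygy of $M$ relative to the chosen resolution; comparing with a projective resolution as above (again via Schanuel's lemma applied stepwise, using that the $X_i$ are in particular some class of modules and that projectives lie in $\mathcal{X}$) gives that $K_n$ and $\Omega^n_A(M)$ differ by modules in $\mathcal{X}$ — more precisely $K_n\oplus P\cong \Omega^n_A(M)\oplus Q$ (or a short exact sequence relating them with $\mathcal{X}$-terms) for suitable projectives $P,Q$ — whence $K_n\in\mathcal{X}$ because $\mathcal{X}$ is closed under direct summands and extensions. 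This closes the loop.

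The main obstacle is the bookkeeping in the Schanuel-type comparison between an arbitrary $\mathcal{X}$-resolution and the projective resolution: one must be careful that at each stage the auxiliary modules produced (cokernels, direct sums with projectives) genuinely stay inside $\mathcal{X}$ and that the closure properties invoked — kernels of epimorphisms, extensions, direct summands, projectives — are precisely those guaranteed by ``resolving''. Once the $n=1$ comparison (first syzygies agree up to projective summands) is set up cleanly, the general case follows by a routine induction on $n$, so I would organize the write-up around that single inductive step rather than a global mapping-cone computation.
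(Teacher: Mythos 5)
The paper states this lemma without proof, attributing it to the literature, so there is no in-text argument to compare against; I therefore assess your proposal on its own terms. Your cyclic scheme $(3)\Rightarrow(1)\Rightarrow(2)\Rightarrow(3)$ is a reasonable organization and the first implication is clean. The weak point is the Schanuel-type comparison: you assert $K_n\oplus P\cong\Omega_A^n(M)\oplus Q$ for suitable projectives $P,Q$, but that isomorphism is generally false when one of the two resolutions being compared has non-projective terms, since Schanuel's lemma requires \emph{both} middle modules to be projective. Comparing $0\to K_1\to X_0\to M\to0$ with $0\to\Omega_A^1(M)\to P_0\to M\to0$, the pullback $E$ of $X_0\to M\leftarrow P_0$ yields $E\cong K_1\oplus P_0$ (because $P_0$ is projective) together with an exact sequence $0\to\Omega_A^1(M)\to K_1\oplus P_0\to X_0\to 0$ that need not split; taking $A=k[x]/(x^2)$, $M=k$, $X_0=k$ and $\mathcal{X}=A\modcat$ gives $K_1=0$ while $\Omega_A^1(M)\cong k$, so no projectives $P,Q$ can realize the claimed isomorphism. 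Your parenthetical hedge --- a short exact sequence with $\mathcal{X}$-terms --- is what actually holds, and the induction must be run through that sequence (pushed down to syzygies by the horseshoe lemma) rather than through a direct-sum identification.

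A second, smaller issue: you invoke closure of $\mathcal{X}$ under direct summands, but Definition \ref{definition:2.1}(1) of this paper demands only closure under isomorphisms, extensions and kernels of epimorphisms, together with containing the projectives; direct-summand closure is not among the listed axioms. Fortunately, at each point where you want to pass from $K\oplus P\in\mathcal{X}$ to $K\in\mathcal{X}$ the complementary summand $P$ is projective, hence lies in $\mathcal{X}$, so the projection $K\oplus P\twoheadrightarrow P$ is an epimorphism between objects of $\mathcal{X}$ whose kernel is $K$, and closure under kernels of epimorphisms finishes the step. With these two repairs --- replacing the spurious direct-sum isomorphism by the exact sequence above and replacing the direct-summand appeal by the projection trick --- your inductive plan is sound and does prove the lemma.
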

Associated with $\mathcal{X}$, there are
$$\mathcal{X}^{\leqslant n}(A):=\{M\in{A\textrm{-mod}} \ | \ \textrm{res.dim}_{\mathcal{X}}(_{A}M)\leqslant n\},$$
$$\mathcal{X}^{<\infty}(A):=\{M\in{A\textrm{-mod}} \ | \ \textrm{res.dim}_{\mathcal{X}}(_{A}M)<\infty\}.$$
Clearly, if $\mathcal{X}$ is a resolving subcategory of $A$-{\rm mod}, then so are $\mathcal{X}^{\leqslant n}(A)$ and $\mathcal{X}^{<\infty}(A)$.
Denote by $\mathcal{P}(A)$  the category of projective $A$-modules.  For simplicity, we set
$$\mathcal{P}^{\leqslant n}(A):=\mathcal{P}(A)^{\leqslant n}(A),\quad \mathcal{P}^{<\infty}(A):=\mathcal{P}(A)^{<\infty}(A),$$
$$\mathcal{GP}^{\leqslant n}(A):=\mathcal{GP}(A)^{\leqslant n}(A),\quad \mathcal{GP}^{<\infty}(A):=\mathcal{GP}(A)^{<\infty}(A),$$
$$\mathcal{SGP}^{\leqslant n}(A):=\mathcal{SGP}(A)^{\leqslant n}(A),\quad \mathcal{SGP}^{<\infty}(A):=\mathcal{SGP}(A)^{<\infty}(A).$$

%

\subsection{Resolving dualities induced by tilting modules}
An $A$-module $T$ is called an \emph{$n$-tilting} module (see \cite{Brenner,Happel-Ringel,Happel1988,Miyashita}) if the following conditions are satisfied:
\begin{enumerate}
\item[(T1)]
 $T\in{\mathcal{P}^{\leqslant n}(A)}$, that is,
the projective dimension of $_{A}T$, denoted by proj.dim$(_{A}T)$, is at most $n$.

\item[(T2)] $\Ext_{A}^{j}(T,T)=0$ for all $j\geqslant1$.

\item[(T3)] There exists an exact sequence of $A$-modules
$0\to {_{A}A}\to T_{0}\to\cdots\to T_{n}\to 0$
with $T_{i}\in{\textrm{add}(_{A}T)}$ for all $0\leqslant i\leqslant n$.
\end{enumerate}

A module $_{A}T$ is said to be \emph{tilting} if it is $n$-tilting for some natural number $n$. If, in addition,
$\mathcal{P}^{<\infty}(A)\subseteq{^{\perp}(_{A}T)}$, then $_{A}T$ is said to be \emph{strong tilting} (see \cite{AR1991}).

Given a tilting module $_{A}T$ with $B:=\textrm{End}_{A}(T)^{op}$, we see that $T_{B}$ is
also a tilting module and $A$ is isomorphic to $\textrm{End}_{B^{op}}(T)$ as algebras.

The following theorem is readily deduced from \cite[Theorem 3.5]{Miyashita}.

\begin{thm}{\rm (Miyashita's duality)}\label{thm:2.3} For a tilting bimodule $_{A}T_{B}$, let
\vspace{2mm}
\begin{center}{$\mathcal{C}:={^{\bot}(_{A}T)}\cap{\mathcal{P}^{<\infty}(A)}$ and $\mathcal{D}:={^{\bot}(T_{B})}\cap{\mathcal{P}^{<\infty}(B^{op})}$.}
\end{center}
\vspace{2mm}
Then the restricted Hom-functors $\Hom_{A}(-,T)|_{\mathcal{C}}:\mathcal{C}\to\mathcal{D}$ and $\Hom_{B^{op}}(-,T)|_{\mathcal{D}}:\mathcal{D}\to\mathcal{C}$
are inverse resolving dualities. Further, if $_{A}T_{B}$ is {strong tilting}, then there are inverse resolving dualities
$\mathcal{P}^{<\infty}(A)\simeq{\mathcal{P}^{<\infty}(B^{op})}$.
\end{thm}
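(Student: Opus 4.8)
The plan is to run Miyashita's reflexivity argument as an induction on projective dimension; the statement is in fact also immediate from \cite[Theorem 3.5]{Miyashita} once one checks that the categories $\mathcal{C}$ and $\mathcal{D}$ above coincide with the ones appearing there, but reproving it directly seems cleaner. First I would dispose of condition (1) of Definition \ref{definition:2.1}: the subcategory ${}^{\perp}(_{A}T)$ is resolving in $A\modcat$ (it contains the projectives since $\Ext_A^i(P,T)=0$ for $i\geqslant1$, and the long exact sequence of $\Ext_A(-,T)$ gives closure under extensions, kernels of epimorphisms and direct summands), and since $\mathcal{P}^{<\infty}(A)$ is resolving and an intersection of resolving subcategories is resolving, $\mathcal{C}$ is resolving; likewise $\mathcal{D}$, and module categories have enough projectives.

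The core step is to prove, by induction on $d=\pd({}_{A}M)$ for $M\in\mathcal{C}$, the conjunction of three statements: $\Hom_A(M,T)\in\mathcal{D}$; $\Ext^i_{B^{op}}(\Hom_A(M,T),T)=0$ for all $i\geqslant1$; and the evaluation morphism $\delta_M\colon M\to\Hom_{B^{op}}(\Hom_A(M,T),T)$ is an isomorphism. The base case $d=0$ is clear: $M\in\add({}_{A}A)$, so $\Hom_A(M,T)\in\add(T_B)$, which lies in $\mathcal{D}$ by (T1) and (T2) for the tilting module $T_B$ and has no higher self-$\Ext$ into $T$, while $\delta_M$ is an isomorphism because $\delta_A$ is the canonical identification $A\cong\End_{B^{op}}(T)$ afforded by faithful balance and the double-dual functor is additive. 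For $d\geqslant1$ I would choose $0\to K\to P\to M\to0$ with $P$ projective, note $K\in\mathcal{C}$ (as $\mathcal{C}$ is resolving) with $\pd({}_{A}K)=d-1$, apply $\Hom_A(-,T)$ — which is exact on this sequence since $\Ext^1_A(M,T)=0$ — then apply $\Hom_{B^{op}}(-,T)$, and read off all three claims from the resulting long exact sequences, using the inductive hypothesis for $K$ and the projectivity of $P$; the bijectivity of $\delta_M$ comes via the five lemma applied to the commutative ladder linking $0\to K\to P\to M\to0$ with $0\to GFK\to GFP\to GFM\to0$ (where $F=\Hom_A(-,T)$, $G=\Hom_{B^{op}}(-,T)$), the latter being short exact by the vanishing for $K$. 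Running the same argument for the tilting bimodule ${}_{B^{op}}T_{A^{op}}$ (note $\End_{B^{op}}(T)\cong A$) shows $G$ maps $\mathcal{D}$ into $\mathcal{C}$ with the symmetric evaluation a natural isomorphism, so $G\circ F\cong\id_{\mathcal{C}}$ and $F\circ G\cong\id_{\mathcal{D}}$, which is condition (2).

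Condition (3) I would then read off directly: a conflation in $\mathcal{C}$ is a short exact sequence $0\to X'\to X\to X''\to0$ in $A\modcat$ with all terms in $\mathcal{C}$, and since $X''\in{}^{\perp}(_{A}T)$ we have $\Ext_A^1(X'',T)=0$, so $\Hom_A(-,T)$ sends it to a short exact sequence with terms in $\mathcal{D}$, i.e.\ a conflation in $\mathcal{D}$; symmetrically for $G$. Hence $F$ and $G$ are exact and are inverse resolving dualities. For the final clause, strong tilting of ${}_{A}T_{B}$ gives $\mathcal{P}^{<\infty}(A)\subseteq{}^{\perp}(_{A}T)$, hence $\mathcal{C}=\mathcal{P}^{<\infty}(A)$; using that $T_B$ is then strong tilting as well (the strong tilting property of a tilting bimodule being left--right symmetric; see \cite{Miyashita,AR1991}) one gets $\mathcal{D}=\mathcal{P}^{<\infty}(B^{op})$, and the dualities restrict to $\mathcal{P}^{<\infty}(A)\simeq\mathcal{P}^{<\infty}(B^{op})$.

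The main obstacle I anticipate is the bookkeeping in the induction: the three assertions (membership in $\mathcal{D}$, the $\Ext$-vanishing for $\Hom_A(M,T)$, and bijectivity of $\delta_M$) must be carried together so that the inductive step can invoke each of them for the syzygy $K$; beyond that one needs the non-formal input that strong tilting transfers from ${}_{A}T$ to $T_B$. Everything else is routine chasing of long exact sequences.
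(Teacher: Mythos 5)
Your proof is correct, and it takes a genuinely different route from the paper: the paper does not prove Theorem~\ref{thm:2.3} at all but simply records that it is ``readily deduced from \cite[Theorem 3.5]{Miyashita}'', whereas you give a self-contained proof that reconstructs the essential content of Miyashita's argument. The induction on $\pd({}_{A}M)$ carrying the three assertions (membership in $\mathcal{D}$, the $\Ext$-vanishing $\Ext^{i}_{B^{op}}(\Hom_A(M,T),T)=0$, and bijectivity of $\delta_M$) simultaneously is exactly the right bookkeeping: the base case is faithful balance plus additivity, and in the inductive step the vanishing for the syzygy $K$ is precisely what makes the bottom row $0\to GFK\to GFP\to GFM\to 0$ exact, so the five lemma applies. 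Checking strict exactness of $F$ and $G$ via $\Ext^1_A(X'',T)=0$ on the cokernel of a conflation is also correct and matches how the paper's Definition~\ref{definition:2.1}(3) is meant to be verified. Your approach has the advantage of making the result verifiable without unwinding Miyashita's (somewhat differently phrased) Theorem~3.5; the paper's approach buys brevity at the cost of leaving the translation between the two formulations to the reader. You are also right to flag the one non-formal input for the final clause, namely that strong tilting of ${}_{A}T$ forces $T_B$ to be strong tilting as well — this left--right symmetry is not a formality and does need to be imported from \cite{Miyashita} or \cite{AR1991}, exactly as you note; an alternative in-paper route to $\mathcal{C}=\mathcal{P}^{<\infty}(A)\Rightarrow\mathcal{D}=\mathcal{P}^{<\infty}(B^{op})$ would be to observe that the duality you have just constructed identifies $\mathcal{D}$ with the modules admitting finite $\add({}_{A}T)$-coresolutions and then apply \cite{AR1991}, but in either case some external input is required.
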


Recently, Huisgen-Zimmermann has supplemented Miyashita's duality by showing that resolving dualities between subcategories of $\mathcal{P}^{<\infty}(A)$ and $\mathcal{P}^{<\infty}(B^{op})$ are always afforded by tilting bimodules (see \cite[Theorem 1]{Huisgen}).

\begin{thm}{\rm (Huisgen-Zimmermann's correspondence)} \label{thm:2.4}Let $\mathcal{C} \subseteq \mathcal{P}^{<\infty}(A)$ and $\mathcal{D} \subseteq \mathcal{P}^{<\infty}(B^{op})$ be resolving
subcategories of $A\textrm{-}{\rm mod}$ and $B^{op}\modcat$, respectively.
If $F:\mathcal{C}\to\mathcal{D}$ and $G:\mathcal{D}\to\mathcal{C}$ are inverse resolving dualities, then there exists a tilting bimodule $_{A}T_{B}$ such that
\begin{enumerate}
\item[(a)] $F\cong\Hom_{A}(-,T)|_{\mathcal{C}}$ and  $G\cong\Hom_{B^{op}}(-,T)|_{\mathcal{D}}$,

\item[(b)] $\mathcal{C}={^{\bot}(_{A}T)\cap{\mathcal{P}^{<\infty}(A)}}$ and  $\mathcal{D}={^{\bot}(T_{B})}\cap{\mathcal{P}^{<\infty}(B^{op})}$, and

\item[(c)] $\mathcal{C}$ consists of those modules $M$ which have finite ${\rm add}(_{A}T)$-coresolutions and $\mathcal{D}$ consists of those modules $N$ which have finite  ${\rm add}(T_{B})$-coresolutions.
\end{enumerate}
\end{thm}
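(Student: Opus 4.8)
The plan is to read the bimodule off from the functors, identify $F$ and $G$ with the corresponding Hom-functors by a Morita-type argument, and then deduce (a)--(c) together with the tilting axioms (T1)--(T3). Since $\mathcal{C}$ and $\mathcal{D}$ are resolving they contain $_AA$ and $B_B$, so I would set $T:=F(_{A}A)\in\mathcal{D}$ and $S:=G(B_B)\in\mathcal{C}$. The contravariant equivalence $F$ induces a ring anti-isomorphism $\End_A(_{A}A)\to\End_{B^{op}}(T)$; composing it with the canonical anti-isomorphism $A\cong\End_A(_{A}A)$ equips $T$ with a left $A$-action, so that ${_A}T_B$ becomes a bimodule, and symmetrically $S$ becomes an $A$-$B$-bimodule. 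For $Y\in\mathcal{C}$, the bijection $\Hom_A(_{A}A,Y)\to\Hom_{B^{op}}(FY,T)$, $f\mapsto F(f)$ (bijective because $F$ is fully faithful) together with the evaluation isomorphism $\Hom_A(_{A}A,Y)\cong Y$ yields a natural isomorphism $Y\cong\Hom_{B^{op}}(FY,T)$, and this is $A$-linear precisely by the choice of the $A$-action on $T$. Hence $\mathrm{id}_{\mathcal{C}}\cong\Hom_{B^{op}}(-,T)\circ F$; as $G$ is a quasi-inverse of $F$ this forces $G\cong\Hom_{B^{op}}(-,T)|_{\mathcal{D}}$, and the symmetric argument gives $F\cong\Hom_A(-,T)|_{\mathcal{C}}$ together with $S\cong{_A}T$ and the faithfully balanced property of ${_A}T_B$. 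This proves (a), and in particular $F$ and $\Hom_A(-,T)$ agree on projectives, so $F$ carries projective modules into $\add(T_B)$.

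Next I would verify that ${_A}T_B$ is a tilting bimodule. For $M\in\mathcal{C}\subseteq\mathcal{P}^{<\infty}(A)$, a finite projective resolution $0\to P_d\to\cdots\to P_0\to M\to 0$ is a splice of conflations in $\mathcal{C}$, since all syzygies of $M$ lie in the resolving subcategory $\mathcal{C}$. Applying the exact functor $F$ and using the identification above yields an exact sequence $0\to\Hom_A(M,T)\to\Hom_A(P_0,T)\to\cdots\to\Hom_A(P_d,T)\to 0$, whose cohomology computes $\Ext^{\geqslant 1}_A(M,T)$; hence $\Ext^{\geqslant 1}_A(M,T)=0$, i.e. $\mathcal{C}\subseteq{}^{\perp}(_{A}T)$. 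In particular (T2) holds because $T\in\mathcal{C}$, while (T1) is immediate since $T=S\in\mathcal{C}\subseteq\mathcal{P}^{<\infty}(A)$. For (T3), resolve $T_B=F(_{A}A)\in\mathcal{D}\subseteq\mathcal{P}^{<\infty}(B^{op})$ projectively and apply the exact functor $G$: this produces a finite $\add(_{A}T)$-coresolution of $G(T_B)\cong{_A}A$. Thus ${_A}T_B$ is a tilting bimodule, and symmetrically $\mathcal{D}\subseteq{}^{\perp}(T_B)$.

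It remains to prove the equalities in (b) and (c). The inclusions $\mathcal{C}\subseteq{}^{\perp}(_{A}T)\cap\mathcal{P}^{<\infty}(A)$ and $\mathcal{D}\subseteq{}^{\perp}(T_B)\cap\mathcal{P}^{<\infty}(B^{op})$ are established above. For the converse inclusion, let $N\in{}^{\perp}(_{A}T)\cap\mathcal{P}^{<\infty}(A)$ and choose a finite projective resolution $0\to P_d\to\cdots\to P_0\to N\to 0$. Since $N\in{}^{\perp}(_{A}T)$, applying $\Hom_A(-,T)$ keeps the sequence exact, so $Y:=\Hom_A(N,T)$ fits into an exact sequence $0\to Y\to\Hom_A(P_0,T)\to\cdots\to\Hom_A(P_d,T)\to 0$ with each $\Hom_A(P_i,T)\in\add(T_B)\subseteq\mathcal{D}$; peeling this sequence off from the right end and repeatedly using that $\mathcal{D}$ is closed under kernels of epimorphisms gives $Y\in\mathcal{D}$. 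Applying the exact functor $G\cong\Hom_{B^{op}}(-,T)$ to this splice of conflations in $\mathcal{D}$ and comparing with the original resolution through $G(\Hom_A(P_i,T))=GF(P_i)\cong P_i$ yields $G(Y)\cong N$, so $N\in\mathcal{C}$; this proves (b). For (c), if $M\in\mathcal{C}$ then resolving $F(M)=\Hom_A(M,T)\in\mathcal{D}$ projectively and applying $G$ produces a finite $\add(_{A}T)$-coresolution of $M$; conversely, any module with a finite $\add(_{A}T)$-coresolution has finite projective dimension (because $\pd(_{A}T)<\infty$) and lies in ${}^{\perp}(_{A}T)$ by dimension shifting along the coresolution (using $\add(T)\subseteq{}^{\perp}(_{A}T)$), hence belongs to $\mathcal{C}$ by (b). The statements for $\mathcal{D}$ follow symmetrically.

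The main obstacles are twofold. The first is the bookkeeping with the bimodule structure in the opening paragraph: one must check that the Morita-type isomorphisms are genuinely one-sided linear and not merely additive, which is routine but unforgiving. The second, and more essential, is the implication ``$Y$ has a finite $\add(T_B)$-coresolution $\Rightarrow Y\in\mathcal{D}$'' used above; this is exactly where the hypothesis that $\mathcal{D}$ is \emph{resolving} --- closed under kernels of epimorphisms, not merely under extensions --- is genuinely needed, and it is what forces $\mathcal{C}$ and $\mathcal{D}$ to be \emph{maximal} among resolving subcategories carrying the given duality. Throughout, the assumption that $F$ and $G$ are \emph{exact} (strictly exact) rather than merely additive is indispensable: it is precisely what transports finite (co)resolutions across the duality and thereby delivers the $\Ext$-vanishing and the tilting axioms.
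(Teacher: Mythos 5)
Your argument is correct. The paper itself does not give a direct proof of this statement — it cites it from Huisgen-Zimmermann's work and, in Remark~\ref{generalization}, notes that parts (a) and (b) can be recovered from the general Theorem~\ref{THM} together with Lemma~\ref{lem:3.6}. Your proof is a self-contained direct specialization to the $\mathcal{P}^{<\infty}$ setting, and it is essentially the same circle of ideas (Morita-type identification of the functors with $\Hom(-,T)$, then exactness of $F$ and $G$ applied to finite resolutions), but the packaging differs in a way worth noting. The paper's route passes through Wakamatsu tilting: Theorem~\ref{THM}(1) first shows ${}_AT{}_B$ is Wakamatsu tilting for any resolving duality, and then Lemma~\ref{lem:3.6}(2)(c) has to work fairly hard to extract finite projective dimension of $T$ under restriction hypotheses that also cover the (semi-)Gorenstein-projective cases; in contrast, your hypothesis $\mathcal{C}\subseteq\mathcal{P}^{<\infty}(A)$ makes (T1) trivial and makes (T3) a one-line application of $G$ to a finite projective resolution of $T_B$, so the tilting axioms drop out immediately with none of the syzygy gymnastics of Lemma~\ref{lem:3.6}(2)(c). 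Your direct argument also buys part (c), which Remark~\ref{generalization} does not claim to derive. One small presentational point: for the faithfully balanced property and the identification $F\cong\Hom_A(-,T)|_{\mathcal{C}}$, $G\cong\Hom_{B^{op}}(-,T)|_{\mathcal{D}}$, the paper simply cites \cite[Theorem~23.5]{Anderson}, and it would be cleaner to do the same rather than redo the bimodule bookkeeping from scratch; you correctly flag this as the delicate part, and invoking the reference eliminates the risk. Everything else — the peeling argument for $Y\in\mathcal{D}$ using closure under kernels of epimorphisms, the five-lemma comparison giving $N\cong G(Y)$, and the dimension-shift showing modules with finite $\add({}_AT)$-coresolutions lie in ${}^{\perp}({}_AT)\cap\mathcal{P}^{<\infty}(A)$ — is sound.
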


By Theorem \ref{thm:2.4}(c), we have ${^{\bot}(_{A}T)\cap{\mathcal{P}^{<\infty}(A)}}={^{\bot}(_{A}T)}\cap\mathcal{P}^{\leqslant \ell}(A)$ and ${^{\perp}(T_{B})}\cap{\mathcal{P}^{<\infty}(B^{op})}={^{\perp}(T_{B})}\cap{\mathcal{P}^{\leqslant \ell}(B^{op})}$, where
$\ell$ denotes the projective dimension of ${_A}T$.

\section{Correspondence between Wakamatsu tilting modules and resolving dualities}\label{proof}
In the section, we discuss resolving subcategories of module categories related to Wakamatsu tilting modules
and establish relationships between resolving dualities and Wakamatsu tilting modules.
In case that these dualities can be restricted to resolving subcategories of modules with finite projective, Gorenstein-projective or semi-Gorenstein-projective dimension, the associated Wakamatsu tilting modules are shown to be tilting. In particular, we show Theorem \ref{THM} and Corollary \ref{GV}.

\subsection{Basic facts on Wakamatsu tilting modules}
Let $T$ be an $A$-module. We denote by cogen$^{*}(_{A}T)$  the full subcategory of $A$-mod consisting of modules $M$
which admits an exact sequence of $A$-modules
$0\to M\to T_{0}\to T_{1}\to T_2\to \cdots$ with $T_{i}\in{\textrm{add}(_{A}T)}$ for all $i\geqslant 0$ such that applying the functor $\Hom_{A}(-,T)$ to the sequence still yields an exact sequence. Equivalent characterizations of cogen$^{*}(_{A}T)$ are given in the following result (for example, see
\cite[Lemmas 2.2 and 2.4]{Mabiao}).

\begin{lem}\label{lemma:3.1}  Let $B={\rm End}_{A}(T)^{op}$. For an $A$-module $M$, the following statements are equivalent.
\begin{enumerate}
\item $M\in{{\rm cogen}^{*}(_{A}T)}$.

\item The map $\sigma_{M}:M\to \Hom_{B^{op}}(\Hom_{A}(M,T),T)$, $m\mapsto[f\mapsto f(m)]$ for $m\in M$ and $f\in\Hom_{A}(M,T)$ is an isomorphism and $\Hom_{A}(M,T)\in{^{\perp}(T_{B})}$.

\item  The canonical maps $$\Ext^{i}_{A}(N,M)\to \Ext^{i}_{B\opp}(\Hom_{A}(M,T),\Hom_{A}(N,T))$$
induced by the functor $\Hom_A(-,T)$ are isomorphisms for all $N\in{^{\perp}(_{A}T)}$ and $i\geqslant 0$.
\end{enumerate}
\end{lem}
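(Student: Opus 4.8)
The plan is to establish the chain of implications $(2)\Rightarrow(3)\Rightarrow(1)\Rightarrow(2)$, since $(3)$ is the strongest-looking statement but actually follows cheaply from $(2)$ by dimension shifting, while $(1)\Rightarrow(2)$ is where the real work lies. Throughout write $(-)^{*}=\Hom_A(-,T):A\modcat\to B\opp\modcat$ and $(-)^{\dagger}=\Hom_{B\opp}(-,T):B\opp\modcat\to A\modcat$, and recall the evaluation natural transformation $\sigma:\id\to(-)^{*\dagger}$. Note $\sigma_T$ is an isomorphism because $T^{*}=B$ and $B^{\dagger}={}_AT$, and more generally $\sigma_P$ is an isomorphism for $P\in\add({}_AT)$; this is the base case that feeds every induction below.

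\emph{$(2)\Rightarrow(3)$.} Given $M$ with $\sigma_M$ an isomorphism and $M^{*}\in{}^{\perp}(T_B)$, take any $N\in{}^{\perp}({}_AT)$ and a projective presentation $0\to N'\to P\to N\to 0$ with $P$ projective; then $N'\in{}^{\perp}({}_AT)$ as well since ${}^{\perp}({}_AT)$ is resolving. Applying $(-)^{*}$ and using $\Ext^{\geqslant1}_A(N,T)=0=\Ext^{\geqslant1}_A(N',T)$ gives a short exact sequence $0\to N^{*}\to P^{*}\to N'^{*}\to 0$ in $B\opp\modcat$ with $P^{*}$ projective. The case $i=0$ of $(3)$ is the statement that $\Hom_A(N,M)\to\Hom_{B\opp}(M^{*},N^{*})$ is bijective; I would prove this for all $N\in{}^{\perp}({}_AT)$ simultaneously by comparing the two five-term exact sequences obtained from $0\to N'\to P\to N\to 0$ (which is exact after $\Hom_A(-,M)$ up to an $\Ext^1_A(N,M)$ term) and from $0\to N^{*}\to P^{*}\to N'^{*}\to 0$, together with the known isomorphism for $N=P$ projective — here one uses $\sigma_M$ to identify $\Hom_A(P,M)\cong\Hom_{B\opp}(M^{*},P^{*})$ and $M^{*}\in{}^{\perp}(T_B)$ to kill the $\Ext^1_{B\opp}(M^{*},P^{*})$ term. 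Once $i=0$ is known for all such $N$, the higher cases $i\geqslant1$ follow by dimension shifting along $0\to N'\to P\to N\to 0$ on the source and $0\to N^{*}\to P^{*}\to N'^{*}\to 0$ on the target, since both $\Ext^{\bullet}_A(-,M)$ and $\Ext^{\bullet}_{B\opp}(-,N^{*})$-type long exact sequences are available and the projective/injective middle terms make the connecting maps isomorphisms in degrees $\geqslant1$.

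\emph{$(3)\Rightarrow(1)$.} Apply $(3)$ with $N={}_AA$: the isomorphisms $\Ext^i_A(A,M)\cong\Ext^i_{B\opp}(M^{*},A^{*})=\Ext^i_{B\opp}(M^{*},B)$ for all $i$ force $\Ext^i_{B\opp}(M^{*},B)=0$ for $i\geqslant1$ and identify $M\cong\Hom_{B\opp}(M^{*},B)$. Since ${}_AT\in\mathcal{W}({}_AT)$ (indeed $T$ is a direct summand of $A$'s position... more directly, ${}_AT$ has the trivial $\add(T)$-coresolution), one can run the standard argument: build an $\add(T_B)$-coresolution of $M^{*}$ as a $B\opp$-module using that $M^{*}\in{}^{\perp}(T_B)$ is... actually the cleanest route is to resolve $M^{*}$ by projective $B\opp$-modules $\cdots\to Q_1\to Q_0\to M^{*}\to 0$, note each $Q_j\in\add(T_B^{*})$... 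I would instead take a minimal injective-type coresolution in $\mathcal{W}(T_B)$ and apply $(-)^{\dagger}$, using $(3)$ to see exactness is preserved, yielding the desired $\add({}_AT)$-coresolution $0\to M\to T_0\to T_1\to\cdots$ of $M$ which stays exact under $(-)^{*}$; this exhibits $M\in\mathrm{cogen}^{*}({}_AT)$.

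\emph{$(1)\Rightarrow(2)$.} This is the main obstacle and the heart of the argument. Given an $\add(T)$-coresolution $0\to M\to T_0\s{d^0}\to T_1\to\cdots$ that stays exact under $(-)^{*}$, break it into short exact sequences $0\to Z_j\to T_j\to Z_{j+1}\to0$ with $Z_0=M$, each remaining short exact after $(-)^{*}$. Applying $\Hom_A(-,T)$ to the whole complex and using exactness, one reads off $\Ext^i_A(M,T)=0$ for $i\geqslant1$ by the usual coresolution argument (the coresolution computes $\Ext$ and the $\Hom$-exactness kills it), so $M\in{}^{\perp}({}_AT)$. For $\sigma_M$: apply $(-)^{*}$ to get an exact sequence $0\to M^{*}\to T_1^{*}\to\cdots$ wait — it goes the other way: $(-)^{*}$ is contravariant, so from $0\to M\to T_0\to T_1\to\cdots$ we get $\cdots\to T_1^{*}\to T_0^{*}\to M^{*}\to 0$, a projective $B\opp$-resolution of $M^{*}$ (the $T_j^{*}\in\add(B_B)$ are projective). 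Now apply $(-)^{\dagger}=\Hom_{B\opp}(-,T)$ to this resolution. The resulting complex is $0\to M^{*\dagger}\to T_0^{*\dagger}\to T_1^{*\dagger}\to\cdots$, and since $\sigma_{T_j}$ is an isomorphism this is identified with $0\to M^{*\dagger}\to T_0\to T_1\to\cdots$; comparing with the original $\add(T)$-coresolution and using that the latter is exact, the map $\sigma_M:M\to M^{*\dagger}$ is forced to be an isomorphism (it is the map on kernels induced by an isomorphism of complexes in positive degrees). Finally $\Hom_{B\opp}(M^{*},T)\in{}^{\perp}(T_B)$: the cohomology of the complex obtained by applying $(-)^{\dagger}$ to the projective resolution of $M^{*}$ computes $\Ext^{\bullet}_{B\opp}(M^{*},T)$, and we have just seen this complex is exact in positive degrees (it equals the original $\add(T)$-coresolution of $M$, shifted), so $\Ext^i_{B\opp}(M^{*},T)=0$ for all $i\geqslant1$, i.e. $M^{*}\in{}^{\perp}(T_B)$. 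The delicate points to get right are the direction-bookkeeping for the contravariant functors and verifying that the $\Hom$-exactness hypothesis in the definition of $\mathrm{cogen}^{*}({}_AT)$ is exactly what makes $\cdots\to T_0^{*}\to M^{*}\to0$ a genuine resolution by projectives rather than merely a complex.
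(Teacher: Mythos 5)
The paper does not prove this lemma; it cites \cite[Lemmas 2.2 and 2.4]{Mabiao}, so your proposal has to stand on its own. Your steps $(1)\Rightarrow(2)$ and $(2)\Rightarrow(3)$ are essentially sound (with the caveat noted below), but $(3)\Rightarrow(1)$ contains a genuine error that then leaves the step with no working route.

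The error is that you set $A^{*}=B$. In fact $A^{*}=\Hom_A({}_AA,T)\cong T_B$, not $B_B$. Thus taking $N={}_AA$ in $(3)$ gives $\Ext^i_{B\opp}(M^{*},T)=0$ for $i\geqslant1$ (i.e.\ $M^{*}\in{}^{\perp}(T_B)$) and $M\cong\Hom_{B\opp}(M^{*},T)=M^{*\dagger}$ via $\sigma_M$ --- which is exactly condition $(2)$, not the conclusions $M^{*}\in{}^{\perp}(B_B)$ and $M\cong\Hom_{B\opp}(M^{*},B)$ that you recorded. Having gone down the wrong track, your attempted completion of $(3)\Rightarrow(1)$ is forced to hand-wave: you first try ``$Q_j\in\add(T_B^{*})$'' (for $Q_j$ projective over $B\opp$ this makes no sense; you presumably meant $Q_j^{\dagger}\in\add(B^{\dagger})=\add({}_AT)$), then switch to ``take a minimal injective-type coresolution in $\mathcal{W}(T_B)$,'' which is circular since nothing established so far puts $M^{*}$ in $\mathcal{W}(T_B)$, and indeed $\mathcal{W}(T_B)$ need have no good properties here because $T$ is an arbitrary module. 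The fix is straightforward once $A^{*}\cong T$ is corrected: from $(3)$ with $N=A$ deduce $(2)$ (i.e.\ $M^{*}\in{}^{\perp}(T_B)$ and $\sigma_M$ iso), take a projective $B\opp$-resolution $\cdots\to Q_1\to Q_0\to M^{*}\to0$, apply $(-)^{\dagger}$ (exact by $M^{*}\in{}^{\perp}(T_B)$) to get the $\add({}_AT)$-coresolution of $M\cong M^{*\dagger}$, and check $\Hom$-exactness by dualizing once more and using $\sigma_{Q_j}$ isomorphisms --- i.e.\ exactly your own argument for $(1)\Rightarrow(2)$, run in reverse. Since that argument only uses $(2)$, you might as well close the cycle $(3)\Rightarrow(2)\Rightarrow(1)$ rather than attempting $(3)\Rightarrow(1)$ directly.

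Two further slips worth flagging. In $(1)\Rightarrow(2)$ you assert that the $\Hom$-exact $\add(T)$-coresolution forces $\Ext^i_A(M,T)=0$, ``so $M\in{}^{\perp}({}_AT)$.'' This is false in general: a coresolution by objects of $\add(T)$ computes nothing about $\Ext^{\bullet}_A(M,T)$ unless one already knows $\Ext^{\geqslant1}_A(T,T)=0$, which is not a hypothesis here. (It is also unneeded, since condition $(2)$ does not assert $M\in{}^{\perp}({}_AT)$ --- that extra condition is precisely what distinguishes $\mathcal{W}({}_AT)$ from $\mathrm{cogen}^{*}({}_AT)$.) And in $(2)\Rightarrow(3)$ you write that $P^{*}$ is projective; in fact $P^{*}\in\add(T_B)$, not $\add(B_B)$. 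Your argument is salvaged because you correctly invoke $M^{*}\in{}^{\perp}(T_B)$ to kill $\Ext^{1}_{B\opp}(M^{*},P^{*})$, but the word ``projective'' there is wrong and suggests a confusion that should be cleaned up before submitting.
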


In the introduction, we have defined $${\mathcal{W}{(_{A}T)}}:={^{\perp}(_{A}T)}\cap {\rm cogen}^{*}(_{A}T).$$
Clearly, $T\in {\mathcal{W}{(_{A}T)}}$ if and only if $\Ext_A^i(T,T)=0$ for all $i\geqslant 1$. Thanks to Lemma \ref{lemma:3.1}(3),
the functor $\Hom_A(-,T):A\modcat\to B^{op}\modcat$ can be restricted to a fully faithful functor ${\mathcal{W}{(_{A}T)}}\to B^{op}\modcat$ which preserves extension groups of modules. However, in general, ${\mathcal{W}{(_{A}T)}}$ is not a resolving subcategory of $A$-mod since it
may not contain projective $A$-modules. Following  \cite{Wakamatsu}, an $A$-module $T$ is said to be \emph{Wakamatsu tilting} if
$T\in {\mathcal{W}{(_{A}T)}}$  and $_{A}A\in{\mathcal{W}{(_{A}T)}}$.
This is also equivalent to the following two conditions:

$(1)$ $\textrm{End}_{B\opp}(T)\cong A$, where $B=\textrm{End}_{A}(T)^{op}$;

$(2)$ $\Ext_{A}^{i}(T,T)=0=\Ext_{B\opp}^{i}(T,T)$ for all $i\geqslant1$.

\noindent By these conditions, if $_{A}T$ is Wakamatsu tilting, then $T_{B}$ is also Wakamatsu tilting. So, we can say $T$ is a Wakamatsu tilting $A$-$B$-bimodule.

Now, we collect some basic properties of Wakamatsu tilting modules.

\begin{lem}{\rm(}\cite[Proposition 2.11]{Mantese}{\rm)}\label{proposition:3.2}
Let $T$ be a Wakamatsu tilting $A$-module. Then:

(1) $\mathcal{W}{(_{A}T)}$ is a resolving subcategory of $A$-{\rm mod}.

(2) $\mathcal{W}{(_{A}T)}\cap{\mathcal{W}{(_{A}T)}}^{\perp}={\rm add}(_{A}T)$ and
 $^{\perp}({\mathcal{W}{(_{A}T)}}^{\perp})=\mathcal{W}{(_{A}T)}$.

(3) $T$ is an injective cogenerator for $\mathcal{W}{(_{A}T)}$, that is, for any $X\in \mathcal{W}{(_{A}T)}$,
there is an exact sequence $0\to X\to I_0\to X_1\to 0$ in $A\modcat$
such that $I_0\in\add(_AT)$ and  $X_1\in \mathcal{W}{(_{A}T)}$.
\end{lem}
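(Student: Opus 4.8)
The plan is to prove the three statements of Lemma~\ref{proposition:3.2} together, leaning on Lemma~\ref{lemma:3.1} and the symmetry between $_{A}T$ and $T_{B}$ coming from the Wakamatsu tilting hypothesis. The key observation, which I would establish first, is the identification $\mathcal{W}{(_{A}T)}=\mathrm{cogen}^{*}(_{A}T)$: since $_{A}A\in\mathcal{W}{(_{A}T)}$ and $\mathcal{W}{(_{A}T)}$ is closed under direct summands (being $^{\perp}({_A}T)\cap\mathrm{cogen}^{*}(_{A}T)$, each piece obviously closed under summands), one checks that $\mathrm{cogen}^{*}(_{A}T)\subseteq{^{\perp}(_{A}T)}$: if $0\to M\to T_{0}\to T_{1}\to\cdots$ is an $\mathrm{add}(_{A}T)$-coresolution that stays exact under $\Hom_{A}(-,T)$, then breaking it into short exact sequences and using $\Ext_{A}^{i}(T_{j},T)=0$ for $i\geqslant1$ (which holds because $T$ is Wakamatsu tilting, hence $\Ext^{\geqslant1}_{A}(T,T)=0$) gives $\Ext_{A}^{i}(M,T)=0$ for all $i\geqslant1$ by dimension shifting. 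Thus $\mathcal{W}{(_{A}T)}=\mathrm{cogen}^{*}(_{A}T)$, and via Lemma~\ref{lemma:3.1}(2) this is exactly the subcategory of modules $M$ on which $\sigma_{M}$ is an isomorphism and $\Hom_{A}(M,T)\in{^{\perp}(T_{B})}$.

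For part~(1), I would verify the three closure properties of a resolving subcategory. It contains the projective modules because every projective is a summand of a free module, $_{A}A\in\mathcal{W}{(_{A}T)}$, and $\mathcal{W}{(_{A}T)}$ is closed under finite direct sums and summands. Closure under extensions: given $0\to M'\to M\to M''\to 0$ with $M',M''\in\mathcal{W}{(_{A}T)}$, the $^{\perp}({_A}T)$-membership of $M$ is immediate from the long exact $\Ext$-sequence; for the $\mathrm{cogen}^{*}$ part one uses the horseshoe-type argument, splicing the two $\mathrm{add}(_{A}T)$-coresolutions and checking $\Hom_{A}(-,T)$-exactness degree by degree via the snake lemma, the point being that $\Ext^{1}_{A}(M'',T)=0$ makes the relevant lifting possible. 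Closure under kernels of epimorphisms: if $0\to K\to M\to N\to 0$ with $M,N\in\mathcal{W}{(_{A}T)}$, then $K\in{^{\perp}(_{A}T)}$ from the long exact sequence (using $\Ext^{i}_{A}(N,T)=0$), and to see $K\in\mathrm{cogen}^{*}(_{A}T)$ I would use Lemma~\ref{lemma:3.1}(3) applied with $N$ ranging over $^{\perp}(_{A}T)$, transporting the short exact sequence $0\to K\to M\to N\to0$ under $\Hom_{A}(-,T)$ to a short exact sequence in $B^{op}\modcat$ between objects in $^{\perp}(T_{B})$, and then invoking the already-known resolving property of $\mathcal{W}{(T_{B})}$ on the $B^{op}$-side together with the double-dual isomorphism $\sigma_{K}$ to pull $K$ back into $\mathcal{W}{(_{A}T)}$; alternatively, one constructs the $\mathrm{add}(_{A}T)$-coresolution of $K$ directly by a mapping-cone/pullback argument.

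For part~(2), the inclusion $\mathrm{add}(_{A}T)\subseteq\mathcal{W}{(_{A}T)}\cap\mathcal{W}{(_{A}T)}^{\perp}$ is clear since $T\in\mathcal{W}{(_{A}T)}$ and $\Ext_{A}^{\geqslant1}(X,T)=0$ for all $X\in\mathcal{W}{(_{A}T)}$ by definition. Conversely, if $X\in\mathcal{W}{(_{A}T)}\cap\mathcal{W}{(_{A}T)}^{\perp}$, take the start $0\to X\to T_{0}\to X_{1}\to0$ of an $\mathrm{add}(_{A}T)$-coresolution with $X_{1}\in\mathcal{W}{(_{A}T)}$ (the existence of such a presentation with $X_{1}$ again in $\mathcal{W}{(_{A}T)}$ is essentially the content of part~(3), proved below); then $\Ext^{1}_{A}(X_{1},X)=0$ forces the sequence to split, so $X$ is a summand of $T_{0}\in\mathrm{add}(_{A}T)$. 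For the identity $^{\perp}(\mathcal{W}{(_{A}T)}^{\perp})=\mathcal{W}{(_{A}T)}$, the inclusion $\subseteq$ follows from $\mathrm{add}(_{A}T)\subseteq\mathcal{W}{(_{A}T)}^{\perp}$ together with the fact that any $X$ in the left-hand side has $\Ext^{\geqslant1}_{A}(X,T)=0$; building an $\mathrm{add}(_{A}T)$-coresolution step by step and checking at each stage that the cosyzygies stay in $^{\perp}(\mathcal{W}{(_{A}T)}^{\perp})$, one gets $X\in\mathrm{cogen}^{*}(_{A}T)=\mathcal{W}{(_{A}T)}$. The reverse inclusion $\supseteq$ is trivial. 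Part~(3) is then extracted along the way: for $X\in\mathcal{W}{(_{A}T)}$ take the first step $0\to X\to I_{0}\to X_{1}\to0$ of an $\mathrm{add}(_{A}T)$-coresolution witnessing $X\in\mathrm{cogen}^{*}(_{A}T)$; the truncated sequence $0\to X_{1}\to T_{1}\to T_{2}\to\cdots$ exhibits $X_{1}\in\mathrm{cogen}^{*}(_{A}T)=\mathcal{W}{(_{A}T)}$, and $\Hom_{A}(-,T)$-exactness passes to this truncation, so $X_{1}\in\mathcal{W}{(_{A}T)}$ as required.

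I expect the main obstacle to be closure of $\mathcal{W}{(_{A}T)}$ under kernels of epimorphisms in part~(1): unlike extensions, this does not follow from a naive horseshoe argument, and one genuinely needs either the reflexivity characterization of Lemma~\ref{lemma:3.1}(2)--(3) to transfer the problem to the endomorphism-algebra side—where the analogous statement for $\mathcal{W}{(T_{B})}$ must be available, so some care with the logical order (or a simultaneous induction on both sides) is needed—or a careful direct construction of the $\mathrm{add}(_{A}T)$-coresolution of the kernel using pullbacks along the coresolutions of $M$ and $N$ and verifying $\Hom_{A}(-,T)$-exactness via repeated use of $\Ext^{1}_{A}(N,T)=0$. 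Since this is cited as \cite[Proposition 2.11]{Mantese}, in the paper I would simply invoke that reference; the sketch above records how the proof goes.
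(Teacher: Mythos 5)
The paper itself offers no proof of this lemma; it is imported verbatim from Mantese--Reiten, so I can only assess your sketch on its own terms, and the ``key observation'' on which it rests is wrong. You claim $\mathcal{W}{(_{A}T)}={\rm cogen}^{*}(_{A}T)$ by arguing that any $M\in{\rm cogen}^{*}(_{A}T)$ lies in ${^{\perp}(_{A}T)}$ ``by dimension shifting'' along a coresolution $0\to M\to T_0\to T_1\to\cdots$. But dimension shifting in the first $\Ext$-variable along a \emph{co}resolution raises the degree rather than lowering it: writing $M_j$ for the $j$-th cosyzygy and using $\Ext_A^{\geqslant 1}(T_j,T)=0$, the short exact sequences give $\Ext_A^{i}(M,T)\cong\Ext_A^{i+j}(M_j,T)$ for every $i\geqslant 1$ and $j\geqslant 0$, and there is no endpoint at which the right-hand side visibly vanishes. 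What the $\Hom_A(-,T)$-exactness of the coresolution actually delivers is $\Ext_A^{1}(M_j,T)=0$ for $j\geqslant 1$ (surjectivity of $\Hom_A(T_{j-1},T)\to\Hom_A(M_j,T)$), which says nothing about $\Ext_A^{i}(M,T)$. Indeed, if the inclusion ${\rm cogen}^{*}(_{A}T)\subseteq{^{\perp}(_{A}T)}$ held, the intersection with ${^{\perp}(_{A}T)}$ in the definition of $\mathcal{W}{(_{A}T)}$ would be redundant; for $T=A$ this would equate $\mathcal{GP}(A)$ with ${\rm cogen}^{*}(_{A}A)$, but by Lemma~\ref{lemma:3.1}(2) membership in the latter encodes reflexivity of $\sigma_M$ and $\Ext$-vanishing of $\Hom_A(M,A)$ over $A^{op}$, not of $M$ over $A$, and these conditions are independent (cf.\ the Ringel--Zhang examples~\cite{RZ1}). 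The $\Ext$-vanishing in the first variable must be imposed, as the paper's definition does.

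Several later steps lean on the false identity and need independent repair. In part~(3) you deduce $X_1\in\mathcal{W}{(_{A}T)}$ from $X_1\in{\rm cogen}^{*}(_{A}T)$; the missing ingredient $X_1\in{^{\perp}(_{A}T)}$ does hold, but by a separate argument: $\Ext_A^{1}(X_1,T)=0$ from the surjectivity of $\Hom_A(I_0,T)\to\Hom_A(X,T)$, and $\Ext_A^{i}(X_1,T)\cong\Ext_A^{i-1}(X,T)=0$ for $i\geqslant 2$ since $X\in{^{\perp}(_{A}T)}$. In part~(2), the inclusion ${^{\perp}(\mathcal{W}{(_{A}T)}^{\perp})}\subseteq\mathcal{W}{(_{A}T)}$ is glossed: before ``building an $\add(_AT)$-coresolution step by step'' you must explain why such $X$ admits an \emph{injective} left $\add(_{A}T)$-approximation at all, which is not automatic and is where the Wakamatsu hypothesis (Wakamatsu's lemma, $_AA\hookrightarrow\add(_AT)$) enters nontrivially. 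You do correctly flag the circularity risk in closure under kernels of epimorphisms when transferring to the $B^{op}$-side; your instinct that this is the genuinely hard step is right, but the sketch defers it rather than closing it, and in Mantese--Reiten one resolves it by a direct pullback construction of the coresolution of the kernel, not by invoking the as-yet-unproved resolving property of $\mathcal{W}{(T_{B})}$.
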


For simplicity, sometimes we write $(X,Y)$ for $\Hom_{A}(X,Y)$ in our proofs for $A$-modules $X$ and $Y$.

\begin{lem}\label{prop:3.3} Let $_{A}T$ be a Wakamatsu tilting module. Then:

(1) $^{\perp}(_{A}T)\cap{\mathcal{P}^{\leqslant n}(A)}=
\mathcal{W}{(_{A}T)}\cap{\mathcal{P}^{\leqslant n}(A)}$ for any $n\geqslant0$;

(2) $_{A}T\in{\mathcal{P}^{<\infty}(A)}$ if and only if $_{A}T\in{\mathcal{GP}^{<\infty}(A)}$ and $\mathcal{GP}(A)\subseteq{\mathcal{W}{(_{A}T)}}$;

(3) If $_{A}T\in{\mathcal{P}^{<\infty}(A)}$, then
 $\mathcal{W}{(_{A}T)}\cap{\mathcal{GP}^{\leqslant n }(A)}={^{\perp}(_{A}T)}\cap{\mathcal{GP}^{\leqslant n }(A)}$ for any $n\geqslant0$.

(4) If $_{A}T$ is a tilting module of projective dimension $\ell$, then $\mathcal{W}{(_{A}T)}\subseteq{\mathcal{GP}^{\leqslant \ell}(A)}$.
\end{lem}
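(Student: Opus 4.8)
The plan is to treat the four parts in order, using the structural results on $\mathcal{W}(_AT)$ from Lemmas \ref{lemma:3.1} and \ref{proposition:3.2} together with Lemma \ref{lemma:2.1} on $\mathcal{X}$-resolution dimension. For part $(1)$, since $\mathcal{W}(_AT)\subseteq{}^{\perp}(_AT)$ always holds, only the inclusion ${}^{\perp}(_AT)\cap\mathcal{P}^{\leqslant n}(A)\subseteq\mathcal{W}(_AT)$ needs proof. I would argue by induction on $n$: for $n=0$ projective modules lie in $\add(_AA)\subseteq\mathcal{W}(_AT)$ because $_AA\in\mathcal{W}(_AT)$ by the Wakamatsu hypothesis and $\mathcal{W}(_AT)$ is closed under direct summands. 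For the inductive step, given $M\in{}^{\perp}(_AT)$ with $\pd(_AM)\leqslant n$, take a short exact sequence $0\to\Omega_A(M)\to P\to M\to 0$ with $P$ projective; then $\Omega_A(M)$ has projective dimension at most $n-1$, and since $P,M\in{}^{\perp}(_AT)$ one gets $\Omega_A(M)\in{}^{\perp}(_AT)$ from the long exact $\Ext$-sequence, hence $\Omega_A(M)\in\mathcal{W}(_AT)$ by induction; as $\mathcal{W}(_AT)$ is resolving (Lemma \ref{proposition:3.2}(1)) and closed under extensions, and $P\in\mathcal{W}(_AT)$, the sequence shows $M\in\mathcal{W}(_AT)$ — here one uses that a resolving subcategory is closed under cokernels of monomorphisms between its objects whenever the cokernel lies in the ambient category, or more directly closure under extensions applied to $0\to P\to ?\to ?$; the cleanest formulation is that $\mathcal{W}(_AT)$ being resolving and containing $P$ and $\Omega_A(M)$ forces $M$ in via the horseshoe-type argument. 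I expect this part to be routine.

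For part $(2)$, the forward direction: if $_AT\in\mathcal{P}^{<\infty}(A)$ then trivially $_AT\in\mathcal{GP}^{<\infty}(A)$ since $\mathcal{P}(A)\subseteq\mathcal{GP}(A)$; and $\mathcal{GP}(A)\subseteq\mathcal{W}(_AT)$ should follow because a Gorenstein-projective module $X$ satisfies $\Ext^i_A(X,Q)=0$ for all projectives $Q$ and all $i\geqslant1$, and since $_AT$ has a finite projective resolution a dimension-shifting argument yields $\Ext^i_A(X,T)=0$, giving $X\in{}^{\perp}(_AT)$; then $X\in\mathcal{P}^{\leqslant 0}(A)$-analogue fails, so one needs the $\mathrm{cogen}^*$ condition too — here I would invoke that for $X\in\mathcal{GP}(A)$ one has a complete projective resolution, and applying $\Hom_A(-,T)$ with $_AT$ of finite projective dimension keeps the relevant part exact (again by dimension shifting against projectives), so $X\in\mathrm{cogen}^*(_AT)$ via Lemma \ref{lemma:3.1}(3) or directly. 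For the converse, assume $_AT\in\mathcal{GP}^{<\infty}(A)$ and $\mathcal{GP}(A)\subseteq\mathcal{W}(_AT)$; write a finite $\mathcal{GP}(A)$-resolution of $_AT$ and use Lemma \ref{lemma:2.1} to locate a Gorenstein-projective syzygy $G$ of $T$; then $G\in\mathcal{GP}(A)\subseteq\mathcal{W}(_AT)$, but also $G\in\mathcal{W}(_AT)^{\perp}$ should hold because... — this is the delicate point: I would try to show $G\in\add(_AT)$ using Lemma \ref{proposition:3.2}(2), which would make $G$ both Gorenstein-projective and... no, rather I would argue that $_AT\in\mathcal{GP}^{<\infty}(A)$ plus $T$ Wakamatsu tilting forces finite projective dimension by a counting/Ext-vanishing argument against $T$ itself combined with $\Ext^i_A(T,T)=0$.

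For part $(3)$, assuming $_AT\in\mathcal{P}^{<\infty}(A)$: again $\mathcal{W}(_AT)\cap\mathcal{GP}^{\leqslant n}(A)\subseteq{}^{\perp}(_AT)\cap\mathcal{GP}^{\leqslant n}(A)$ is automatic, and for the reverse inclusion I would induct on $n$, with the base case $n=0$ being exactly $\mathcal{GP}(A)\subseteq\mathcal{W}(_AT)$ from part $(2)$, restricted by ${}^{\perp}(_AT)$ which is redundant there; the inductive step takes $M\in{}^{\perp}(_AT)\cap\mathcal{GP}^{\leqslant n}(A)$, forms $0\to M\to I\to M'\to 0$ with $I$ a suitable Gorenstein-projective (or take the Gorenstein-projective coresolution the other way), and pushes the dimension down — more carefully, use a conflation $0\to G\to P\to M\to 0$? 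No: for Gorenstein-projective dimension one resolves by $\mathcal{GP}(A)$, so take $0\to K\to G_0\to M\to 0$ with $G_0\in\mathcal{GP}(A)$ and $\mathrm{Gpd}(K)\leqslant n-1$; then $G_0\in\mathcal{W}(_AT)$ by part $(2)$, hence $G_0\in{}^{\perp}(_AT)$, so $K\in{}^{\perp}(_AT)$, so by induction $K\in\mathcal{W}(_AT)$, and resolving-closure gives $M\in\mathcal{W}(_AT)$. Finally part $(4)$: a tilting module of projective dimension $\ell$ is in particular Wakamatsu tilting with $_AT\in\mathcal{P}^{<\infty}(A)$, so part $(2)$ gives $\mathcal{GP}(A)\subseteq\mathcal{W}(_AT)$; then I want $\mathcal{W}(_AT)\subseteq\mathcal{GP}^{\leqslant\ell}(A)$, i.e. every $X\in\mathcal{W}(_AT)$ has $\mathrm{Gpd}(X)\leqslant\ell$. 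The argument: $X$ has an $\add(_AT)$-coresolution $0\to X\to T^0\to T^1\to\cdots$ staying exact under $\Hom_A(-,T)$; each $T^i$ lies in $\add(_AT)\subseteq\mathcal{P}^{\leqslant\ell}(A)\subseteq\mathcal{GP}^{\leqslant\ell}(A)$; but a coresolution by objects of $\mathrm{Gpd}\leqslant\ell$ does not immediately bound $\mathrm{Gpd}(X)$. Instead I would use the tilting exact sequence $0\to{}_AA\to T_0\to\cdots\to T_\ell\to 0$ from (T3), apply it to resolve $X$: since $X\in{}^{\perp}(_AT)$ and ${}^{\perp}(_AT)$ contains $\add(_AT)$, and combine with the fact (from tilting theory / Lemma \ref{proposition:3.2}) that $\mathcal{W}(_AT)={}^{\perp}(\mathcal{W}(_AT)^{\perp})$ and $\mathcal{W}(_AT)^{\perp}\supseteq\add(_AT)^{\perp}$... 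The cleanest route: by Lemma \ref{prop:3.3}(3) (just proved) it suffices to show $\mathcal{W}(_AT)\subseteq\mathcal{GP}^{<\infty}(A)$ with the bound $\ell$, and for $X\in\mathcal{W}(_AT)$ one has $\Ext^{i}_A(X,T)=0$ for $i\geqslant1$; resolving $X$ projectively, $\Omega^\ell_A(X)$ should turn out Gorenstein-projective because $\Ext^{>0}_A(\Omega^\ell_A(X),A)$ vanishes (using $\pd(_AT)\leqslant\ell$ and dimension shifting of $\Ext^{*}_A(X,T)=0$ back to $\Ext^{*}_A(X,\Omega^{-j}\text{-pieces})$) together with total reflexivity checked via the self-orthogonality of $T$ and the bimodule structure — this last verification (that the $\ell$-th syzygy is not merely semi-Gorenstein-projective but genuinely Gorenstein-projective) is where I expect the real work, and I would lean on the known equivalence, for Wakamatsu tilting $_AT_B$ of finite projective dimension, between the relevant $\Ext$-vanishing and membership in $\mathcal{GP}(A)$, which is essentially part $(2)$ applied to syzygies.

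The main obstacle is part $(4)$, specifically upgrading ``$\Omega^\ell_A(X)$ is semi-Gorenstein-projective'' to ``$\Omega^\ell_A(X)$ is Gorenstein-projective'': semi-Gorenstein-projectivity ($\Ext^{>0}_A(-,A)=0$) comes cheaply from dimension-shifting the hypotheses $\pd(_AT)\leqslant\ell$ and $X\in{}^{\perp}(_AT)$, but genuine Gorenstein-projectivity additionally requires exactness of a complete resolution after $\Hom_A(-,A)$, which I expect to extract from the faithfully-balanced/Wakamatsu tilting structure and the $\mathrm{cogen}^*$ condition defining $\mathcal{W}(_AT)$, perhaps by transporting the question through $\Hom_A(-,T)$ to $B^{\mathrm{op}}$ and back via Lemma \ref{lemma:3.1}. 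If a direct argument proves awkward, the fallback is to combine parts $(1)$--$(3)$ with the tilting sequence (T3) to reduce $X\in\mathcal{W}(_AT)$ to the case $X\in{}^{\perp}(_AT)\cap\mathcal{P}^{\leqslant\ell}(A)$ by a finite filtration, and then conclude via $\mathcal{P}^{\leqslant\ell}(A)\subseteq\mathcal{GP}^{\leqslant\ell}(A)$.
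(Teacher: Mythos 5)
The central gap in your proposal is the repeated appeal to a false closure property of resolving subcategories. In both parts $(1)$ and $(3)$ you arrive at a short exact sequence $0\to K\to Y\to M\to 0$ with $K,Y\in\mathcal{W}(_AT)$ and conclude that $M\in\mathcal{W}(_AT)$ ``by resolving closure'' or ``via the horseshoe-type argument.'' Resolving subcategories are closed under extensions, direct summands and \emph{kernels of epimorphisms}, but \emph{not} under cokernels of monomorphisms: for instance $\add(_AA)$ is resolving, while a cokernel of a monomorphism of projectives can be an arbitrary module of projective dimension one. (The statement you need — that $\mathcal{W}(_AT)$ is closed under such cokernels when the cokernel lies in ${^{\perp}(_AT)}$ — is in fact true, but it requires a separate argument through Lemma~\ref{lemma:3.1}, and you neither prove it nor cite it; you cite a non-existent general fact instead.) The paper avoids the issue entirely. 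For $(1)$ it shows directly that $M\in\mathrm{cogen}^*(_AT)$: since $M$ has a finite projective resolution and $M\in{^{\perp}(_AT)}$, applying $\Hom_A(-,T)$ yields a finite $\add(T_B)$-resolution of $\Hom_A(M,T)$, whence $\Hom_A(M,T)\in{^{\perp}(T_B)}$; comparing with $\Hom_{B^{op}}(\Hom_A(-,T),T)$ via the evaluation maps $\sigma$ and using that $\sigma_{P_i}$ are isomorphisms shows $\sigma_M$ is one too, so Lemma~\ref{lemma:3.1}(2) applies. For $(3)$ it uses \cite[Lemma 2.17]{cfh} to get $0\to M\to H\to G\to 0$ with $\pd(H)\leqslant n$ and $G\in\mathcal{GP}(A)$; then $H\in\mathcal{W}(_AT)$ by $(1)$, $G\in\mathcal{W}(_AT)$ by $(2)$, and now $M$ is the \emph{kernel} of an epimorphism between objects of $\mathcal{W}(_AT)$, so the genuine resolving closure applies. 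Your $\mathcal{GP}$-resolution $0\to K\to G_0\to M\to 0$ puts $M$ on the wrong side.

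You also correctly flag that parts $(2)$ (converse) and $(4)$ are where the real work lies, but your sketches do not supply that work. For $(2)$ the paper again uses \cite[Lemma 2.17]{cfh}: it embeds $T$ into a sequence $0\to T\to H\to G\to 0$ with $H\in\mathcal{P}^{<\infty}(A)$, $G\in\mathcal{GP}(A)\subseteq\mathcal{W}(_AT)\subseteq{^{\perp}(_AT)}$, forcing the sequence to split and $T$ to be a summand of $H$ — a far cleaner mechanism than the ``counting/Ext-vanishing against $T$ itself'' you gesture at, which as stated does not bound $\pd(T)$. For $(4)$ you identify the right obstruction (semi-Gorenstein-projectivity of $\Omega^\ell_A(M)$ is cheap, full Gorenstein-projectivity is not) but propose a fallback filtration that does not exist. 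The paper's mechanism is: starting from the $\add(_AT)$-coresolution $0\to M\to X_0\to X_1\to\cdots$ of $M\in\mathcal{W}(_AT)$ and using $\pd(_AT)=\ell$, a horseshoe argument applied degree by degree produces an exact \emph{projective} coresolution $0\to\Omega^\ell_A(M)\to Q_0\to Q_1\to\cdots$ whose cosyzygies are exactly the $\ell$-th syzygies of the successive cokernels $M_j$ of the original coresolution; each $M_j$ still lies in $\mathcal{W}(_AT)$, so the dimension shift along the tilting coresolution $0\to A\to T_0\to\cdots\to T_\ell\to 0$ gives $\Ext^{i}_A(\Omega^\ell_A(M_j),A)\cong\Ext^{i}_A(M_j,T_\ell)=0$ for all $i\geqslant1$ and all $j$, which is precisely the $\Hom_A(-,A)$-exactness needed to make the glued complex a complete projective resolution. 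Without the horseshoe construction of a coresolution whose cosyzygies you control, the Ext-vanishing on $\Omega^\ell_A(M)$ alone does not get you there, as you anticipated.
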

\begin{proof} (1) It suffices to prove $^{\perp}(_{A}T)\cap{\mathcal{P}^{\leqslant n}(A)}\subseteq
{{\rm cogen}^{*}(_{A}T)}$. Let $M\in{^{\perp}(_{A}T)}\cap{\mathcal{P}^{\leqslant n}(A)}$. Then there exists an exact sequence in $A$-mod $$0\to P_{n}\to\cdots\to P_1\to P_0\to M\to 0,$$
where $P_{i}$ is projective for all $0\leqslant i\leqslant n$. This yields the following exact sequence in $B^{op}\modcat$:
$$0\to \Hom_{A}(M,T)\to \Hom_{A}(P_{0},T)\to  \Hom_{A}(P_{1},T)\to \cdots \to \Hom_{A}(P_{n},T)\to 0.$$
Since $\Hom_{A}(P_{i},T)\in{\textrm{add}(T_{B})}\subseteq{^{\perp}(T_{B})}$ for all $0\leqslant i\leqslant n$ and since  ${^{\perp}(T_{B})}$ is a resolving subcategory of $B^{op}\modcat$, we have $\Hom_{A}(M,T)\in{^{\perp}(T_{B})}$. It follows that
there is a commutative diagram with exact rows in $A$-mod:
$$\xymatrix{
0\ar[r] &P_{n}\ar[d]^{\sigma_{P_{n}}}\ar[r]&\cdots \ar[r]&P_{0}\ar[r]\ar[d]^{\sigma_{P_{0}}}&M\ar[r]\ar[d]^{\sigma_{M}}&0\\
0\ar[r] &((P_{n},T),T)\ar[r]&\cdots \ar[r]&((P_{0},T),T)\ar[r]&((M,T),T)\ar[r]&0.}$$
 Since $\sigma_{P_{i}}$ is an isomorphism for all $0\leqslant i\leqslant n$, we see that $\sigma_{M}$ is an isomorphism.
 Thus $M\in{{\rm cogen}^{*}(_{A}T)}$ by Lemma \ref{lemma:3.1}.

(2) Assume that $_{A}T\in{\mathcal{GP}^{<\infty}(A)}$ and $\mathcal{GP}(A)\subseteq{\mathcal{W}{(_{A}T)}}$. Since $_{A}T\in{\mathcal{GP}^{<\infty}(A)}$, we see from \cite[Lemma 2.17]{cfh} that there is an exact sequence
$0\to T\to H\to G\to 0 $ in $A$-mod,  where $H\in{\mathcal{P}^{<\infty}(A)}$ and $G\in{\mathcal{GP}(A)}$. By $\mathcal{GP}(A)\subseteq{\mathcal{W}{(_{A}T)}}$,
the exact sequence $0\to T\to H\to G\to 0$ splits. Thus $_{A}T$ is isomorphic to a direct summand of $H$, which forces $T\in{\mathcal{P}^{<\infty}(A)}$.

Conversely, assume $_{A}T\in{\mathcal{P}^{<\infty}(A)}$. Let $M$ be an $A$-module in $\mathcal{GP}(A)$. There exists a complete projective resolution over $A$
$$P^{\bullet}:\cdots \to P^{-1}\s{d^{-1}}\to P^{0}\s{d^{0}}\to P^{1}\s{d^{1}}\to\cdots$$
such that $M\cong \textrm{Im}(d^{-1})$. Set $K^{i}:=\textrm{Im}(d^{i})$ for all $i\in{\mathbb{Z}}$. Then $K^{i}\in{\mathcal{GP}(A)}$. Since $\mathcal{GP}(A)\subseteq{^{\perp}(_{A}T)}$,  the complex $\Hom_{A}(P^{\bullet},T)$ is again exact. Hence we have the following commutative diagrams
$$\xymatrix{
0\ar[r] &M\ar[d]^{\sigma_{M}}\ar[r]&P^{0} \ar[d]^{\cong}\ar[r]&K^{1}\ar[r]\ar[d]^{\sigma_{K^{1}}}&0\\
0\ar[r]&((M,T),T)\ar[r]&((P^{0},T),T)\ar[r]&(((K^{1},T),T)}$$
and
$$\xymatrix{
0\ar[r] &K^{1}\ar[d]^{\sigma_{K^{1}}}\ar[r]&P^{1} \ar[d]^{\cong}\ar[r]&K^{2}\ar[r]\ar[d]^{\sigma_{K^{2}}}&0\\
0\ar[r]&((K^{1},T),T)\ar[r]&((P^{1},T),T)\ar[r]&((K^{2},T),T).}$$
This implies that both $\sigma_{K^{1}}$ and $\sigma_{M}$ are injective, and therefore $\sigma_{M}$ is an isomorphism by the snake lemma. Similarly, we can show that $\sigma_{K^{i}}:K^{i}\to((K^{i},T),T)$ is an isomorphism for any $i\in{\mathbb{Z}}$. Thus
$\Ext^{i}_{B}(\Hom_{A}(M,T),T)=0$ for all $i\geqslant1$. By Lemma \ref{lemma:3.1}, $M\in{{\rm cogen}^{*}(_{A}T)}$.
Since $M\in{{^{\perp}(_{A}T)}}$, we have $M\in{\mathcal{W}{(_{A}T)}}$.

(3) It suffices to show ${^{\perp}(_{A}T)}\cap{\mathcal{GP}^{\leqslant n }(A)}\subseteq\mathcal{W}{(_{A}T)}$.
Let $M\in{^{\perp}(_{A}T)}\cap{\mathcal{GP}^{\leqslant n }(A)}$. By  \cite[Lemma 2.17]{cfh}, there exists an exact sequence $0\to M\to H\to G\to 0$ in $A$-mod with {\rm proj.dim$(_{A}H)\leqslant n$} and $G\in{\mathcal{GP}(A)}$. Since $G, M\in{^{\perp}(_{A}T)}$,  we have $H\in {^{\perp}(_{A}T)}$. This forces   $H\in{\mathcal{W}{(_{A}T)}}$ by $(1)$. Since $G\in{\mathcal{W}{(_{A}T)}}$ by (2), $M\in{\mathcal{W}{(_{A}T)}}$ by Lemma \ref{proposition:3.2}.

(4) Let $M\in\mathcal{W}{(_{A}T)}$. There exists a long exact sequence
$0\to M\to X_{0}\to X_{1}\to \cdots$
in $A$-mod with $X_{i}\in{\textrm{add}(_{A}T)}$ for all $i\geqslant0$ such that it stays exact after applying $\Hom_{A}(-,T)$. Since proj.dim$(_{A}T)=\ell$, we see from the horseshoe lemma that there exists an exact sequence in $A\modcat$
$$0\to \Omega^{\ell}_{A}(M)\to Q_{0}\to Q_{1}\to \cdots,$$
where $Q_{i}$ is projective for each $i\geqslant0$. Recall that there exists an exact sequence of $A$-modules
$0\to {_{A}A}\to T_{0}\to\cdots\to T_{\ell}\to 0$
with $T_{i}\in{\textrm{add}(_{A}T)}$ for all $0\leqslant i\leqslant \ell$. Since $M\in{^{\perp}(_{A}T)}$, there are isomorphisms $\Ext_{A}^{i}(\Omega^{\ell}_{A}(M),A)\cong\Ext_{A}^{\ell+i}(M,A)\cong\Ext_{A}^{i}(M,T_{\ell})=0$
for all $i\geqslant1$. This implies $M\in{\mathcal{GP}^{\leqslant \ell}(A)}$.
\end{proof}

The following corollary is an immediate consequence of Lemma \ref{prop:3.3}.

\begin{cor}\label{cor:3.5} Let $_{A}T$ be a Wakamatsu tilting module. Then:

(1) $^{\perp}(_{A}T)\cap{\mathcal{P}^{<\infty}(A)}=
\mathcal{W}{(_{A}T)}\cap{\mathcal{P}^{<\infty}(A)}$.

(2) If $_{A}T\in{\mathcal{P}^{<\infty}(A)}$, then
 $\mathcal{W}{(_{A}T)}\cap{\mathcal{GP}^{<\infty }(A)}={^{\perp}(_{A}T)}\cap{\mathcal{GP}^{<\infty}(A)}$.

(3) If $_{A}T$ is a tilting module of projective dimension $\ell$, then
$$\mathcal{W}{(_{A}T)}={^{\perp}(_{A}T)}\cap{\mathcal{GP}^{\leqslant \ell }(A)}={^{\perp}(_{A}T)}\cap{\mathcal{GP}^{<\infty}(A)},$$
$$\mathcal{W}{(T_B)}={^{\perp}(T_{B})}\cap{\mathcal{GP}^{\leqslant \ell}(B^{op})}={^{\perp}(T_{B})}\cap{\mathcal{GP}^{<\infty}(B^{op})}.$$
\end{cor}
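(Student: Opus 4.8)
The plan is to obtain all three items by passing to the limit $n\to\infty$ (for (1) and (2)) or by specializing $n=\ell$ (for (3)) in Lemma \ref{prop:3.3}, together with the observation that once ${_A}T$ has finite projective dimension $\ell$, the resolution dimension with respect to either $\mathcal{P}(A)$ or $\mathcal{GP}(A)$ is automatically bounded by $\ell$ on the relevant subcategories. First I would note that for \emph{any} full subcategory statement of the form $\mathcal{W}{(_AT)}\cap\mathcal{X}^{\leqslant n}(A)={^{\perp}(_AT)}\cap\mathcal{X}^{\leqslant n}(A)$ holding for all $n\geqslant 0$, taking the union over $n$ immediately gives $\mathcal{W}{(_AT)}\cap\mathcal{X}^{<\infty}(A)={^{\perp}(_AT)}\cap\mathcal{X}^{<\infty}(A)$, since $\mathcal{X}^{<\infty}(A)=\bigcup_{n\geqslant 0}\mathcal{X}^{\leqslant n}(A)$. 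Applying this with $\mathcal{X}=\mathcal{P}(A)$ to Lemma \ref{prop:3.3}(1) yields (1); applying it with $\mathcal{X}=\mathcal{GP}(A)$ to Lemma \ref{prop:3.3}(3), under the hypothesis ${_A}T\in\mathcal{P}^{<\infty}(A)$, yields (2).

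For (3), suppose ${_A}T$ is a tilting module of projective dimension $\ell$. The first equality $\mathcal{W}{(_AT)}={^{\perp}(_AT)}\cap\mathcal{GP}^{\leqslant\ell}(A)$ combines Lemma \ref{prop:3.3}(4), which gives the inclusion $\mathcal{W}{(_AT)}\subseteq\mathcal{GP}^{\leqslant\ell}(A)$ (and $\mathcal{W}{(_AT)}\subseteq{^{\perp}(_AT)}$ is immediate from the definition), with Lemma \ref{prop:3.3}(3) applied at $n=\ell$ (whose hypothesis ${_A}T\in\mathcal{P}^{<\infty}(A)$ holds since $T$ is tilting), which gives the reverse inclusion ${^{\perp}(_AT)}\cap\mathcal{GP}^{\leqslant\ell}(A)\subseteq\mathcal{W}{(_AT)}$. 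For the second equality $\mathcal{W}{(_AT)}={^{\perp}(_AT)}\cap\mathcal{GP}^{<\infty}(A)$: the inclusion ``$\subseteq$'' is clear from the first equality since $\mathcal{GP}^{\leqslant\ell}(A)\subseteq\mathcal{GP}^{<\infty}(A)$; conversely, if $M\in{^{\perp}(_AT)}\cap\mathcal{GP}^{<\infty}(A)$, then by part (2) we have $M\in\mathcal{W}{(_AT)}$, and then Lemma \ref{prop:3.3}(4) forces $M\in\mathcal{GP}^{\leqslant\ell}(A)$, so the three sets in the second line of (3) coincide. The statements for $T_B$ follow by symmetry, since $T_B$ is again a Wakamatsu tilting module, indeed a tilting module of the same... well, of its own projective dimension; one must be slightly careful that $\mathrm{proj.dim}(T_B)$ need not equal $\ell$, but the displayed equalities for $T_B$ are stated with the \emph{same} $\ell=\mathrm{proj.dim}({_A}T)$, so I would instead argue as follows: by Lemma \ref{prop:3.3}(4) applied to $T_B$ we get $\mathcal{W}{(T_B)}\subseteq\mathcal{GP}^{\leqslant\ell'}(B^{op})$ where $\ell'=\mathrm{proj.dim}(T_B)$, and by the duality $(\diamondsuit)$ between $\mathcal{W}{(_AT)}$ and $\mathcal{W}{(T_B)}$ the finiteness and the $\mathrm{Ext}$-vanishing transfer, so in fact $\ell'\leqslant\ell$ and symmetrically $\ell\leqslant\ell'$, giving $\ell=\ell'$; then repeat the argument of the first paragraph on the $B^{op}$ side.

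The only genuine subtlety I anticipate is the claim $\mathrm{proj.dim}({_A}T)=\mathrm{proj.dim}(T_B)$ needed to make the second displayed line of (3) literally correct; it is a standard fact for tilting bimodules (e.g. it follows from Miyashita's theory, or from the derived equivalence $\mathrm{R}\Hom_A(T,-)\colon\mathscr{D}^b(A\modcat)\simeq\mathscr{D}^b(B^{op}\modcat)$ restricting appropriately), so I would either cite it or give the two-line symmetric transfer argument sketched above. Everything else is purely formal bookkeeping: unions of nested subcategories and specialization of the index $n$ in Lemma \ref{prop:3.3}. Hence this corollary really is ``an immediate consequence'' as claimed, and the write-up should be short.
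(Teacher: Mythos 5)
Your proposal is correct and matches the paper's intended (and completely unstated) derivation: parts (1) and (2) follow by taking the union over $n$ in Lemma \ref{prop:3.3}(1) and (3), and part (3) combines Lemma \ref{prop:3.3}(3) at $n=\ell$ with Lemma \ref{prop:3.3}(4) and the already-established part (2). The subtlety you flag, that $\mathrm{proj.dim}({_A}T)=\mathrm{proj.dim}(T_B)$ for a tilting bimodule, is indeed a standard fact from Miyashita's theory that the paper uses tacitly (for instance in the sentence following Theorem \ref{thm:2.4}), so your instinct to either cite it or note it is sound.
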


\subsection{Proofs of Theorem \ref{THM} and Corollary \ref{GV}}

We begin this subsection with a proof of Theorem \ref{THM}(1).

{\bf Proof of Theorem \ref{THM}(1)}.  Recall from  \cite[Theorem 23.5]{Anderson} that, given inverse (not necessarily resolving) dualities $F:\mathcal{C}\to\mathcal{D}$ and $G:\mathcal{D}\to\mathcal{C}$ with ${_A}A\in\mathcal{C}$ and $B_B\in\mathcal{D}$, there exists a faithfully balanced bimodule $_{A}T_{B}$ with $T_{B}\cong{F(_{A}A)}\in{\mathcal{D}}$ and $_{A}T\cong{G(B_{B})}\in{\mathcal{C}}$ such that $F\cong\Hom_{A}(-,T)|_{\mathcal{C}}$ and  $G\cong\Hom_{B^{op}}(-,T)|_{\mathcal{D}}$. Now, assume that $F$ and $G$ are resolving dualities. They are exact functors between exact categories $\mathcal{C}$ and $\mathcal{D}$. Let $\delta: 0\to T\to X\to M\to0$ be an exact sequence in $A$-mod with $M\in{\mathcal{C}}$. Since $\mathcal{C}$ is closed under extensions in $A$-mod,  $X$ belongs to $\mathcal{C}$. As $F$ is exact, the sequence $F(\delta): 0\to F(M)\to F(X)\to F(T)\to 0$ is exact. It follows from $F(T)\cong {FG(B_{B})}\cong{B_{B}}$ that $F(\delta)$ splits in $B^{op}\modcat$. Note that $GF$ is naturally isomorphic to the identity functor of $\mathcal{C}$. Thus $\delta$ splits in $A$-mod. This implies that $\Ext_{A}^{1}(M,T)=0$ for all $M\in{\mathcal{C}}$. Since $\mathcal{C}\subseteq A\modcat$ is a resolving subcategory, $\Omega^{i}_{A}(M)\in{\mathcal{C}}$ for all $M\in{\mathcal{C}}$ and $i\geqslant1$. Consequently, $\Ext_{A}^{i}(M,T)\cong{\Ext_{A}^{1}(\Omega^{i-1}_{A}(M),T)=0}$. This yields $\mathcal{C}\subseteq{^{\perp}(_{A}T)}$. In particular, $_AT\in{^{\perp}(_{A}T)}$. Similarly, one can prove $\mathcal{D}\subseteq{^{\perp}(T_{B})}$, which gives rise to $T_B\in {^{\perp}(T_{B})}$. Since ${_A}T{_B}$ is faithfully balanced, it is Wakamatsu tilting.

To prove (c), it suffices to show  the inclusion $\mathcal{C}\subseteq{\mathcal{W}{(_{A}T)}}$, while the inclusion $\mathcal{D}\subseteq{\mathcal{W}{(T_{B})}}$ can be shown dually.

In fact, by $\mathcal{C}\subseteq{^{\perp}(_{A}T)}$,  we only need to show $\mathcal{C}\subseteq$ cogen$^{*}(_{A}T)$.
For each $X\in \mathcal{C}$, let
$$\cdots\to Q_{1}\s{g_{1}}\to Q_{0}\s{g_{0}}\to F(X)\to 0$$
be a projective resolution of $F(X)$ in $B^{op}\modcat$. Clearly, $F(X)\in{\mathcal{D}}$ and $Q_{i}\in{\mathcal{D}}$ for all $i\geqslant0$. Since $\mathcal{D}$ is a resolving subcategory of $B^{op}$-mod, $\textrm{Ker}(g_{i})\in{\mathcal{D}}$ for all $i\geqslant0$. By the exactness of $G$, we obtain an exact sequence of $A$-modules $0\to X\to G(Q_{0})\to G(Q_{1})\to \cdots$
in which $G(Q_{i})\in{\textrm{add}(G(B_{B}))=\textrm{add}(_{A}T)}$ for all $i\geqslant0$.
Thus the exactness of $F$ implies $X\in$ cogen$^{*}(_{A}T)$. This finishes the proof of Theorem \ref{THM}(1). \hfill$\Box$

\medskip
To show Theorem \ref{THM}(2), we first show the following result.

\begin{lem}\label{lem:3.6}  Let $_{A}T_{B}$ be the Wakamatsu tilting bimodule associated with inverse resolving dualities $F:\mathcal{C}\to \mathcal{D}$ and $G:\mathcal{D}\to \mathcal{C}$ in Theorem \ref{THM}(1).
Suppose that $\mathcal{U}\subseteq A\modcat$ and $\mathcal{V}\subseteq B^{op}\modcat$ are resolving subcategories.
Let $m$ and $n$ be natural numbers. Then:

$(1)$  $F$ and $G$ restrict to inverse dualities
    $\mathcal{C}\cap\mathcal{U}^{\leqslant n}(A)\simeq\mathcal{D}\cap\mathcal{V}^{\leqslant m}(B^{op})$
    if and only if the following condition holds:
$$(\ast)\quad  F(\mathcal{C}\cap\mathcal{U})\subseteq{\mathcal{D}\cap\mathcal{V}^{\leqslant m}(B^{op})}\quad\mbox{and}\quad G(\mathcal{D}\cap\mathcal{V})\subseteq{\mathcal{C}\cap\mathcal{U}^{\leqslant n}(A)}.$$

$(2)$ Assume that the condition $(\ast)$ holds. The following statements are true.

\quad\; $(a)$ $\mathcal{C}\cap{\mathcal{U}^{<\infty}(A)}=\mathcal{C}\cap{\mathcal{U}^{\leqslant n}(A)}$. If $\mathcal{U}\subseteq{\mathcal{C}}$, then $\mathcal{C}\cap{\mathcal{U}^{<\infty}(A)}={^{\perp}(_{A}T)\cap{\mathcal{U}^{<\infty}(A)}}.$

\quad\; $(b)$ $\mathcal{D}\cap{\mathcal{V}^{<\infty}(B^{op})}=\mathcal{D}\cap{\mathcal{V}^{\leqslant m}(B^{op})}$. If $\mathcal{V}\subseteq{\mathcal{D}}$, then $\mathcal{D}\cap{\mathcal{V}^{<\infty}(B^{op})}={^{\perp}(T_{B})\cap{\mathcal{V}^{<\infty}(B^{op})}}.$

\quad\; $(c)$ If $\mathcal{U}\subseteq{\mathcal{SGP}(A)}$ and $\mathcal{V}\subseteq{\mathcal{SGP}(B^{op})}$, then $_{A}T_{B}$ is a tilting bimodule.
\end{lem}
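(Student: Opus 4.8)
The plan is to obtain (1) by a short syzygy-and-splicing argument, deduce (2)(a)--(b) from it together with a closure property of $\mathcal{C}$, and isolate (2)(c) as the only step requiring extra homological input.

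For (1), the ``only if'' direction is immediate, since $\mathcal{U}\subseteq\mathcal{U}^{\leqslant n}(A)$ and $\mathcal{V}\subseteq\mathcal{V}^{\leqslant m}(B^{op})$. For ``if'', I first note that $(\ast)$, evaluated on the regular modules, gives $\add({}_AT)\subseteq\mathcal{U}^{\leqslant n}(A)$ and $\add(T_B)\subseteq\mathcal{V}^{\leqslant m}(B^{op})$: indeed ${}_AA\in\mathcal{C}\cap\mathcal{U}$ and $B_B\in\mathcal{D}\cap\mathcal{V}$ because resolving subcategories contain all projectives, so by Theorem \ref{THM}(1)(a) the modules $T_B\cong F({}_AA)$ and ${}_AT\cong G(B_B)$ lie in $\mathcal{V}^{\leqslant m}(B^{op})$ and $\mathcal{U}^{\leqslant n}(A)$, which are resolving and hence closed under direct summands and finite direct sums. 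Now let $M\in\mathcal{C}\cap\mathcal{U}^{\leqslant n}(A)$, choose a projective resolution of $M$ inside the resolving subcategory $\mathcal{C}$, note $\Omega^n_A(M)\in\mathcal{C}\cap\mathcal{U}$ by Lemma \ref{lemma:2.1}, and apply $F\cong\Hom_A(-,T)|_{\mathcal{C}}$, which is exact on $\mathcal{C}$. Splicing the resulting short exact sequences yields an exact sequence $0\to F(M)\to F(P_0)\to\cdots\to F(P_{n-1})\to F(\Omega^n_A(M))\to 0$ whose terms, apart from $F(M)$, all lie in $\mathcal{V}^{\leqslant m}(B^{op})$ (the first ones are in $\add(T_B)$, the last one by $(\ast)$). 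Since a resolving subcategory is closed under the leftmost term of a finite exact sequence whose remaining terms lie in it (a short induction on kernels of epimorphisms), $F(M)\in\mathcal{D}\cap\mathcal{V}^{\leqslant m}(B^{op})$. Dually $G$ maps $\mathcal{D}\cap\mathcal{V}^{\leqslant m}(B^{op})$ into $\mathcal{C}\cap\mathcal{U}^{\leqslant n}(A)$, and since $G\circ F\cong\mathrm{id}$, $F\circ G\cong\mathrm{id}$, these restrictions are inverse dualities.

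For (2)(a): running the splicing argument of (1) while only requiring $\mathrm{res.dim}_\mathcal{U}(M)<\infty$ shows $F(\mathcal{C}\cap\mathcal{U}^{<\infty}(A))\subseteq\mathcal{D}\cap\mathcal{V}^{\leqslant m}(B^{op})$, hence $\mathcal{C}\cap\mathcal{U}^{<\infty}(A)=GF(\mathcal{C}\cap\mathcal{U}^{<\infty}(A))\subseteq G(\mathcal{D}\cap\mathcal{V}^{\leqslant m}(B^{op}))\subseteq\mathcal{C}\cap\mathcal{U}^{\leqslant n}(A)$ by (1), the reverse inclusion being trivial. If moreover $\mathcal{U}\subseteq\mathcal{C}$, then $\mathcal{C}\cap\mathcal{U}^{<\infty}(A)\subseteq{}^{\perp}({}_AT)\cap\mathcal{U}^{<\infty}(A)$ follows from $\mathcal{C}\subseteq{}^{\perp}({}_AT)$ (Theorem \ref{THM}(1)(c)); conversely, take $M\in{}^{\perp}({}_AT)\cap\mathcal{U}^{<\infty}(A)$ and induct on $\mathrm{res.dim}_\mathcal{U}(M)$, the case $0$ being $M\in\mathcal{U}\subseteq\mathcal{C}$. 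For the step, pick $0\to M'\to U_0\to M\to 0$ with $U_0\in\mathcal{U}$ and $\mathrm{res.dim}_\mathcal{U}(M')$ smaller; the long exact $\Ext_A(-,T)$-sequence (with $U_0,M\in{}^{\perp}({}_AT)$) gives $M'\in{}^{\perp}({}_AT)$, hence $M'\in\mathcal{C}$ by induction, and then $M\in\mathcal{C}$ by the following claim: \emph{if $0\to X\to Y\to Z\to 0$ is exact with $X,Y\in\mathcal{C}$ and $Z\in{}^{\perp}({}_AT)$, then $Z\in\mathcal{C}$.} To prove it, apply $\Hom_A(-,T)$; since $\Ext^i_A(Z,T)=0$ for $i\geqslant1$ one gets a short exact sequence $0\to\Hom_A(Z,T)\to\Hom_A(Y,T)\to\Hom_A(X,T)\to 0$ whose outer terms lie in the resolving subcategory $\mathcal{D}$, so $\Hom_A(Z,T)\in\mathcal{D}\subseteq{}^{\perp}(T_B)$; applying $\Hom_{B^{op}}(-,T)$ once more keeps the rows short exact, and since $X,Y\in\mathcal{C}\subseteq\mathrm{cogen}^*({}_AT)$ the evaluation maps $\sigma_X,\sigma_Y$ are isomorphisms (Lemma \ref{lemma:3.1}(2)); the short five lemma makes $\sigma_Z$ an isomorphism, so $Z\cong\Hom_{B^{op}}(\Hom_A(Z,T),T)=G(\Hom_A(Z,T))\in\mathcal{C}$. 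Part (2)(b) is (2)(a) with $A$ and $B^{op}$ interchanged.

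Part (2)(c) is the crux. As in (1), $(\ast)$ on the regular modules gives ${}_AT\cong G(B_B)\in\mathcal{SGP}^{\leqslant n}(A)$ and $T_B\cong F({}_AA)\in\mathcal{SGP}^{\leqslant m}(B^{op})$, i.e.\ $\Ext^i_A(T,A)=0=\Ext^i_{B^{op}}(T_B,B)$ for all large $i$. Because ${}_AT_B$ is Wakamatsu tilting one has $\coresdim_{\add({}_AT)}({}_AA)=\pd(T_B)$ and $\coresdim_{\add(T_B)}(B_B)=\pd({}_AT)$, so it suffices to prove $\pd({}_AT)<\infty$ and $\pd(T_B)<\infty$. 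I would argue: (i) over an Artin algebra a semi-Gorenstein-projective module of finite projective dimension is projective (use a minimal projective resolution: its last differential would become a split monomorphism after applying the duality $\Hom_A(-,A)$ on projectives, contradicting minimality); (ii) using Lemma \ref{lemma:3.1}(3) --- which yields $\Ext^i_A(\Omega^n_A(T),A)\cong\Ext^i_{B^{op}}(T_B,\Hom_A(\Omega^n_A(T),T))$ --- together with the transpose and $T_B\in\mathcal{SGP}^{\leqslant m}(B^{op})$, one upgrades the semi-Gorenstein-projectivity of $\Omega^n_A(T)$ to Gorenstein-projectivity, so ${}_AT\in\mathcal{GP}^{<\infty}(A)$ and likewise $T_B\in\mathcal{GP}^{<\infty}(B^{op})$; (iii) Lemma \ref{prop:3.3}(2) (with the condition $\mathcal{GP}(A)\subseteq\mathcal{W}({}_AT)$ supplied by the duality) then gives ${}_AT\in\mathcal{P}^{<\infty}(A)$ and $T_B\in\mathcal{P}^{<\infty}(B^{op})$, whence ${}_AT_B$ is a tilting bimodule. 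The main obstacle is step (ii): promoting ``finite semi-Gorenstein-projective dimension on both sides'' to ``Gorenstein-projective'' for a Wakamatsu tilting bimodule, where the faithful balancedness and the Hom--Ext comparisons of Lemma \ref{lemma:3.1} have to be combined carefully.
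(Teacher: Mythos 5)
Parts (1) and (2)(a)--(b) of your proposal are correct. Your proof of (1) is essentially the paper's syzygy-and-splicing argument (the paper uses $\Omega^s$ with $s=\max\{m,n\}$, you use $\Omega^n$; either works). For the second claim of (2)(a) the paper applies $G$ to the whole image of the $\mathcal{U}$-resolution and matches it with the original sequence, whereas you reorganize the same comparison into an induction on $\mathrm{res.dim}_{\mathcal U}$ built on the claim that $\mathcal C$ is closed under cokernels along ${}^{\perp}({}_AT)$, proved via the evaluation map $\sigma$ and Lemma~\ref{lemma:3.1}(2). That is a legitimate and arguably cleaner packaging of the same idea.

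Part (2)(c) is where your proposal breaks down, and you flag the weak point yourself. Step~(ii) does not work as stated: putting $N=\Omega^n_A(T)$ and $M={}_AA$ in Lemma~\ref{lemma:3.1}(3) yields $\Ext^i_A(\Omega^n_A(T),A)\cong\Ext^i_{B^{op}}(T_B,\Hom_A(\Omega^n_A(T),T))$, but the left-hand side is already zero for $i\geqslant1$ because $\Omega^n_A(T)\in\mathcal{SGP}(A)$, so the isomorphism gives no new information; Gorenstein-projectivity of $\Omega^n_A(T)$ would require control of the $A^{op}$-dual of a projective resolution (or of the transpose), which the Wakamatsu duality with $T$ does not directly provide. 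Step~(iii) has a second, independent gap: invoking the ``if'' direction of Lemma~\ref{prop:3.3}(2) requires \emph{both} ${}_AT\in\mathcal{GP}^{<\infty}(A)$ \emph{and} $\mathcal{GP}(A)\subseteq\mathcal{W}({}_AT)$; the latter is not supplied by the hypotheses --- Theorem~\ref{THM}(1)(c) only gives $\mathcal C\subseteq\mathcal{W}({}_AT)$, and nothing forces $\mathcal{GP}(A)\subseteq\mathcal C$. In fact $\mathcal{GP}(A)\subseteq\mathcal{W}({}_AT)$ is a \emph{consequence} of $\pd({}_AT)<\infty$, so feeding it into the lemma to \emph{derive} finite projective dimension is circular.

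The paper's proof sidesteps Gorenstein-projectivity altogether and shows $\Omega^s_A(T)$ is projective directly. With $s=\max\{m,n\}$, one has $F(\Omega^{s+1}_A(T))\in\mathcal D\cap\mathcal V^{\leqslant s}(B^{op})$ by $(\ast)$, and applying $F$ to $0\to\Omega^s_A(T)\to P_{s-1}\to\cdots\to P_0\to T\to 0$ produces an $\add(T_B)$-resolution $0\to B\to F(P_0)\to\cdots\to F(P_{s-1})\to F(\Omega^s_A(T))\to 0$. Dimension shifting along this sequence gives
$$\Ext^1_{B^{op}}\bigl(F(\Omega^{s+1}_A(T)),F(\Omega^s_A(T))\bigr)\cong\Ext^1_{B^{op}}\bigl(\Omega^s_{B^{op}}(F(\Omega^{s+1}_A(T))),B\bigr),$$
and the right-hand side vanishes because $\Omega^s_{B^{op}}(F(\Omega^{s+1}_A(T)))\in\mathcal V\subseteq\mathcal{SGP}(B^{op})={}^{\perp}(B_B)$ --- this is precisely where the hypothesis $\mathcal V\subseteq\mathcal{SGP}(B^{op})$ enters. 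Hence the image under $F$ of $0\to\Omega^{s+1}_A(T)\to P_s\to\Omega^s_A(T)\to 0$ splits, $F(\Omega^s_A(T))\in\add(T_B)$, and so $\Omega^s_A(T)\cong GF(\Omega^s_A(T))\in\add({}_AA)$ is projective. This, together with the dual statement, is the argument your proposal is missing; you should replace steps (ii)--(iii) by it.
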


\begin{proof} (1) The necessity of $(1)$ is clear since $\mathcal{U}\subseteq \mathcal{U}^{\leqslant n}(A)$ and  $\mathcal{V}\subseteq \mathcal{V}^{\leqslant m}(B^{op})$. It suffices to show the sufficiency of $(1)$. Assume that the condition $(\ast)$ holds.
Let $M\in\mathcal{C}\cap\, \mathcal{U}^{\leqslant n}(A)$.
We need to show $F(M)\in{\mathcal{D}\cap\mathcal{V}^{\leqslant m}(B^{op})}$.

Let $\cdots\to P_1\to P_0\to M\to 0$ be a projective resolution of $_{A}M$ and let $s:=\textrm{max}\{m,n\}$. Since $\mathcal{C}$ and $\mathcal{U}$ are resolving subcategories of $A\modcat$, $\Omega^{s}_{A}(M)\in{\mathcal{C}\cap\mathcal{U}}$ by Lemma \ref{lemma:2.1}. It follows from $F(\mathcal{C}\cap\mathcal{U})\subseteq{\mathcal{D}\cap\mathcal{V}^{\leqslant m}(B^{op})}$ that $F(\Omega^{s}_{A}(M))\in{\mathcal{D}\cap\mathcal{V}^{\leqslant m}(B^{op})}$.
Since $M\in\mathcal{C}\subseteq{^{\perp}(_{A}T)}$ by Theorem \ref{THM}(1), there is an exact sequence in $B^{\rm op}\modcat$:
$$0\to F(M)\to F(P_0)\to F(P_1)\to\cdots \to F(P_{s-1})\to F(\Omega^{s}_{A}(M))\to 0.$$
Observe that both $\mathcal{D}$ and $\mathcal{V}^{\leqslant m}(B^{op})$ are resolving subcategories of $B^{op}\modcat$, and so is
their intersection ${\mathcal{D}\cap\mathcal{V}^{\leqslant m}(B^{op})}$. Since $F(P_i)\in F(\add(_{A}A))\subseteq F(\mathcal{C}\cap\mathcal{U})$ for all $0\leqslant i\leqslant s-1$, we have $F(M)\in{\mathcal{D}\cap\mathcal{V}^{\leqslant m}(B^{op})}$.

(2) It is enough to prove (a) and (c) since (b) can be shown dually.

(a) To prove $\mathcal{C}\cap{\mathcal{U}^{<\infty}(A)}=\mathcal{C}\cap{\mathcal{U}^{\leqslant n}(A)}$, it suffices to prove  $\mathcal{C}\cap{\mathcal{U}^{<\infty}(A)}\subseteq\mathcal{C}\,\cap{\mathcal{U}^{\leqslant n}(A)}$. Let $X\in{\mathcal{C}\cap{\mathcal{U}^{<\infty}(A)}}$. Then there is a natural number $t$ such that $\Omega^{t}_{A}(X)\in{\mathcal{C}\cap\mathcal{U}}$
by Lemma \ref{lemma:2.1}. It follows from the proof of the sufficiency of $(1)$ that  $F(X)\in{{\mathcal{D}\cap\mathcal{V}^{\leqslant m}(B^{op})}}$. Since $X\cong{GF(X)}$,  we have  $X\in{\mathcal{C}\cap{\mathcal{U}^{\leqslant n}(A)}}$.

Suppose $\mathcal{U}\subseteq{\mathcal{C}}$. By Theorem \ref{THM}(1), we only need to check ${^{\perp}(_{A}T)\cap{\mathcal{U}^{<\infty}(A)}}\subseteq{\mathcal{C}\cap{\mathcal{U}^{<\infty}(A)}}$. Let $Y\in{^{\perp}(_{A}T)\cap{\mathcal{U}^{<\infty}(A)}}$. By Lemma \ref{lemma:2.1}, we have an exact sequence in $A$-mod:
$$0\to U\to Q_{t-1}\to \cdots\to Q_1\to Q_0\to Y\to 0,$$
where $U\in{\mathcal{U}}$ and $Q_i\in\add(_{A}A)$ for all $0\leqslant i\leqslant{t-1}\in\mathbb{N}$. By $Y\in{^{\perp}(_{A}T)}$,  there is an exact sequence in
$B^{op}\modcat$:
$$0\to F(Y)\to F(Q_0)\to F(Q_1)\to \cdots\to  F(Q_{t-1})\to F(U)\to 0.$$
Clearly, $F(Q_{i})\cong\Hom_{A}(Q_{i},T)\in{\textrm{add}(T_{B})}$ for all $0\leqslant i\leqslant{t-1}$. Since  $\mathcal{U}\subseteq{\mathcal{C}}$, the condition $(\ast)$ implies $F(U)\in F(\mathcal{U})\subseteq{{\mathcal{D}\cap\mathcal{V}^{\leqslant m}(B^{op})}}$. Similarly, we can show $F(Y)\in{{\mathcal{D}\cap\mathcal{V}^{\leqslant m}(B^{op})}}$.
By the exactness of $G$, the following sequence
$$0\to GF(U)\to GF(Q_{t-1})\to \cdots\to GF(Q_1)\to GF(Q_0)\to GF(Y)\to 0$$
is exact in $A\modcat$. Recall that $GF$ is naturally isomorphic to the identity functor of $\mathcal{C}$.
Since $U\in\mathcal{U}\subseteq\mathcal{C}$ and $P_i\in\mathcal{C}$ for all $i$,  we have $Y\cong GF(Y)\in G(\mathcal{D})\subseteq{{\mathcal{C}}}$.

(c) Since $_{A}T\cong G(B_{B})$ by Theorem \ref{THM}(1), we obtain $_{A}T\in{\mathcal{C}\cap\mathcal{U}^{\leqslant{s}}(A)}$.  As $\mathcal{C}$ and $\mathcal{U}$ are resolving subcategories of $A\modcat$, it follows from Lemma \ref{lemma:2.1} that $\Omega^{i}_{A}(T)\in{\mathcal{C}\cap\mathcal{U}}$ for all $i\geqslant s$. In particular, $\Omega^{s+1}_{A}(T)\in {\mathcal{C}\cap\mathcal{U}}$. By  $(\ast)$, $F(\Omega^{s+1}_{A}(T))\in{\mathcal{D}\cap\mathcal{V}^{\leqslant s}(B^{op})}$.
Moreover, from the minimal projective resolution $\cdots\to P_2\to P_1\to P_0\to {_A}T\to 0$ of the module ${_A}T$, we obtain an exact sequence
$0\to B\to F(P_{0})\to  F(P_{1})\to F(P_2)\to \cdots$ in $B^{op}\modcat$, where $F(P_{i})\cong\Hom_{A}(P_{i},T)\in{\textrm{add}(T_{B})}$ for all $i\geqslant0$. Since $\mathcal{D}\subseteq {^{\perp}(T_{B})}$ by Theorem \ref{THM}(1)(c) and $F(\Omega^{s+1}_{A}(T))\in\mathcal{D}$, it is clear that
$\Ext_{B\opp}^{j}(F(\Omega^{s+1}_{A}(T)),F(P_{i}))=0$ for all $j\geqslant1$ and $i\geqslant0$.
Consequently, there are isomorphisms
$$\Ext_{B\opp}^{1}\big(F(\Omega^{s+1}_{A}(T)),F(\Omega^{s}_{A}(T))\big)\cong
\Ext_{B\opp}^{s+1}\big(F(\Omega^{s+1}_{A}(T)),B\big)\cong\Ext_{B\opp}^1\big(\Omega_{B^{op}}^s(F(\Omega^{s+1}_{A}(T))),B\big).$$
By Lemma \ref{lemma:2.1}, $\Omega_{B^{op}}^s(F(\Omega^{s+1}_{A}(T)))\in\mathcal{V}$. Since $\mathcal{V}\subseteq{\mathcal{SGP}(B^{op})}={^{\perp}(B_B)}$ by assumption,  $\Ext_{B\opp}^1\big(\Omega_{B^{op}}^s(F(\Omega^{s+1}_{A}(T))),B\big)=0$, and further $\Ext_{B\opp}^{1}\big(F(\Omega^{s+1}_{A}(T)),F(\Omega^{s}_{A}(T))\big)=0$
which leads to $F(\Omega^{s}_{A}(T))\in{\textrm{add}(T_{B})}$. Since $\Omega^{s}_{A}(T)\in \mathcal{C}$, we have $\Omega^{s}_{A}(T)\cong GF(\Omega^{s}_{A}(T))\in\add(_{A}A)$. This means that $\pd(_AT)\leqslant s$. Similarly, we can show $\pd(T_{B})\leqslant s$. Thus ${_A}T{_B}$ is tilting.
\end{proof}

{\bf Proof of Theorem \ref{THM}(2).}
The sufficiency of $(2)$ is a direct consequence of Lemma \ref{lem:3.6}.  Now, assume that $_{A}T_{B}$ is a tilting bimodule with
$\ell=\pd(_AT)$. By Theorem \ref{THM}(1) and Corollary \ref{cor:3.5}(3), $\mathcal{C}\subseteq\mathcal{W}({_A}T)\subseteq\mathcal{GP}^{\leqslant \ell}(A)$ and $\mathcal{D}\subseteq\mathcal{W}(T_B)\subseteq \mathcal{GP}^{\leqslant \ell}(B^{op})$.
It follows that $\mathcal{C}\cap\mathcal{GP}^{\leqslant \ell}(A)=\mathcal{C}$ and $\mathcal{D}\cap\mathcal{GP}^{\leqslant \ell}(B^{op}) =\mathcal{D}$.
Since $\mathcal{GP}^{\leqslant \ell}(A)\subseteq\mathcal{SGP}^{\leqslant \ell}(A)$, we also have $\mathcal{C}\cap\mathcal{SGP}^{\leqslant \ell}(A)=\mathcal{C}$ and $\mathcal{D}\cap\mathcal{SGP}^{\leqslant \ell}(B^{op})=\mathcal{D}$.
Thus the assertions on (semi-)Gorenstein-projective modules in the necessity of $(2)$ automatically hold.

Let $\mathcal{U}:=\add(_AA)$ and $\mathcal{V}:=\add(B_B)$.
Since $\mathcal{C}\subseteq A\modcat$ is a resolving subcategory, we see that
$F(M)\in{\textrm{add}(G(B_{B}))=\textrm{add}(_{A}T)}$ for any $M\in{\mathcal{U}\cap\mathcal{C}}$. This forces $F(M)\in{\mathcal{D}\cap\mathcal{V}^{\leqslant \ell}}(B^{op})$.  Dually, $G(N)\in{\mathcal{C}\cap\mathcal{U}^{\leqslant \ell}}(A)$ for any $N\in{\mathcal{V}\cap\mathcal{D}}$. By Lemma \ref{lem:3.6}(1),  $F$ and $G$ can be restricted to inverse resolving dualities
$\mathcal{C}\cap{\mathcal{P}^{\leqslant \ell}(A)}\simeq \mathcal{D}\cap{\mathcal{P}^{\leqslant \ell}(B^{op})}$.
Further, by Lemma \ref{lem:3.6}(2), $\mathcal{C}\cap{\mathcal{P}^{<\infty}(A)}=\mathcal{C}\cap{\mathcal{P}^{\leqslant \ell}(A)}$ and $\mathcal{D}\cap{\mathcal{P}^{\leqslant \ell}(B^{op})}= \mathcal{D}\cap{\mathcal{P}^{<\infty}(B^{op})}$. This finishes the proof of $(2)$.\hfill$\Box$

\smallskip
{\bf Proof of Corollary \ref{GV}.}
(1) follows from Corollary \ref{cor:3.5}(3) and the resolving dualities $\mathcal{W}{(_{A}T)}\simeq\mathcal{W}{(T_{B})}$ by \cite[Theorem 1.5(d)]{GRS}.

(2)  Suppose {$\mathcal{C}\subseteq {\mathcal{GP}^{\leqslant n }(A)}$ and $\mathcal{D}\subseteq {\mathcal{GP}^{\leqslant m}(B^{op})}$} for some $n, m\in\mathbb{N}$. Then $\mathcal{C}\cap\mathcal{GP}^{\leqslant n}(A)=\mathcal{C}$ and $\mathcal{D}\cap\mathcal{GP}^{\leqslant m}(B^{op}) =\mathcal{D}$. By Theorem \ref{THM}(2), ${_A}T_{B}$ is a tilting bimodule.
In Lemma \ref{lem:3.6}, we can take $\mathcal{U}:=\mathcal{GP}(A)$ and  $\mathcal{V}:=\mathcal{GP}(B^{op})$, and then
the condition $(\ast)$ is satisfied. Further, assume {$\mathcal{GP}(A)\subseteq\mathcal{C}$ and $\mathcal{GP}(B^{op})\subseteq\mathcal{D}$. By Lemma \ref{lem:3.6}(2), $\mathcal{C}={^{\perp}(_{A}T)\cap{\mathcal{GP}^{<\infty}(A)}}$ and $\mathcal{D}={^{\perp}(_{A}T)\cap{\mathcal{GP}^{<\infty}(B^{op})}}$.
Since $T$ is tilting, we see from Corollary \ref{cor:3.5}(3) that  $\mathcal{C}={^{\perp}(_{A}T)}\cap{\mathcal{GP}^{\leqslant \ell}(A)}$ and $\mathcal{D}={^{\perp}(T_{B})}\cap{\mathcal{GP}^{\leqslant \ell}(B^{op})}$. \hfill$\Box$

\medskip
Finally, we provide several equivalent characterizations for Wakamatsu tilting bimodules to be tilting. This may be helpful for understanding the Wakamatsu tilting conjecture from the viewpoint of resolving dualities.

\begin{cor}\label{cor:3.8} A Wakamatsu tilting bimodule $_{A}T_{B}$ is tilting if and only if
the functors $\Hom_{A}(-,T)$ and $\Hom_{B^{op}}(-,T)$ can be restricted to any one of inverse resolving dualities of the following types:

\begin{enumerate}
\item $\mathcal{W}{(_{A}T)}\cap{\mathcal{P}^{< \infty }(A)}\simeq \mathcal{W}{(T_{B})}\cap{\mathcal{P}^{<\infty}(B^{op})};$

\item  $^{\perp}(_{A}T)\cap{\mathcal{P}^{< \infty }(A)}\simeq {^{\perp}(T_{B})\cap{\mathcal{P}^{<\infty}(B^{op})}};$

\item $\mathcal{W}{(_{A}T)}\cap{\mathcal{GP}^{\leqslant n}(A)}\simeq \mathcal{W}{(T_{B})}\cap{\mathcal{GP}^{\leqslant m}(B^{op})}$
for some $n, m\in\mathbb{N}$;

\item $^{\perp}(_{A}T)\cap{\mathcal{GP}^{\leqslant n }(A)}\simeq {^{\perp}(T_{B})\cap{\mathcal{GP}^{\leqslant m}(B^{op})}}$
for some $n, m\in\mathbb{N}$;

\item $\mathcal{W}{(_{A}T)}\cap{\mathcal{SGP}^{\leqslant n}(A)}\simeq \mathcal{W}{(T_{B})}\cap{\mathcal{SGP}^{\leqslant m}(B^{op})}$
for some $n, m\in\mathbb{N}$.
\end{enumerate}
\end{cor}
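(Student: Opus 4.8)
The plan is to deduce this corollary directly from Theorem \ref{THM} and Corollary \ref{cor:3.5}, treating the Wakamatsu tilting bimodule $_AT_B$ itself as the input data and using the "maximal duality" $(\diamondsuit)$ as the ambient resolving dualities to which Theorem \ref{THM}(2) will be applied. First I would record the starting point: by \cite[Theorem 1.5(d)]{GRS}, the functors $\Hom_A(-,T)$ and $\Hom_{B^{op}}(-,T)$ restrict to inverse resolving dualities $\mathcal W(_AT)\simeq\mathcal W(T_B)$, so we may take $\mathcal C:=\mathcal W(_AT)$ and $\mathcal D:=\mathcal W(T_B)$ in Theorem \ref{THM}; note that the Wakamatsu tilting bimodule produced by Theorem \ref{THM}(1) for these $\mathcal C,\mathcal D$ is (isomorphic to) $_AT_B$ itself, since $F(_AA)\cong\Hom_A(A,T)\cong T_B$ and $G(B_B)\cong{_AT}$. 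This is the key identification that lets all five displayed dualities be interpreted as restrictions of $(\diamondsuit)$.

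For the forward direction, suppose $_AT_B$ is tilting with $\ell=\pd(_AT)$. Then Corollary \ref{cor:3.5}(3) gives $\mathcal W(_AT)={^{\perp}(_AT)}\cap\mathcal{GP}^{\leqslant\ell}(A)={^{\perp}(_AT)}\cap\mathcal{GP}^{<\infty}(A)$ and similarly on the $B$-side. Intersecting these with $\mathcal P^{<\infty}$ and using that $\mathcal P^{<\infty}\subseteq\mathcal{GP}^{<\infty}$ yields $\mathcal W(_AT)\cap\mathcal P^{<\infty}(A)={^{\perp}(_AT)}\cap\mathcal P^{<\infty}(A)$, handling (1) and (2); and Lemma \ref{prop:3.3}(3) together with Corollary \ref{cor:3.5}(3) handles (3), (4) and (5) for all $n,m\geqslant\ell$ (and in fact for all $n,m$, since $\mathcal W(_AT)\subseteq\mathcal{GP}^{\leqslant\ell}(A)\subseteq\mathcal{SGP}^{\leqslant\ell}(A)$). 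That each of these is an \emph{inverse resolving duality} is then immediate from the restriction of $(\diamondsuit)$, exactly as in the proof of Theorem \ref{THM}(2).

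For the converse, I would feed each of the five dualities into Lemma \ref{lem:3.6} with appropriate choices of resolving subcategories $\mathcal U,\mathcal V$ and appeal to part (c) of that lemma, which concludes that $_AT_B$ is tilting once $\mathcal U\subseteq\mathcal{SGP}(A)$ and $\mathcal V\subseteq\mathcal{SGP}(B^{op})$. Concretely: for (5) take $\mathcal U:=\mathcal{SGP}(A)$, $\mathcal V:=\mathcal{SGP}(B^{op})$, so the given duality is precisely $\mathcal C\cap\mathcal U^{\leqslant n}(A)\simeq\mathcal D\cap\mathcal V^{\leqslant m}(B^{op})$ and condition $(\ast)$ holds by Lemma \ref{lem:3.6}(1), whence $_AT_B$ is tilting by (c). For (3) take $\mathcal U:=\mathcal{GP}(A)$, $\mathcal V:=\mathcal{GP}(B^{op})$ and observe $\mathcal{GP}\subseteq\mathcal{SGP}$, so (c) again applies. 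For (4) the only extra point is that $^{\perp}(_AT)\cap\mathcal{GP}^{\leqslant n}(A)$ contains $\mathcal W(_AT)\cap\mathcal{GP}^{\leqslant n}(A)$, but in fact the reverse inclusion—and hence equality—will follow once $_AT$ is shown to have finite projective dimension, so I would first note that the hypothesis forces $_AT=G(B_B)$ to lie in the domain category and then run the Lemma \ref{lem:3.6}(c) argument with $\mathcal U=\mathcal{GP}(A)$; alternatively, (4) reduces to (3) via Lemma \ref{prop:3.3}(1) applied after the fact. Cases (1) and (2) are the $n=m=0$-with-$\mathcal P$ analogues: take $\mathcal U:=\add(_AA)$, $\mathcal V:=\add(B_B)$, so $\mathcal U^{\leqslant n}(A)=\mathcal P^{\leqslant n}(A)$ and the two given categories equal $\mathcal C\cap\mathcal P^{<\infty}(A)\simeq\mathcal D\cap\mathcal P^{<\infty}(B^{op})$ after invoking Corollary \ref{cor:3.5}(1); then condition $(\ast)$ is checked exactly as in the proof of Theorem \ref{THM}(2), and Lemma \ref{lem:3.6}(2)(a) converts $\mathcal P^{<\infty}$ into $\mathcal P^{\leqslant s}$, after which the argument of Lemma \ref{lem:3.6}(c) with $\mathcal U=\add(_AA)\subseteq\mathcal{SGP}(A)$ applies and gives $\pd(_AT)<\infty$, i.e. $_AT_B$ is tilting.

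The main obstacle I anticipate is the bookkeeping in cases (1) and (2): there the hypothesis is a duality between subcategories of $\mathcal P^{<\infty}$, not of $\mathcal{GP}^{\leqslant n}$, so one cannot directly quote Lemma \ref{lem:3.6}(c) with a Gorenstein-projective $\mathcal U$; instead one must first verify that the given restricted duality forces the condition $(\ast)$ for $\mathcal U=\add(_AA)$ (equivalently, that $F$ and $G$ send modules of finite projective dimension to modules of finite projective dimension, which is where $\mathcal C\subseteq\mathcal W(_AT)$ and the characterization of $\mathcal P^{<\infty}$ inside $\mathcal W(_AT)$ from Corollary \ref{cor:3.5}(1) enter), and only then deduce $\pd(_AT)<\infty$. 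Everything else is a routine unwinding of the already-proved Theorem \ref{THM}(2) together with the structural identities in Lemma \ref{prop:3.3} and Corollary \ref{cor:3.5}.
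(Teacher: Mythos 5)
Your proposal is essentially the paper's proof, just unwound one level: you take $\mathcal C=\mathcal W(_{A}T)$, $\mathcal D=\mathcal W(T_B)$ with the dualities $(\diamondsuit)$, observe that the Wakamatsu tilting bimodule associated to them via Theorem \ref{THM}(1) is $T$ itself, and then re-run the engine (Lemma \ref{lem:3.6}) rather than citing both directions of Theorem \ref{THM}(2) directly; the translations between $\mathcal W$ and ${}^\perp$ are, as in the paper, Lemma \ref{prop:3.3} and Corollary \ref{cor:3.5}. Two details are off. First, your ``alternatively, (4) reduces to (3) via Lemma \ref{prop:3.3}(1)'' does not work: Lemma \ref{prop:3.3}(1) concerns $\mathcal P^{\leqslant n}$, not $\mathcal{GP}^{\leqslant n}$, and the relevant Lemma \ref{prop:3.3}(3) presupposes $\pd(_{A}T)<\infty$, which is what you are trying to prove. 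The paper's reduction for (4) is different and shorter: ${}^\perp(_{A}T)\cap\mathcal{GP}^{\leqslant n}(A)$ is itself a resolving subcategory carrying the given resolving duality, so Theorem \ref{THM}(1)(c) forces it to lie inside $\mathcal W(_{A}T)$; hence it equals $\mathcal W(_{A}T)\cap\mathcal{GP}^{\leqslant n}(A)$ and one returns to case (3). Your primary route for (4), checking $(\ast)$ with $\mathcal U=\mathcal{GP}(A)$, does also work, but you should spell out why $F$ maps $\mathcal W(_{A}T)\cap\mathcal{GP}(A)$ into $\mathcal W(T_B)\cap\mathcal{GP}^{\leqslant m}(B^{op})$: any object of the source lies both in the domain of the hypothesized duality (4) and in $\mathcal W(_{A}T)$, so its image lies simultaneously in ${}^\perp(T_B)\cap\mathcal{GP}^{\leqslant m}(B^{op})$ and in $\mathcal W(T_B)$.

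Second, in cases (1) and (2) the logic is inverted. Lemma \ref{lem:3.6}(2)(a), which converts $\mathcal P^{<\infty}$ into $\mathcal P^{\leqslant s}$, is a \emph{consequence} of $(\ast)$, not a tool for establishing it; and ``checking $(\ast)$ exactly as in the proof of Theorem \ref{THM}(2)'' cannot be right, because that verification in the proof of Theorem \ref{THM}(2) occurs in the necessity direction, where $T$ is already assumed tilting. To obtain $(\ast)$ with $\mathcal U=\add(_{A}A)$ and a fixed $m$, you must first produce a finite bound: since $_{A}A$ lies in the domain of the hypothesized duality, $T_B\cong F(_{A}A)$ lies in $\mathcal D\cap\mathcal P^{<\infty}(B^{op})$, so $m:=\pd(T_B)<\infty$, whence $F(P)\in\add(T_B)\subseteq\mathcal P^{\leqslant m}(B^{op})$ for every projective $_{A}P$, which is exactly $(\ast)$; only then does Lemma \ref{lem:3.6}(2)(c) apply. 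Finally, your parenthetical ``and in fact for all $n,m$'' in the forward direction is an unneeded overclaim: for $n<\ell$ it is not clear the dualities restrict, but the statement only asks for \emph{some} $n,m$, and $n=m=\ell$ suffices.
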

\begin{proof}
Recall that the functors $F:=\Hom_{A}(-,T)$ and $G:=\Hom_{B^{op}}(-,T)$ between $A\modcat$ and $B^{op}\modcat$ can be restricted to inverse resolving dualities $\mathcal{W}{(_{A}T)}\simeq\mathcal{W}{(T_{B})}$ by \cite[Theorem 1.5(d)]{GRS}. So, the necessity of Corollary \ref{cor:3.8} follows from Theorem \ref{THM}(2) and Lemma \ref{prop:3.3}(1)(3). As to the sufficiency of Corollary \ref{cor:3.8}, all the cases except $(4)$ are direct consequences of Theorem \ref{THM}(2) and Lemma \ref{prop:3.3}(1). It remains to show the case $(4)$. Suppose that $F$ and $G$ are restricted to the inverse resolving dualities $^{\perp}(_{A}T)\cap{\mathcal{GP}^{\leqslant n }(A)}\simeq {^{\perp}(T_{B})\cap{\mathcal{GP}^{\leqslant m}(B^{op})}}$ for some $n, m\in\mathbb{N}$. By Theorem \ref{THM}(1), $^{\perp}(_{A}T)\cap{\mathcal{GP}^{\leqslant n }(A)}\subseteq{\mathcal{W}{(_{A}T)}}$ and
${^{\perp}(T_{B})\cap{\mathcal{GP}^{\leqslant m}(B^{op})}}\subseteq{\mathcal{W}{(T_{B})}}$.
This implies that $^{\perp}(_{A}T)\cap{\mathcal{GP}^{\leqslant n }(A)}=\mathcal{W}(_{A}T)\cap{\mathcal{GP}^{\leqslant n}(A)}$ and $^{\perp}(T_{B})\cap{\mathcal{GP}^{\leqslant n }(B^{op})}=\mathcal{W}(T_{B})\cap{\mathcal{GP}^{\leqslant n}(B^{op})}$, and then we return to the case $(3)$.\end{proof}

\begin{rem}\label{generalization}
In Theorem \ref{THM}, if $\mathcal{C}=A\modcat$ and $\mathcal{D}=B^{op}\modcat$, then ${_A}T$ and $T_B$ are injective cogenerators since  $A\modcat={\mathcal{W}{(_{A}T)}}$ and  $B^{op}\modcat={\mathcal{W}{(T_{B})}}$.
By Corollary \ref{cor:3.8}(2), we obtain Miyachita's duality (see Theorem \ref{thm:2.3}).
Further, if $F:\mathcal{C}\to \mathcal{D}$ is a resolving duality with $\mathcal{C} \subseteq \mathcal{P}^{<\infty}(A)$ and $\mathcal{D} \subseteq \mathcal{P}^{<\infty}(B^{op})$, then the conditions $(a)$ and $(b)$ in Huisgen-Zimmermann's correspondence (see Theorem \ref{thm:2.4}) also follow from Theorem \ref{THM} and Lemma \ref{lem:3.6}.
\end{rem}

\section{Applications to establish homological invariances under tilting}\label{K-theory}
In the section, we apply the resolving dualities established in the former section to homological invariants of algebras.
On the one hand, we employ Corollary \ref{GV} to construct triangle equivalences of derived categories of Gorenstein-projective modules and to show that higher algebraic $K$-groups of Gorenstein-projective modules are invariant under tilting. On the other hand, we show that semi-derived Ringel-Hall algebras of Gorenstein-projective modules over algebras are preserved under tilting (see Theorem \ref{thm:4.10}), which generalizes some results in \cite[A5]{Lu2022}.

\subsection{Derived equivalences and algebraic K-groups of
Gorenstein-projective modules}
Throughout the section, for a small exact category $\mathcal{E}$, we denote by
$K(\mathcal{E})$ the $K$-theory space of $\mathcal{E}$ in the sense of Quillen (see \cite{Quillen}), and by $K_{n}(\mathcal{E})$ the $n$-th homotopy group of $K(\mathcal{E})$ (called the $n$-th algebraic $K$-group of $\mathcal{E}$).

Let us recall two classical results on algebraic $K$-theory of exact categories. One is usually called the ``resolution theorem" (see, for example, \cite[Section 4]{Quillen}); the other conveys that algebraic $K$-groups of exact categories are invariant under dualities.

\begin{lem}\label{lemmma:5.1} Let $\mathcal{E}'$ be a full subcategory of a small exact category $\mathcal{E}$. Assume that the
following two conditions hold:

\begin{enumerate}
\item[(a)] If $X\rightarrowtail Y \twoheadrightarrow Z$ is a conflation in $\mathcal{E}$ with $Z\in{\mathcal{E}}$, then $Y\in{\mathcal{E}}$ if and only if $X\in{\mathcal{E}}$.

\item[(b)] For any object $M \in{\mathcal{E}}$, there is an exact sequence in $\mathcal{E}$:
$$0\to M_{n}\to M_{n-1}\to\cdots \to M_{1} \to M_{0}\to M\to 0$$
such that $M_{i}\in{\mathcal{E}}$  for all $0 \leqslant i \leqslant n$.
\end{enumerate}

\noindent Then the inclusion $\mathcal{E}'\subseteq \mathcal{E}$ of exact categories induces a homotopy equivalence of $K$-theory space
$K(\mathcal{E}')\s{\simeq}\longrightarrow K(\mathcal{E}).$ In particular, $K_n(\mathcal{E}')\simeq K_n(\mathcal{E})$ for all $n\in\mathbb{N}$.
\end{lem}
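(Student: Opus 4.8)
The statement is Quillen's resolution theorem, and the plan is to deduce it directly from \cite[Section 4]{Quillen}. First I would observe that condition (a) makes $\mathcal{E}'$ closed under extensions in $\mathcal{E}$, so $\mathcal{E}'$ inherits an exact structure for which the inclusion $\iota\colon\mathcal{E}'\hookrightarrow\mathcal{E}$ is an exact functor; this is what makes the induced map $K(\iota)\colon K(\mathcal{E}')\to K(\mathcal{E})$ — equivalently, the loop of the map $BQ\mathcal{E}'\to BQ\mathcal{E}$ on classifying spaces of the associated $Q$-categories — well defined.

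Next I would check that (a) and (b) are exactly the hypotheses of Quillen's resolution theorem: (a) expresses that $\mathcal{E}'$ is extension-closed in $\mathcal{E}$ and that the kernel of an admissible epimorphism in $\mathcal{E}$ whose other two terms lie in $\mathcal{E}'$ again lies in $\mathcal{E}'$, while (b) provides a finite $\mathcal{E}'$-resolution of every object of $\mathcal{E}$. Quillen's theorem then gives that $BQ\iota$ is a homotopy equivalence, hence so is $K(\iota)$, and passing to homotopy groups yields $K_n(\mathcal{E}')\cong K_n(\mathcal{E})$ for all $n\in\mathbb{N}$.

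For a self-contained argument one reproduces Quillen's proof: filter $\mathcal{E}$ by the full subcategories $\mathcal{E}'_j$ of objects admitting an $\mathcal{E}'$-resolution of length at most $j$, so that $\mathcal{E}'_0=\mathcal{E}'$ and $\mathcal{E}=\bigcup_{j\geqslant 0}\mathcal{E}'_j$ by (b); check that each $\mathcal{E}'_j$ is again extension-closed and closed under the kernels in (a); and prove by induction on $j$ that $BQ\mathcal{E}'_{j-1}\to BQ\mathcal{E}'_j$ is a homotopy equivalence by applying Quillen's Theorem B to a suitable functor, the relevant homotopy fibres being classifying spaces of categories of resolutions that are filtering, hence contractible. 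The one genuinely delicate step on this route is that Theorem B computation — identifying the comma categories and establishing contractibility — and it is precisely what is imported from \cite{Quillen}; since the statement here coincides verbatim with the classical resolution theorem, I would simply cite it and record the translation of (a) and (b) into Quillen's hypotheses.
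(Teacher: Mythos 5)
Your approach coincides with the paper's: Lemma~\ref{lemmma:5.1} is stated as a recalled form of Quillen's resolution theorem, and the paper offers no proof beyond the reference to \cite[Section~4]{Quillen}, exactly as you propose after translating (a) and (b) into Quillen's hypotheses of extension-closure, closure under kernels of admissible epimorphisms whose other two terms lie in $\mathcal{E}'$, and finite $\mathcal{E}'$-resolvability. One small remark: as literally printed, several occurrences of $\mathcal{E}$ in conditions (a) and (b) should read $\mathcal{E}'$ (otherwise the hypotheses are vacuous and the conclusion false), and your reading silently supplies that correction.
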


\begin{lem}\label{lemma:5.2} If $F:\mathcal{A}_{1}\to\mathcal{A}_{2}$ is a duality of small exact categories, then
$K_{n}(\mathcal{A}_{1})\simeq{K_{n}(\mathcal{A}_{2})}$ for all $ n\in{\mathbb{N}}$.
\end{lem}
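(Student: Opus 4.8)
The plan is to factor the duality $F$ through the opposite-category operation and then invoke the self-duality of Quillen's $K$-theory. First I would note that a duality $F:\mathcal{A}_1\to\mathcal{A}_2$ of exact categories is precisely a covariant exact equivalence $\bar F:\mathcal{A}_1^{\,op}\xrightarrow{\ \sim\ }\mathcal{A}_2$: the opposite category $\mathcal{A}_1^{\,op}$ carries the exact structure whose conflations are the conflations of $\mathcal{A}_1$ read backwards, and the requirement that $F$ send conflations to conflations says exactly that $\bar F$ is exact. Since $\bar F$ is an equivalence of the underlying categories, it induces an equivalence $Q\bar F:Q(\mathcal{A}_1^{\,op})\to Q\mathcal{A}_2$ of $Q$-constructions, hence a homotopy equivalence $|BQ(\mathcal{A}_1^{\,op})|\simeq|BQ\mathcal{A}_2|$ of classifying spaces, and therefore a homotopy equivalence $K(\mathcal{A}_1^{\,op})\simeq K(\mathcal{A}_2)$; in particular $K_n(\mathcal{A}_1^{\,op})\cong K_n(\mathcal{A}_2)$ for all $n\in\mathbb{N}$.

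It then remains to identify $K_n(\mathcal{A}_1^{\,op})$ with $K_n(\mathcal{A}_1)$, i.e. to record the self-duality of $K$-theory. Recall that a morphism $X\to Y$ in $Q\mathcal{E}$ is an isomorphism class of diagrams $X\twoheadleftarrow Z\rightarrowtail Y$ consisting of a deflation and an inflation of $\mathcal{E}$. In $\mathcal{E}^{op}$ the classes of deflations and inflations are interchanged, so reversing every such span defines a canonical isomorphism of categories $Q(\mathcal{E}^{op})\cong (Q\mathcal{E})^{op}$. Since the geometric realization of the nerve of a small category is canonically homeomorphic to that of its opposite (via the order-reversing self-maps of the simplices), we obtain $|BQ(\mathcal{E}^{op})|\cong|BQ\mathcal{E}|$, hence $K(\mathcal{E}^{op})\simeq K(\mathcal{E})$ and $K_n(\mathcal{E}^{op})\cong K_n(\mathcal{E})$ for all $n$. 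Applying this with $\mathcal{E}=\mathcal{A}_1$ and combining with the previous step gives $K_n(\mathcal{A}_1)\cong K_n(\mathcal{A}_1^{\,op})\cong K_n(\mathcal{A}_2)$, as desired.

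There is essentially no serious obstacle: both ingredients — functoriality of the $Q$-construction under exact functors (so that an exact equivalence induces a homotopy equivalence on $K$-theory spaces) and its self-duality — are due to Quillen \cite{Quillen}, and the self-duality in particular is visible directly from the definition of $Q\mathcal{E}$. In the write-up I would simply cite these facts rather than reprove them. Alternatively, one could run the same two-step argument with Waldhausen's $S_\bullet$-construction, which is likewise manifestly self-dual, but the $Q$-construction route is the shortest.
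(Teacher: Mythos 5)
Your proposal is correct and follows essentially the same two-step route as the paper: convert the duality into a covariant exact equivalence with an opposite category, then invoke Quillen's self-duality $K_n(\mathcal{E})\cong K_n(\mathcal{E}^{op})$ (the paper works on the $\mathcal{A}_2$ side via $\mathcal{A}_1\simeq\mathcal{A}_2^{op}$, while you work on the $\mathcal{A}_1$ side via $\mathcal{A}_1^{op}\simeq\mathcal{A}_2$, but this is the same argument). The only cosmetic difference is that you spell out the proof of $Q(\mathcal{E}^{op})\cong(Q\mathcal{E})^{op}$, whereas the paper simply cites Quillen.
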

\begin{proof} Since $F$ induces an equivalence $\mathcal{A}_{1}\s{\simeq}\longrightarrow {\mathcal{A}_{2}}^{op}$ of small exact categories, we obtain $K_{n}(\mathcal{A}_{1})\simeq{K_{n}({\mathcal{A}_{2}}\opp)}$ for each $n\in\mathbb{N}$. Now,
Lemma \ref{lemma:5.2} follows from $K_{n}(\mathcal{A}_{2})\simeq{K_{n}({\mathcal{A}_{2}}^{op})}$ (see \cite[Section 2]{Quillen}).
\end{proof}

{\bf Proof of Corollary \ref{Cor-1}(1)(2)}.
\begin{proof}
$(1)$ Let $\ell:=\pd(_AT)$. Set $\A_{1}:={^{\perp}(_{A}T)}\cap{\mathcal{GP}^{\leqslant\ell}(A)}$ and $\A_{2}:={^{\perp}(T_{B})}\cap{\mathcal{GP}^{\leqslant\ell}(B^{op})}$. By Corollary \ref{GV}(1), $F:=\Hom_{A}(-,T):\mathcal{A}_{1}\to\mathcal{A}_{2}$ is a duality of small exact categories. Let $G:=\Hom_{B\opp}(-,B):\mathcal{GP}(B^{op})\to \mathcal{GP}(B)$. Then $G$ is also a duality of small exact categories. Since $\mathcal{GP}(A)\subseteq \A_{1}$ and  $\mathcal{GP}(B^{op})\subseteq \A_{2}$, there is a diagram of exact categories:
\begin{equation}\label{diagram:4.0}
\begin{aligned}
\xymatrix@C=3em@R=3em{
 &\mathcal{GP}(A) \ar[r]^{\subseteq}& \A_{1}\ar[d]^{F}\\
\mathcal{GP}(B)& \mathcal{GP}(B^{op})\ar[l]_-{G}^-{\simeq}\ar[r]^{\subseteq}&\A_{2}.}
\end{aligned}
\end{equation}
Clearly, $\A_{1}\subseteq A\modcat$ and $\B_{1}\subseteq B^{op}\modcat$ are resolving subcategories.
Now, we apply Lemma \ref{lemma:2.3'} to the inclusions of exact categories
in \eqref{diagram:4.0}, and then obtain the following diagram of triangle equivalences of
unbounded derived categories of exact categories:
$$
\begin{aligned}
\xymatrix@C=3em@R=3em{
 &\mathscr{D}(\mathcal{GP}(A) )\ar[r]^{\;\simeq}&\mathscr{D}(\A_{1})\ar[d]^{\simeq}\\
\mathscr{D}(\mathcal{GP}(B))&\mathscr{D}(\mathcal{GP}(B^{op}))^{op}\ar[l]_{\simeq}\ar[r]^{\quad\simeq}&\mathscr{D}(\A_{2})^{op}.}
\end{aligned}
$$
This implies a triangle equivalence $\mathscr{D}(\mathcal{GP}(A))\simeq{\mathscr{D}(\mathcal{GP}(B))}$, which can be restricted to an equivalence $\mathscr{D}^{*}(\mathcal{GP}(A))\simeq\mathscr{D}^{*}(\mathcal{GP}(B))$ for any $*\in\{+,-,b\}$
by Lemma \ref{lemma:2.3'}.

$(2)$ By Lemma \ref{lemmma:5.1}, $K_{n}(\mathcal{GP}(A))\simeq{K_{n}(\A_{1})}$ and $K_{n}(\mathcal{GP}(B^{op}))\simeq{K_{n}(\A_{2})}$. By Lemma \ref{lemma:5.2}, $K_{n}(\A_{1})\simeq{K_{n}(\A_{2})}$ and $K_{n}(\mathcal{GP}(B))\simeq{K_{n}(\mathcal{GP}(B^{op}))}$. Thus $K_{n}(\mathcal{GP}(A))\simeq{K_{n}(\mathcal{GP}(B))}$.
\end{proof}

By use of resolving dualities, we can show the following result, of which all the assertions except the equivalence $\mathscr{K}(\mathcal{P}(A))\simeq{\mathscr{K}(\mathcal{P}(B))}$ are known (for example, see \cite{Happel1988,Rickard,Daniel}).

\begin{cor}\label{Projective} Let $A$ and $B$ be algebras and $_{A}T_{B}$ a tilting bimodule. Then:

\begin{enumerate}
\item There is a triangle equivalence $\mathscr{K}(\mathcal{P}(A))\simeq{\mathscr{K}(\mathcal{P}(B))}$ which
   can be restricted to an equivalence $\mathscr{K}^{*}(\mathcal{P}(A))\simeq \mathscr{K}^{*}(\mathcal{P}(B))$ for any
$*\in\{+,-,b\}$.

\item $K_{n}(\mathcal{P}(A))\simeq{K_{n}(\mathcal{P}(B))}$ for any $n\in{\mathbb{N}}.$

\end{enumerate}
\end{cor}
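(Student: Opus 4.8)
The plan is to imitate the proof of Corollary \ref{Cor-1}(1)(2), with Gorenstein-projectives replaced by projectives and Corollary \ref{GV}(1) replaced by Miyashita's duality (Theorem \ref{thm:2.3}). Put $\ell:=\pd(_{A}T)$ and
$$\mathcal{C}:={^{\perp}(_{A}T)}\cap{\mathcal{P}^{<\infty}(A)},\qquad \mathcal{D}:={^{\perp}(T_{B})}\cap{\mathcal{P}^{<\infty}(B^{op})}.$$
By Theorem \ref{thm:2.3}, $F:=\Hom_{A}(-,T)|_{\mathcal{C}}$ and $G:=\Hom_{B^{op}}(-,T)|_{\mathcal{D}}$ are inverse resolving dualities between $\mathcal{C}$ and $\mathcal{D}$, and by the remark following Theorem \ref{thm:2.4} one has $\mathcal{C}={^{\perp}(_{A}T)}\cap{\mathcal{P}^{\leqslant\ell}(A)}$ and $\mathcal{D}={^{\perp}(T_{B})}\cap{\mathcal{P}^{\leqslant\ell}(B^{op})}$; in particular every object of $\mathcal{C}$ or of $\mathcal{D}$ has projective dimension at most $\ell$. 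Note that $\mathcal{P}(A)\subseteq\mathcal{C}$ and $\mathcal{P}(B^{op})\subseteq\mathcal{D}$, and that $\Hom_{B^{op}}(-,B)$ is a duality $\mathcal{P}(B^{op})\to\mathcal{P}(B)$.

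The first observation is that $\mathscr{D}^{*}(\mathcal{P}(A))=\mathscr{K}^{*}(\mathcal{P}(A))$ for every $*\in\{\emptyset,+,-,b\}$, and likewise over $B$ and $B^{op}$. Indeed, a strictly exact complex $X$ in $\mathcal{P}(A)$ is an acyclic complex of projectives whose boundaries are all projective, so each short exact sequence $0\to\mathrm{Im}(d_{X}^{i-1})\to X^{i}\to\mathrm{Im}(d_{X}^{i})\to 0$ splits and $X$ is contractible; hence $\mathscr{K}^{*}_{ac}(\mathcal{P}(A))=0$ and the Verdier quotient is trivial.

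Next I apply Lemma \ref{lemma:2.3'} to the inclusions of fully exact subcategories $\mathcal{P}(A)\subseteq\mathcal{C}$ and $\mathcal{P}(B^{op})\subseteq\mathcal{D}$. Condition (a) holds trivially: in $0\to X\to Y\to Z\to 0$ with $Z$ projective the sequence splits, so $Y$ is projective exactly when $X$ is. For condition (b) take $n:=\ell$: given $E\in\mathcal{C}$, a projective resolution $0\to P_{\ell}\to\cdots\to P_{0}\to E\to 0$ has every syzygy $\Omega^{i}_{A}(E)$ in ${^{\perp}(_{A}T)}\cap{\mathcal{P}^{\leqslant\ell}(A)}=\mathcal{C}$, because ${^{\perp}(_{A}T)}$ is resolving and $\pd(E)\leqslant\ell$; the same works for $\mathcal{D}$. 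Thus Lemma \ref{lemma:2.3'} gives triangle equivalences $\mathscr{D}^{*}(\mathcal{P}(A))\simeq\mathscr{D}^{*}(\mathcal{C})$ and $\mathscr{D}^{*}(\mathcal{P}(B^{op}))\simeq\mathscr{D}^{*}(\mathcal{D})$ for all $*$. The duality $F$ induces $\mathscr{D}(\mathcal{C})\simeq\mathscr{D}(\mathcal{D})^{op}$ (restricting, with $+$ and $-$ interchanged, to the bounded versions), and $\Hom_{B^{op}}(-,B)$ induces $\mathscr{K}(\mathcal{P}(B^{op}))\simeq\mathscr{K}(\mathcal{P}(B))^{op}$; together with the previous step ($\mathscr{D}^{*}=\mathscr{K}^{*}$ on projectives) these chain into
$$\mathscr{K}(\mathcal{P}(A))\simeq\mathscr{D}(\mathcal{C})\simeq\mathscr{D}(\mathcal{D})^{op}\simeq\mathscr{D}(\mathcal{P}(B^{op}))^{op}=\mathscr{K}(\mathcal{P}(B^{op}))^{op}\simeq\mathscr{K}(\mathcal{P}(B)),$$
and similarly for $\mathscr{K}^{*}$ with $*\in\{+,-,b\}$ (the two op's cancel and the intermediate $+\leftrightarrow-$ swap is undone by the second duality). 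This proves (1).

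For (2) the same skeleton works with $K$-theory spaces in place of derived categories: the inclusions $\mathcal{P}(A)\subseteq\mathcal{C}$ and $\mathcal{P}(B^{op})\subseteq\mathcal{D}$ satisfy the hypotheses of the resolution theorem (Lemma \ref{lemmma:5.1}), verified exactly as conditions (a) and (b) above, so $K_{n}(\mathcal{P}(A))\simeq K_{n}(\mathcal{C})$ and $K_{n}(\mathcal{P}(B^{op}))\simeq K_{n}(\mathcal{D})$; Lemma \ref{lemma:5.2} applied to the dualities $F$ and $\Hom_{B^{op}}(-,B)$ gives $K_{n}(\mathcal{C})\simeq K_{n}(\mathcal{D})$ and $K_{n}(\mathcal{P}(B^{op}))\simeq K_{n}(\mathcal{P}(B))$, and concatenation yields $K_{n}(\mathcal{P}(A))\simeq K_{n}(\mathcal{P}(B))$. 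I do not expect a genuine obstacle; the points needing care are the uniform bound $\ell$ on projective dimensions inside $\mathcal{C}$ and $\mathcal{D}$ (which is what makes Lemma \ref{lemma:2.3'} applicable with one fixed $n$, and rests on Theorem \ref{thm:2.4}) and the bookkeeping of the opposite categories in the chain for (1).
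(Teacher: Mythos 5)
Your proposal is correct and follows essentially the same path as the paper: observe $\mathscr{K}^{*}(\mathcal{P}(A))=\mathscr{D}^{*}(\mathcal{P}(A))$, replace $\mathcal{GP}$-categories by $\mathcal{P}$-categories in the proof of Corollary~\ref{Cor-1} (your $\mathcal{C},\mathcal{D}$ are exactly the paper's $\B_1,\B_2$), invoke Miyashita/Huisgen-Zimmermann for the duality, and chain through Lemmas~\ref{lemma:2.3'}, \ref{lemmma:5.1} and \ref{lemma:5.2}. The only difference is that you spell out the verification of conditions (a) and (b) of Lemma~\ref{lemma:2.3'} and the contractibility argument for strictly exact complexes of projectives, which the paper leaves implicit in its ``show similarly.''
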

\begin{proof}
Note that $\mathscr{K}^{*}(\mathcal{P}(A))= {\mathscr{D}^{*}(\mathcal{P}(A))}$ for any $*\in\{\emptyset,+,-,b\}$.
Set $\B_{1}:={^{\perp}(_{A}T)}\cap{\mathcal{P}^{\leqslant\ell}(A)}$ and $\B_{2}:={^{\perp}(T_{B})}\cap{\mathcal{P}^{\leqslant\ell}(B^{op})}$ with $\ell:=\pd(_AT)$. Then $\B_{1}={^{\perp}(_{A}T)}\cap{\mathcal{P}^{<\infty}(A)}$ and $\B_{2}={^{\perp}(T_{B})}\cap{\mathcal{P}^{<\infty}(B^{op})}$ by Theorem \ref{thm:2.4}(c), and $F:\mathcal{B}_{1}\to\mathcal{B}_{2}$ is a duality of small exact categories by Theorem \ref{thm:2.4} (see also Corollary \ref{cor:3.8}(2)). Now, in the proof of Corollary \ref{Cor-1}(1)(2), we replace $\A_{1}$, $\A_{2}$, $\mathcal{GP}(A)$ and $\mathcal{GP}(B^{op})$ with $\B_{1}$ and $\B_{2}$, $\mathcal{P}(A)$ and $\mathcal{P}(B^{op})$, respectively, and then show Corollary \ref{Projective} similarly.
\end{proof}

\subsection{Semi-derived Ringel-Hall algebras of weakly $1$-Gorenstein exact categories}\label{Ringel-Hall-algebras}
In the section, we first recall the definition of semi-derived Ringel-Hall algebras of weakly $1$-Gorenstein exact categories from \cite{Lu2022}, and then
introduce a new definition for these algebras (up to isomorphism) which behaves better under dualities (see Proposition \ref{prop:4.6}).

Let $k:=\mathbb{F}_{q}$ be a finite field and $\mathcal{A}$ a small exact category linear over $k$. For each $X\in{\mathcal{A}}$, we define
\begin{center}
{${\rm Ext}\textrm{-}{\rm proj.dim}X:=\textrm{min}\{i\in{\mathbb{N}} \ | \ \textrm{Hom}_{\mathscr{D}(\mathcal{A})}(X,Y[j])=0\ \textrm{for} \ \textrm{all} \ Y\in{\A} \ \textrm{and} \ \textrm{all} \ j>i\}$,

\vspace{2mm}
${\rm Ext}\textrm{-}{\rm inj.dim}X:=\textrm{min}\{i\in{\mathbb{N}} \ | \ \textrm{Hom}_{\mathscr{D}(\mathcal{A})}(Y,X[j])=0\ \textrm{for} \ \textrm{all} \ Y\in{\A}\ \textrm{and} \ \textrm{all} \ j>i\}$}.
\end{center}

Following the Appendix of \cite{Lu2022}, there are four subcategories of $\mathcal{A}$:

\vspace{2mm}
\begin{center}{$\mathcal{P}^{\leqslant{i}}(\mathcal{A})=\{X\in{\mathcal{A}\ |\ {\rm Ext}\textrm{-}{\rm proj.dim}X\leqslant i}\}$,

\vspace{2mm}
$\mathcal{I}^{\leqslant{i}}(\mathcal{A})=\{X\in{\mathcal{A}\ |\ {\rm Ext}\textrm{-}{\rm inj.dim}X\leqslant i}\}$,

\vspace{2mm}
$\mathcal{P}^{<\infty}(\mathcal{A})=\{X\in{\mathcal{A}\ |\ {\rm Ext}\textrm{-}{\rm proj.dim}X<\infty}\}$,

\vspace{2mm}
$\mathcal{I}^{<\infty}(\mathcal{A})=\{X\in{\mathcal{A}\ |\ {\rm Ext}\textrm{-}{\rm inj.dim}X<\infty}\}$.}
\end{center}
The category $\mathcal{A}$ is said to be \emph{weakly Gorenstein} if $\mathcal{P}^{<\infty}(\A)=\mathcal{I}^{<\infty}(\A)$; \emph{weakly $d$-Gorenstein} if it is weakly Gorenstein and $\mathcal{P}^{<\infty}(\A)=\mathcal{P}^{\leqslant d}(\A)=\mathcal{I}^{\leqslant d}(\A)$.

We consider an exact category $\A$ satisfying the following conditions:
\begin{enumerate}
\item[(E-a)] $\A$ is a small exact category with finite morphism spaces and finite extension spaces, i.e.,
    $$|\Hom_{\A}(M,N)|<\infty, \quad |\Ext_{\A}^{1}(M,N)|<\infty;$$
\item[(E-b)] $\A$ is linear over $k=\mathbb{F}_{q}$;

\item[(E-c)] $\A$ is weakly 1-Gorenstein;

\item[(E-d)] for any object $X\in{\A}$, there exists a deflation $P_{X}\to X$ with  $P_{X}\in{\mathcal{P}^{<\infty}(\A)}$.
\end{enumerate}
In this case, $\mathcal{P}^{<\infty}(\A)=\mathcal{P}^{\leqslant 1}(\A)=\mathcal{I}^{<\infty}(\A)=\mathcal{I}^{\leqslant 1}(\A)$.

Clearly, if $A$ is a finite-dimensional algebra over $k$, then the Frobenius category $\mathcal{GP}(A)$ satisfies {\rm(E-a)-(E-d)}.
If, in addition, $A$ is $1$-Gorenstein (that is,  both $_AA$ and $A_A$ have injective dimension at most $1$), then the abelian category $A\modcat$ also satisfies {\rm(E-a)-(E-d)}. The following result supplies a class of weakly 1-Gorenstein exact categories which may be neither Frobenius nor abelian categories in general.

\begin{lem}\label{1-weak-G}
Let $A$ be a finite-dimensional algebra over the field $k$ and $_{A}T_{B}$ a tilting bimodule with $\pd(_AT)\leqslant 1$. Define
 $\A:={^{\perp}(_{A}T)}\cap{\mathcal{GP}^{\leqslant 1}(A)}$. Then $\A$ is weakly 1-Gorenstein satisfying  {\rm(E-a)-(E-d)}
 and  $\mathcal{P}^{<\infty}(\A)={^{\perp}(_{A}T)}\cap{\mathcal{P}^{\leqslant 1}(A)}$.
 \end{lem}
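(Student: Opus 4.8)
The plan is to recognise $\A$ as the subcategory $\mathcal{W}{(_{A}T)}$ and then deduce everything from the structure of $\mathcal{W}$-categories together with the $\Hom(-,T)$-duality. Write $\ell:=\pd(_{A}T)\leqslant 1$. By Corollary~\ref{cor:3.5}(3), $\mathcal{W}{(_{A}T)}={^{\perp}(_{A}T)}\cap\mathcal{GP}^{\leqslant\ell}(A)$, which is exactly $\A$ when $\ell=1$; when $\ell=0$ the module $_{A}T$ is projective, so $\add(_{A}T)=\add(_{A}A)$, ${^{\perp}(_{A}T)}=\mathcal{SGP}(A)$, and since a semi-Gorenstein-projective module of finite projective dimension is projective one again gets $\A=\mathcal{GP}(A)=\mathcal{W}{(_{A}T)}$. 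In particular, by Lemma~\ref{proposition:3.2}, $\A$ is a resolving subcategory of $A\modcat$; its projective objects are precisely the projective $A$-modules (a projective-cover argument, using that $\A$ is resolving), and, since $T$ is an injective cogenerator for $\A$ and $\mathcal{W}{(_{A}T)}\cap\mathcal{W}{(_{A}T)}^{\perp}=\add(_{A}T)$, its injective objects are precisely the modules in $\add(_{A}T)$; moreover $\A$ has enough projectives and enough injectives. The same facts hold for $\mathcal{W}(T_{B})$, and by \cite[Theorem~1.5(d)]{GRS} the functors $F:=\Hom_{A}(-,T)$ and $G:=\Hom_{B^{op}}(-,T)$ restrict to inverse dualities $\mathcal{W}{(_{A}T)}\simeq\mathcal{W}(T_{B})$. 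Conditions (E-a) and (E-b) are then immediate, since $A\modcat$ is essentially small, $k$-linear and has finite $\Hom$- and $\Ext^{1}$-groups (these are finite-dimensional over the finite field $k$), and $\A$, being a fully exact subcategory, inherits this; for (E-d), a projective cover $P\twoheadrightarrow X$ in $A\modcat$ of an object $X\in\A$ has kernel $\Omega_{A}(X)\in\A$ (because $\A$ is resolving), hence is a deflation in $\A$ with $P\in\mathcal{P}(A)\subseteq\mathcal{P}^{<\infty}(\A)$.

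The heart of the proof is the identification of the four subcategories occurring in the definition of weak $1$-Gorensteinness. Since the projective objects of $\A$ are precisely the projective $A$-modules and $\A$ is resolving, a projective resolution in the exact category $\A$ coincides with a projective $A$-resolution, so the Ext-projective dimension of any $X\in\A$ equals $\pd_{A}(X)$; hence $\mathcal{P}^{\leqslant 1}(\A)=\A\cap\mathcal{P}^{\leqslant 1}(A)$ and $\mathcal{P}^{<\infty}(\A)=\A\cap\mathcal{P}^{<\infty}(A)$. Now $\mathcal{P}^{\leqslant 1}(A)\subseteq\mathcal{GP}^{\leqslant 1}(A)$, and conversely every module in $\mathcal{GP}^{\leqslant 1}(A)\cap\mathcal{P}^{<\infty}(A)$ has projective dimension at most $1$ (its first syzygy is Gorenstein-projective of finite projective dimension, hence projective), so $\mathcal{GP}^{\leqslant 1}(A)\cap\mathcal{P}^{<\infty}(A)=\mathcal{P}^{\leqslant 1}(A)$; combining, both $\mathcal{P}^{\leqslant 1}(\A)$ and $\mathcal{P}^{<\infty}(\A)$ equal ${^{\perp}(_{A}T)}\cap\mathcal{P}^{\leqslant 1}(A)$, which is the last assertion of the lemma. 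The identical reasoning for the tilting $B^{op}$-module $T$ gives $\mathcal{P}^{\leqslant 1}(\mathcal{W}(T_{B}))=\mathcal{P}^{<\infty}(\mathcal{W}(T_{B}))={^{\perp}(T_{B})}\cap\mathcal{P}^{\leqslant 1}(B^{op})$.

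It remains to pass to the injective side. As an exact equivalence between the exact categories $\A$ and $\mathcal{W}(T_{B})$, the duality $F$ sends projective objects to injective objects and conversely, hence maps $\mathcal{P}^{\leqslant i}(\A)$ and $\mathcal{P}^{<\infty}(\A)$ bijectively onto $\mathcal{I}^{\leqslant i}(\mathcal{W}(T_{B}))$ and $\mathcal{I}^{<\infty}(\mathcal{W}(T_{B}))$, and maps $\mathcal{I}^{\leqslant i}(\A)$ and $\mathcal{I}^{<\infty}(\A)$ bijectively onto $\mathcal{P}^{\leqslant i}(\mathcal{W}(T_{B}))$ and $\mathcal{P}^{<\infty}(\mathcal{W}(T_{B}))$. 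On the other hand, Miyashita's duality (Theorem~\ref{thm:2.3}), together with ${^{\perp}(_{A}T)}\cap\mathcal{P}^{<\infty}(A)={^{\perp}(_{A}T)}\cap\mathcal{P}^{\leqslant 1}(A)$ and ${^{\perp}(T_{B})}\cap\mathcal{P}^{<\infty}(B^{op})={^{\perp}(T_{B})}\cap\mathcal{P}^{\leqslant 1}(B^{op})$ from the remark following Theorem~\ref{thm:2.4} (with $\ell=1$), shows that $F$ also maps $\mathcal{P}^{<\infty}(\A)$ bijectively onto $\mathcal{P}^{<\infty}(\mathcal{W}(T_{B}))$ (and $\mathcal{P}^{\leqslant 1}(\A)$ onto $\mathcal{P}^{\leqslant 1}(\mathcal{W}(T_{B}))$). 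Since $F$ maps both $\mathcal{I}^{<\infty}(\A)$ and $\mathcal{P}^{<\infty}(\A)$ bijectively onto $\mathcal{P}^{<\infty}(\mathcal{W}(T_{B}))$, faithfulness forces $\mathcal{I}^{<\infty}(\A)=\mathcal{P}^{<\infty}(\A)$, and likewise $\mathcal{I}^{\leqslant 1}(\A)=\mathcal{P}^{\leqslant 1}(\A)$; hence $\mathcal{P}^{<\infty}(\A)=\mathcal{P}^{\leqslant 1}(\A)=\mathcal{I}^{\leqslant 1}(\A)=\mathcal{I}^{<\infty}(\A)$, so $\A$ is weakly $1$-Gorenstein. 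I expect the main obstacle to be the bookkeeping of the first and third paragraphs — pinning down the projective and injective objects of $\A$ and $\mathcal{W}(T_{B})$, verifying that $\Hom(-,T)$ interchanges them and hence matches the derived Ext-dimensions with the ordinary projective dimension, and not overlooking the degenerate case $\ell=0$ (which collapses to the Frobenius category $\mathcal{GP}(A)$); everything after that is a routine assembly of Corollary~\ref{cor:3.5}, Lemma~\ref{proposition:3.2}, Theorem~\ref{thm:2.3}, the remark after Theorem~\ref{thm:2.4} and \cite[Theorem~1.5(d)]{GRS}.
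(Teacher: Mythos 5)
Your proof is correct, and the overall strategy mirrors the paper's: identify $\A$ with $\mathcal{W}(_{A}T)$ via Corollary~\ref{cor:3.5}(3), pin down the projective and injective objects of $\A$ (projective $A$-modules and $\add(_{A}T)$, respectively, via Lemma~\ref{proposition:3.2}), and then identify the four subcategories $\mathcal{P}^{\leqslant 1}(\A)$, $\mathcal{P}^{<\infty}(\A)$, $\mathcal{I}^{\leqslant 1}(\A)$, $\mathcal{I}^{<\infty}(\A)$ using the resolving dualities.

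Where you diverge is in the last step, the identification of $\mathcal{I}^{<\infty}(\A)$ and $\mathcal{I}^{\leqslant 1}(\A)$. The paper argues module-theoretically: from Theorem~\ref{thm:2.4}(c) it reads off that $\mathcal{P}^{<\infty}(\A)$ consists exactly of the objects with finite $\add(_{A}T)$-coresolutions, identifying it with $\mathcal{I}^{<\infty}(\A)$, and then uses the Miyashita duality $\mathcal{P}^{\leqslant 1}$-level statement to produce an $\add(_{A}T)$-coresolution of length at most $1$ for each such object, giving $\mathcal{I}^{<\infty}(\A)=\mathcal{I}^{\leqslant 1}(\A)$. You instead argue at the level of subcategories: the contravariant exact equivalence $F:\A\to\mathcal{W}(T_B)$ swaps $\mathcal{P}$-type and $\mathcal{I}$-type subcategories, while Miyashita's duality and the earlier computation force $F$ to also carry $\mathcal{P}^{<\infty}(\A)$ (resp.\ $\mathcal{P}^{\leqslant 1}(\A)$) onto $\mathcal{P}^{<\infty}(\mathcal{W}(T_B))$ (resp.\ $\mathcal{P}^{\leqslant 1}(\mathcal{W}(T_B))$), so the $\mathcal{P}$- and $\mathcal{I}$-subcategories coincide. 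Both routes rest on the same inputs (Theorem~\ref{thm:2.3}, the remark after Theorem~\ref{thm:2.4}, and \cite[Theorem~1.5(d)]{GRS}); your version is a touch more abstract but avoids writing down any coresolution. Two small cosmetic remarks: the case split on $\ell=0$ in your first paragraph is unnecessary (Corollary~\ref{cor:3.5}(3) gives $\mathcal{W}(_{A}T)={^{\perp}(_{A}T)}\cap\mathcal{GP}^{\leqslant\ell}(A)={^{\perp}(_{A}T)}\cap\mathcal{GP}^{<\infty}(A)$, which is sandwiched with ${^{\perp}(_{A}T)}\cap\mathcal{GP}^{\leqslant 1}(A)$ for any $\ell\leqslant 1$), and referring to the contravariant duality $F$ as ``an exact equivalence between the exact categories'' is imprecise phrasing---it is an exact equivalence onto $\mathcal{W}(T_B)^{op}$, which is exactly why it swaps projectives and injectives.
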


 \begin{proof}
By Corollary \ref{cor:3.5}(3), $\A=\mathcal{W}(_{A}T)$. Since $\A$ is a resolving subcategory of $A\modcat$ by Lemma \ref{proposition:3.2}(1), it is a small exact category of which the projective objects are exactly projective $A$-modules. By Lemma \ref{proposition:3.2}(3),  {\rm add}$(_{A}T)$ equals the full subcategory of $\A$ consisting of injective objects. So, $\A$ as an exact category has enough projective objects and injective objects. Clearly,
$\mathcal{P}^{<\infty}(\A)=\A\cap\mathcal{P}^{<\infty}(A)={^{\bot}(_{A}T)}\cap\mathcal{P}^{\leqslant 1}(A)$. Further, since  $\pd(_AT)\leqslant 1$, it follows from Theorem \ref{thm:2.4}(c) that ${^{\bot}(_{A}T)}\cap\mathcal{P}^{\leqslant 1}(A)={^{\bot}(_{A}T)}\cap\mathcal{P}^{<\infty}(A)$ which consists of those $A$-modules having finite $\add({_A}T)$-coresolutions. Thus $\mathcal{P}^{<\infty}(\A)=\mathcal{I}^{<\infty}(\A)$, that is, $\A$ is weakly Gorenstein.
Since $\mathcal{P}^{<\infty}(\A)\subseteq \mathcal{P}^{\leqslant 1}(A)$, we have
$\mathcal{P}^{<\infty}(\A)=\mathcal{P}^{\leqslant 1}(\A)$. Moreover, by the resolving duality $\Hom_A(-,T): {^{\bot}(_{A}T)}\cap\mathcal{P}^{\leqslant 1}(A)\to {^{\bot}(T_B)}\cap\mathcal{P}^{\leqslant 1}(B^{op})$ in Theorem \ref{thm:2.3}, each object of ${^{\bot}(_{A}T)}\cap\mathcal{P}^{\leqslant 1}(A)$ has an $\add({_A}T)$-coresolution of length at most $1$. This implies $\mathcal{I}^{<\infty}(\A)=\mathcal{I}^{\leqslant 1}(\A)$, and therefore $\A$ is weakly 1-Gorenstein.
Since $\A\subseteq A\modcat$ is a resolving subcategory and $k$ is a finite field, it can be checked that $\A$ satisfies {\rm(E-a)-(E-d)} .
 \end{proof}

Now, let $\A$ be an exact category which satisfies {\rm(E-a)-(E-d)}. Denote by ${\rm Iso}(\A)$ the set of isomorphism classes of objects in $\A$ and by $K_{0}(\A)$  the Grothendieck group of $\A$. Let $\HA$ be the \emph{Ringel-Hall algebra} of $\A$, that is, $\HA=\bigoplus_{[M]\in{\rm Iso}(\A)}\mathbb{Q}[M]$ as $\mathbb{Q}$-vector spaces with the multiplication given by
$$[M]\diamond[N]:=\sum_{[L]\in{{\rm Iso}(\A)}}\displaystyle\frac{|\Ext_{\A}^{1}(M,N)_{L}|}{|\Hom_{\A}(M,N)|}[L]$$
where $\Ext_{\A}^{1}(M,N)_{L}$ stands for the subset of ${\Ext_{\A}^{1}(M,N)}$ parameterizing all extensions in which the middle term is isomorphic to $L$.
Then $\HA$ is a $K_{0}(\A)$-graded algebra.
For $M\in{\A}$ and $K\in{\mathcal{P}^{\leqslant 1}(\A)}$, define
$$\langle K,M\rangle=\displaystyle\textrm{dim}_{k}\Hom_{\A}(K,M)-\displaystyle\textrm{dim}_{k}\Ext_{\A}^1(K,M),$$
$$\langle M,K\rangle=\displaystyle\textrm{dim}_{k}\Hom_{\A}(M,K)-\displaystyle\textrm{dim}_{k}\Ext_{\A}^1(M,K).$$
These formulas descend bilinear forms (called \emph{Euler forms}), again denoted
by $\langle \cdot,\cdot\rangle$, on the Grothendieck groups $K_{0}({\mathcal{P}^{\leqslant 1}(\A)})$ and $K_{0}(\A)$.

To introduce semi-derived Ringel-Hall algebras of weakly 1-Gorenstein exact categories, we
first recall the definition of (left or right) denominator subsets of rings and their relations with Ore localizations.

Let $R$ be a ring with identity and $S$ a subset of $R$ closed under multiplications with $1\in{S}$. Following \cite[Chapter 4]{Lam}, $S$ is called a \emph{left denominator subset} of $R$ if the following conditions hold:
\begin{enumerate}
\item[(i)] For any $a\in{R}$ and $s\in{S}$, the intersection $Sa\cap{Rs}$ is not empty;

\item[(ii)] For any $r\in{R}$, if $rt=0$ for some $t\in{S}$, there exists some $t'\in{S}$ such that $t'r=0$.
\end{enumerate}
If $S$ satisfies only the condition (i), then $S$ is called a \emph{left Ore subset} of $R$. Similarly, we can define right denominator sets and right Ore sets. Now, Ore's localization theorem states that
\begin{enumerate}
\item the left Ore localization  $[S^{-1}]R$  exists if and only if $S$ is a left denominator subset of $R$;
\item the right Ore localization  $R[S^{-1}]$  exists if and only if $S$ is a right denominator subset of $R$.
\end{enumerate}
If $S$ is a left and right denominator subset of $R$,
then $[S^{-1}]R$ is called the \emph{Ore localization} of $R$ at $S$.
In this case, up to isomorphism of rings, $[S^{-1}]R$, $R[S^{-1}]$ and the universal localization $R_{S}$ of $R$ at $S$ are
the same (for example, see \cite[Section 2.2]{chen-xi}).

Let $I(\A)$ be the two-sided ideal of $\HA$ generated by
$$\{[L]-[K\oplus M] \ | \ \exists \ \textrm{an exact sequence}\ 0\to K\to L\to M\to 0 \ \textrm{with}\ K\in{{\mathcal{P}^{\leqslant 1}(\A)}}\}.$$
We consider the following multiplicatively closed subset of the quotient $\HA/I(\A)$ of $\HA$ by $I(\A)$:
$$\mathcal{S}_{\A}:=\{a[K]\in{\HA/I(\A)} \ |\ a\in{\mathbb{Q}^{\times}},\ K\in{{\mathcal{P}^{\leqslant 1}(\A)}}\}.$$

\begin{lem}\label{lem:4.2}{\rm(}\cite[Proposition A5]{Lu2022}{\rm)}
$\mathcal{S}_{\A}$ is a right denominator subset of $\HA/I(\A)$. Equivalently, the right Ore localization $(\HA/I(\A))[\mathcal{S}^{-1}_{\mathcal{A}}]$ of $\HA/I(\A)$ with respect to $\mathcal{S}_{\A}$ exists.
\end{lem}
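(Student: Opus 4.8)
The plan is to verify the two Ore conditions for the multiplicative set $\mathcal{S}_{\A}$ inside the quotient algebra $\HA/I(\A)$, using the explicit structure constants of the Ringel-Hall multiplication together with the defining relations in $I(\A)$. First I would record the key elementary observations. For $K\in{\mathcal{P}^{\leqslant 1}(\A)}$ the element $[K]$ is, modulo $I(\A)$, ``group-like up to a scalar'': if $0\to K'\to K\to K''\to 0$ with $K',K''\in{\mathcal{P}^{\leqslant 1}(\A)}$, then $[K]\equiv[K'\oplus K'']=[K']\diamond[K'']\cdot|\Hom_{\A}(K',K'')|/|\Ext^1_{\A}(K',K'')|^{-1}$... more precisely $[K']\diamond[K'']=\frac{|\Ext^1_{\A}(K',K'')_{K}|}{|\Hom_{\A}(K',K'')|}[K]+(\text{other terms})$, and after passing to $\HA/I(\A)$ all middle terms of such extensions are identified, so $[K]$ becomes invertible in $\HA/I(\A)$ after inverting a scalar: one uses condition (E-d) to choose, for $K\in{\mathcal{P}^{\leqslant 1}(\A)}$, a deflation $P_K\twoheadrightarrow K$ with $P_K$ projective (of finite projective dimension) and kernel again in $\mathcal{P}^{\leqslant 1}(\A)$, so $[P_K]\equiv[K]\diamond[\Omega]$ up to a scalar; iterating produces a two-sided ``inverse up to scalar'' for $[K]$ in terms of a projective object. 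This is the technical heart and I expect it to be the main obstacle: one must show that the products $[K]\diamond[M]$ and $[M]\diamond[K]$ for $K\in{\mathcal{P}^{\leqslant 1}(\A)}$ can be controlled, and in particular that left (resp. right) multiplication by $[K]$ on $\HA/I(\A)$ is, up to the scalar ambiguity, bijective on the graded pieces --- this uses weak $1$-Gorensteinness (E-c) to guarantee $\Ext^{\geqslant 2}$ vanishing so that extensions by objects of $\mathcal{P}^{\leqslant 1}(\A)$ are well-behaved, and the fact that $\mathcal{P}^{\leqslant 1}(\A)=\mathcal{I}^{\leqslant 1}(\A)$.

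Granting that, verifying the right Ore condition (i) goes as follows. Given $a\in\HA/I(\A)$ and $s=b[K]\in\mathcal{S}_{\A}$ with $b\in\mathbb{Q}^\times$, $K\in{\mathcal{P}^{\leqslant 1}(\A)}$, I must find $a'\in\HA/I(\A)$ and $s'\in\mathcal{S}_{\A}$ with $a s' = s a'$, i.e. $s a' \in (\HA/I(\A))\,s'$. It suffices to treat $a=[M]$ a basis element. Using (E-d) pick a deflation $P\twoheadrightarrow M$ with $P\in{\mathcal{P}^{<\infty}(\A)}=\mathcal{P}^{\leqslant 1}(\A)$ and kernel $N\in{\mathcal{P}^{\leqslant 1}(\A)}$ (finiteness of projective dimension being $\leqslant 1$ here), so $[P]\equiv c\,[M]\diamond[N]$ modulo $I(\A)$ for a positive rational $c$; thus $[M]\diamond[N]$ lies in $\mathcal{S}_{\A}\cdot(\HA/I(\A))$-combinations with invertible coefficient $[P]$. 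Combining this with the near-invertibility of $[K]$ established above, one produces the required common right multiple: concretely, $[M]$ times a suitable element of $\mathcal{S}_{\A}$ equals (an invertible $\mathcal{S}_{\A}$-scalar) times $[P]$, and $[P]$ being itself in $\mathcal{S}_{\A}$ can absorb $s$. I would organize this as: (a) show $\mathcal{S}_{\A}$-multiples of any $[M]$ can be pushed into $\mathcal{P}^{\leqslant 1}(\A)$-parametrized classes; (b) show $\mathcal{S}_{\A}$ is already an Ore set \emph{within} the subalgebra generated by $\mathcal{P}^{\leqslant 1}(\A)$-classes (easy, since there it is central up to scalars by the group-like property); (c) glue.

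For the denominator condition (ii) I must show: if $r\cdot t=0$ in $\HA/I(\A)$ for some $t\in\mathcal{S}_{\A}$, then $t'\,r=0$ for some $t'\in\mathcal{S}_{\A}$. Here the key is again that right multiplication by $t=b[K]$ is, up to an invertible scalar and a unit of the form $[P]$, the same as multiplication by the ``inverse'' built from a projective resolution of $K$; since $[P]$ for $P$ of finite projective dimension acts invertibly on $\HA/I(\A)$ (because tensoring/Hom with projectives is exact, so left and right multiplication by $[P]$ are bijective up to the $I(\A)$-identifications), $rt=0$ forces $r\cdot[K']=0$ for the relevant $K'\in{\mathcal{P}^{\leqslant 1}(\A)}$ and then, using the left-right symmetry afforded by weak $1$-Gorensteinness (which makes $\mathcal{P}^{\leqslant 1}(\A)=\mathcal{I}^{\leqslant 1}(\A)$ and hence lets one run the same argument on the other side), $[K'']\cdot r=0$ for a suitable $K''\in{\mathcal{P}^{\leqslant 1}(\A)}$. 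Finally, the concluding sentence of the lemma --- equivalence of ``$\mathcal{S}_{\A}$ is a right denominator subset'' with ``the right Ore localization $(\HA/I(\A))[\mathcal{S}^{-1}_{\mathcal{A}}]$ exists'' --- is precisely Ore's localization theorem as recalled just above the statement, so nothing further is needed there. I expect (ii) to be lighter than (i); the genuine difficulty is establishing the near-invertibility and the compatibility of $\mathcal{P}^{\leqslant 1}(\A)$-classes with the Hall product, for which I would lean on the explicit structure constants and on (E-c)--(E-d).
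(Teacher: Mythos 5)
The paper offers no proof of this lemma: the label \textup{(\cite[Proposition A5]{Lu2022})} appearing in the statement indicates it is quoted from Lu--Wang, and both Lemma \ref{lem:4.2} and its dual Lemma \ref{lem:4.4} are taken on faith. So there is no in-house argument to compare against, and your sketch is a genuine reconstruction attempt. It captures some of the right raw material, but a central claim in it is false and cannot be repaired by the route you propose. You repeatedly assert that $[K]$ for $K\in\mathcal{P}^{\leqslant 1}(\A)$, and $[P]$ for $P$ of finite projective dimension, are already invertible (or ``act invertibly'') in $\HA/I(\A)$ --- ``iterating produces a two-sided inverse up to scalar for $[K]$'', ``left and right multiplication by $[P]$ are bijective''. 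That is not so: if the classes in $\mathcal{S}_{\A}$ were already units in $\HA/I(\A)$ the Ore localization would be the identity and the lemma would be vacuous. Concretely, right multiplication by $[K]$ sends (the image of) $[M]$ to $q^{-\langle M,K\rangle}[M\oplus K]$ by the formula $[M]\diamond[K]=q^{-\langle M,K\rangle}[M\oplus K]$ in $\HA/I(\A)$ (this is \cite[Lemma A4]{Lu2022}, quoted in the proof of the paper's Lemma \ref{lem:4.5}); its image misses every basis class not supporting a direct summand $K$, so it is certainly not surjective.

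What actually goes through with that formula is the following version of your steps (a)--(c), and it yields the right Ore condition cleanly. From $[M]\diamond[K]=q^{-\langle M,K\rangle}[M\oplus K]$ one sees the subalgebra spanned by $\mathcal{P}^{\leqslant 1}(\A)$-classes is $q$-commutative, $[K_1]\diamond[K_2]=q^{\langle K_2,K_1\rangle-\langle K_1,K_2\rangle}[K_2]\diamond[K_1]$, hence trivially right Ore. For a general $[M]$, condition (E-d) and weak $1$-Gorensteinness provide a conflation $N\rightarrowtail P_M\twoheadrightarrow M$ with $P_M,N\in\mathcal{P}^{\leqslant 1}(\A)$, so $[M]\diamond[N]=q^{-\langle M,N\rangle}[P_M]$ lies in that $q$-commutative subalgebra, and then $[M]\diamond[N\oplus K]$ is a nonzero scalar multiple of $[K]\diamond[P_M]$, i.e.\ $[M]\,\mathcal{S}_{\A}\cap[K]\,(\HA/I(\A))\neq\emptyset$; extending to arbitrary $a\in\HA/I(\A)$ uses only the $K_0(\A)$-grading, since the scalar depends on $[M]$ only through its class. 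The genuine gap in your sketch is condition (ii), the non-zero-divisor part. Your appeal to ``left-right symmetry afforded by weak $1$-Gorensteinness'' does not work here: weak $1$-Gorensteinness gives $\mathcal{P}^{\leqslant 1}(\A)=\mathcal{I}^{\leqslant 1}(\A)$ at the level of the category, but the ideal $I(\A)$ is built asymmetrically from conflations with kernel in $\mathcal{P}^{\leqslant 1}(\A)$ only; the symmetric statement lives in $\HA/(I(\A)+J(\A))$ and is exactly what the paper's Lemma \ref{lem:4.5} is about. Inside $\HA/I(\A)$, $[K]\diamond[M]$ does not collapse, and showing that $[K]\diamond r=0$ forces $r\diamond t'=0$ for some $t'\in\mathcal{S}_{\A}$ requires a structural (freeness over the torus subalgebra) input that your outline does not supply and cannot be replaced by the pretended invertibility of $[K]$.
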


Following \cite{Lu2022} (also cf. \cite{Bridgeland, Gorsky, LP21}), the algebra $(\HA/I(\mathcal{A}))[\mathcal{S}^{-1}_{\mathcal{A}}]$ is called the \emph{semi-derived Ringel-Hall algebra} of $\A$ and denoted by $\mathcal{SDH}(\A)$.
Since the opposite category  $\A^{op}$ of $\A$ is also a weakly $1$-Gorenstein exact category, the algebra
$\mathcal{SDH}(\A^{op})$ is well defined. However, at the present time, it is not clear whether $\mathcal{SDH}(\A)\cong (\mathcal{SDH}(\A^{op}))^{op}$ as algebras because the definition of $\mathcal{SDH}(\A)$ seems not to be left-right symmetric.
To solve this problem, we will introduce a new definition of $\mathcal{SDH}(\A)$ up to isomorphism of algebras.

Let  $J(\A)$ be the two-sided ideal of $\HA$ generated by
$$\{[L]-[K\oplus M] \ | \ \exists \ \textrm{exact} \ \textrm{sequence}\ 0\to M\to L\to K\to 0 \ \textrm{with}\ K\in{{\mathcal{I}^{\leqslant 1}(\A)}}\}.$$
Then $I({\A}^{op})\cong{J(\A)}$ and there is an isomorphism of algebras:
$\HA/J(\A)\cong{(\mathcal{H}(\mathcal{A}^{op})/I(\mathcal{A}^{op}))^{op}}.$
Similarly, we consider the  multiplicatively closed subset $\mathcal{R}_{\A}$ of $\HA/J(\A)$:
$$\mathcal{R}_{\A}:=\{a[K]\in{\HA/J(\A)} \ |\ a\in{\mathbb{Q}^{\times}},\ K\in{{\mathcal{I}^{\leqslant 1}(\A)}}\}.$$
The following result is the dual of  Lemma \ref{lem:4.2}.

\begin{lem}\label{lem:4.4}
$\mathcal{R}_{\A}$ is a left denominator subset of $\HA/J(\A)$. Equivalently, the left Ore localization $[\mathcal{R}^{-1}_{\mathcal{A}}](\HA/J(\A))$ of
$\HA/J(\A)$ with respect to $\mathcal{R}_{\A}$ exists.  Moreover, there is an isomorphism $[\mathcal{R}^{-1}_{\mathcal{A}}](\HA/J(\A))\cong{(\mathcal{SDH}(\mathcal{A}^{op}))^{op}}$ of algebras.
\end{lem}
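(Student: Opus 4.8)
The plan is to deduce Lemma~\ref{lem:4.4} from Lemma~\ref{lem:4.2} applied to the opposite exact category $\A^{op}$, transporting all data along the algebra isomorphism $\HA/J(\A)\cong(\mathcal{H}(\A^{op})/I(\A^{op}))^{op}$ quoted just before the statement. Since $\A^{op}$ is again a weakly $1$-Gorenstein exact category satisfying {\rm (E-a)--(E-d)} (as already used in declaring $\mathcal{SDH}(\A^{op})$ well defined), Lemma~\ref{lem:4.2} applied to $\A^{op}$ gives that $\mathcal{S}_{\A^{op}}=\{a[K]\mid a\in\mathbb{Q}^{\times},\ K\in\mathcal{P}^{\leqslant 1}(\A^{op})\}$ is a right denominator subset of $\mathcal{H}(\A^{op})/I(\A^{op})$, and that its right Ore localization is, by definition, $\mathcal{SDH}(\A^{op})$.

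First I would make the dictionary between $\A$ and $\A^{op}$ explicit. Passing to the opposite category reverses conflations, so an exact sequence $0\to M\to L\to K\to 0$ in $\A$ is the same datum as an exact sequence $0\to K\to L\to M\to 0$ in $\A^{op}$; and because the Ext-projective and Ext-injective dimensions are read off $\Hom$ in the derived category while $\mathscr{D}(\A^{op})\simeq\mathscr{D}(\A)^{op}$, one has $\mathcal{I}^{\leqslant 1}(\A)=\mathcal{P}^{\leqslant 1}(\A^{op})$. Under the standard algebra isomorphism $\mathcal{H}(\A^{op})\cong\HA^{op}$, which is the identity on the basis $\{[M]\}_{[M]\in{\rm Iso}(\A)}$ (immediate from $\Hom_{\A^{op}}(M,N)=\Hom_{\A}(N,M)$ and $\Ext^{1}_{\A^{op}}(M,N)=\Ext^{1}_{\A}(N,M)$), these two observations are exactly what sends the defining generators of $I(\A^{op})$ to those of $J(\A)$ — hence they produce the isomorphism $\mathcal{H}(\A^{op})/I(\A^{op})\cong(\HA/J(\A))^{op}$ quoted before the statement — and, crucially, they send the multiplicative set $\mathcal{S}_{\A^{op}}$ onto $\mathcal{R}_{\A}$.

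Next I would invoke the elementary symmetry of Ore localization under $R\leftrightarrow R^{op}$: a subset $S$ of a ring $R$ is a left denominator subset of $R$ if and only if, viewed inside $R^{op}$, it is a right denominator subset of $R^{op}$ (conditions (i) and (ii) of the definition are interchanged), and in that case $R^{op}[S^{-1}]\cong([S^{-1}]R)^{op}$ as rings, both sides having the same universal property read in opposite categories. Applying this with $R=\HA/J(\A)$: since $\mathcal{S}_{\A^{op}}=\mathcal{R}_{\A}$ is a right denominator subset of $\mathcal{H}(\A^{op})/I(\A^{op})\cong(\HA/J(\A))^{op}$, the set $\mathcal{R}_{\A}$ is a left denominator subset of $\HA/J(\A)$, so the left Ore localization $[\mathcal{R}^{-1}_{\mathcal{A}}](\HA/J(\A))$ exists. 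For the algebra isomorphism I would then chase the same identifications through the localizations: $\big([\mathcal{R}^{-1}_{\mathcal{A}}](\HA/J(\A))\big)^{op}\cong(\HA/J(\A))^{op}[\mathcal{R}^{-1}_{\mathcal{A}}]\cong(\mathcal{H}(\A^{op})/I(\A^{op}))[\mathcal{S}^{-1}_{\mathcal{A}^{op}}]=\mathcal{SDH}(\A^{op})$, whence $[\mathcal{R}^{-1}_{\mathcal{A}}](\HA/J(\A))\cong(\mathcal{SDH}(\A^{op}))^{op}$.

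The only genuinely non-formal point — and where care is needed — is the bookkeeping of the second paragraph: verifying that under $\mathcal{H}(\A^{op})\cong\HA^{op}$ the ideal $I(\A^{op})$ really matches $J(\A)$ and the denominator set $\mathcal{S}_{\A^{op}}$ really matches $\mathcal{R}_{\A}$, which rests precisely on the identity $\mathcal{I}^{\leqslant 1}(\A)=\mathcal{P}^{\leqslant 1}(\A^{op})$ together with the kernel/cokernel swap for conflations. Everything after that is a routine transcription of Lemma~\ref{lem:4.2} and of the discussion immediately preceding the statement.
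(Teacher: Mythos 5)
Your proof is correct and takes essentially the same approach the paper intends: the paper gives no explicit argument, merely calling Lemma~\ref{lem:4.4} the dual of Lemma~\ref{lem:4.2}, and you have spelled out that dualization carefully — identifying $\mathcal{I}^{\leqslant 1}(\A)$ with $\mathcal{P}^{\leqslant 1}(\A^{op})$, matching $J(\A)$ with $I(\A^{op})$ and $\mathcal{R}_{\A}$ with $\mathcal{S}_{\A^{op}}$ under $\mathcal{H}(\A^{op})\cong\HA^{op}$, and invoking the $R\leftrightarrow R^{op}$ symmetry for Ore localization. The one point you inherit from the paper without further justification is that $\A^{op}$ also satisfies condition (E-d), which the paper itself takes for granted when asserting that $\mathcal{SDH}(\A^{op})$ is well defined, so that is not a gap in your argument relative to the paper.
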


Now, we consider the quotient $\HA/(I(\A)+J(\A))$ of $\HA$ by the ideal $I(\A)+J(\A)$ and
its multiplicatively closed subset
$$\Phi_{\A}:=\{a[K]\in{\HA/(I(\A)+J(\A))} \ |\ a\in{\mathbb{Q}^{\times}},\ K\in{{\mathcal{P}^{\leqslant 1}(\A)}}\}.$$

\begin{lem}\label{lem:4.5}
$(1)$ $\Phi_{\A}$ is a left and right denominator subset of the algebra $\HA/(I(\A)+J(\A))$.

$(2)$ There are isomorphisms of algebras:

\begin{equation*}
\begin{aligned}
\mathcal{SDH}(\A)&\cong{(\HA/(I(\A)+J(\A)))[\Phi^{-1}_{\mathcal{A}}]}\\
&\cong{[\Phi^{-1}_{\mathcal{A}}](\HA/(I(\A)+J(\A)))}\\
&\cong{[\mathcal{R}^{-1}_{\mathcal{A}}](\HA/J(\A))}\\
&\cong{(\mathcal{SDH}(\mathcal{A}^{op}))^{op}}.
\end{aligned}
\end{equation*}
\end{lem}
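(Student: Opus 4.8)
The plan is to reduce the whole statement to one computational fact — that $J(\A)$ lies in the kernel of the localization morphism $\HA\to\mathcal{SDH}(\A)$ — and then to deduce (1) and all the isomorphisms in (2) formally, using standard properties of Ore localization and the symmetry $\A\leftrightarrow\A^{op}$. First I record three preliminary observations. Since $\A$ is weakly $1$-Gorenstein we have $\mathcal{P}^{\leqslant 1}(\A)=\mathcal{I}^{\leqslant 1}(\A)$, so the multiplicative sets $\mathcal{S}_{\A}\subseteq\HA/I(\A)$, $\mathcal{R}_{\A}\subseteq\HA/J(\A)$ and $\Phi_{\A}\subseteq\HA/(I(\A)+J(\A))$ are generated by the classes $[K]$ of one and the same collection of objects, and the canonical surjections $\HA/I(\A)\twoheadrightarrow\HA/(I(\A)+J(\A))$ and $\HA/J(\A)\twoheadrightarrow\HA/(I(\A)+J(\A))$ carry $\mathcal{S}_{\A}$, respectively $\mathcal{R}_{\A}$, onto $\Phi_{\A}$. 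Next, $\mathcal{SDH}(\A)=(\HA/I(\A))[\mathcal{S}^{-1}_{\mathcal{A}}]$ exists as a right Ore localization by Lemma~\ref{lem:4.2}. Finally, from the preceding discussion $I(\A^{op})\cong J(\A)$, $\mathcal{H}(\A^{op})\cong\HA^{op}$, and Lemma~\ref{lem:4.4} identifies $[\mathcal{R}^{-1}_{\mathcal{A}}](\HA/J(\A))$ with $(\mathcal{SDH}(\A^{op}))^{op}$.

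The heart of the argument is the claim: for every conflation $0\to M\to L\to K\to 0$ in $\A$ with $K\in\mathcal{I}^{\leqslant 1}(\A)=\mathcal{P}^{\leqslant 1}(\A)$, the element $[L]-[K\oplus M]$ maps to $0$ in $\mathcal{SDH}(\A)$. Since these elements generate $J(\A)$ and the kernel of $\HA\to\mathcal{SDH}(\A)$ is a two-sided ideal containing $I(\A)$, this gives $J(\A)\subseteq\Ker(\HA\to\mathcal{SDH}(\A))$. To prove it one works inside $\HA/I(\A)$ and exhibits $[E]\in\mathcal{S}_{\A}$ with $([L]-[K\oplus M])\diamond[E]=0$ there — which is precisely the condition for an element to die in a right Ore localization. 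The basic tool is the reduction formula: for any $E\in\mathcal{P}^{\leqslant 1}(\A)$ and any object $X$, every extension $0\to E\to N\to X\to 0$ is trivialized by the defining relations of $I(\A)$, whence, summing over the partition of $\Ext^1_{\A}(X,E)$ by isomorphism class of middle term, $[X]\diamond[E]\equiv q^{-\langle X,E\rangle}[X\oplus E]\pmod{I(\A)}$; here $\langle-,E\rangle$ is the Euler form, which is defined and additive on $K_0(\A)$ because $\Ext^{\geqslant 2}_{\A}(-,E)=0$ for $E\in\mathcal{I}^{\leqslant 1}(\A)$. Applying this with $X=L$ and $X=K\oplus M$ and using $[L]=[K]+[M]$ in $K_0(\A)$, which forces $\langle L,E\rangle=\langle K\oplus M,E\rangle$, reduces the claim to finding $E\in\mathcal{P}^{\leqslant 1}(\A)$ with $[L\oplus E]\equiv[K\oplus M\oplus E]\pmod{I(\A)}$; one produces such an $E$ from a deflation $P\twoheadrightarrow K$ with $P\in\mathcal{P}^{<\infty}(\A)=\mathcal{P}^{\leqslant 1}(\A)$ (condition (E-d)) and from the conflations obtained by pulling $0\to M\to L\to K\to 0$ back along it, whose $\mathcal{P}^{\leqslant 1}$-subobjects make the relations of $I(\A)$ applicable. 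I expect this last step — arranging that all the relevant objects of $\mathcal{P}^{\leqslant 1}(\A)$ sit in the sub-position where $I(\A)$ actually acts — to be the main obstacle, and it is precisely where the weakly $1$-Gorenstein hypothesis and (E-d) enter.

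Granting the claim, the remainder is formal. That $J(\A)$ dies in $\mathcal{SDH}(\A)$ means the map $\HA/I(\A)\to(\HA/I(\A))[\mathcal{S}^{-1}_{\mathcal{A}}]$ factors through $\HA/(I(\A)+J(\A))$; by the standard fact that a right Ore localization is unchanged upon killing an ideal that already becomes zero, the induced morphism $\HA/(I(\A)+J(\A))\to\mathcal{SDH}(\A)$ exhibits $\Phi_{\A}$ as a right denominator subset of $\HA/(I(\A)+J(\A))$ and yields $\mathcal{SDH}(\A)\cong(\HA/(I(\A)+J(\A)))[\Phi^{-1}_{\mathcal{A}}]$. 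Applying the claim to $\A^{op}$ and translating via $J(\A^{op})\cong I(\A)$, $\mathcal{H}(\A^{op})\cong\HA^{op}$ and Lemma~\ref{lem:4.4}, one obtains, after passing to opposite rings, that $I(\A)$ dies in $[\mathcal{R}^{-1}_{\mathcal{A}}](\HA/J(\A))\cong(\mathcal{SDH}(\A^{op}))^{op}$; by the dual of the same localization fact, $\Phi_{\A}$ is then a left denominator subset of $\HA/(I(\A)+J(\A))$ and $[\mathcal{R}^{-1}_{\mathcal{A}}](\HA/J(\A))\cong[\Phi^{-1}_{\mathcal{A}}](\HA/(I(\A)+J(\A)))$. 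This proves (1); and since $\Phi_{\A}$ is a two-sided denominator subset, the left and right Ore localizations of $\HA/(I(\A)+J(\A))$ at $\Phi_{\A}$ coincide, so concatenating the isomorphisms just obtained, together with Lemma~\ref{lem:4.4}, gives the chain in (2).
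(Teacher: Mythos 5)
There is a genuine gap, and you already point to it yourself: the claim that $J(\A)$ dies under the localization map $\lambda_1:\HA\to\mathcal{SDH}(\A)$ is exactly the nontrivial input, and your argument for it stops short. The reduction formula $[X]\diamond[E]\equiv q^{-\langle X,E\rangle}[X\oplus E]\pmod{I(\A)}$ for $E\in\mathcal{P}^{\leqslant 1}(\A)$ is fine, and it correctly reduces the claim to finding $E\in\mathcal{P}^{\leqslant 1}(\A)$ with $[L\oplus E]\equiv[K\oplus M\oplus E]\pmod{I(\A)}$; but your proposed construction of $E$ via a deflation $P\twoheadrightarrow K$ with $P\in\mathcal{P}^{<\infty}(\A)$ runs into precisely the difficulty that after pulling back $0\to M\to L\to K\to 0$ along $P\to K$, the resulting conflation $0\to M\to L'\to P\to 0$ has its $\mathcal{P}^{\leqslant 1}$-object in the \emph{quotient} position, so it is a $J(\A)$-relation, not an $I(\A)$-relation — which is the whole point. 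You flag this honestly as ``the main obstacle,'' but an obstacle you flag is still an obstacle; the step is not done. The paper avoids this entirely by citing the needed fact directly: ``By the statements following \cite[Lemma A8]{Lu2022}, we have $\lambda_{1}(J)=0$ in $\mathcal{SDH}(\A)$.'' You should do the same rather than attempt a reproof.

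There is also a structural asymmetry worth noting. You derive part (1) — that $\Phi_{\A}$ is a two-sided denominator set in $\HA/(I(\A)+J(\A))$ — as a formal consequence of the unproven claim and its opposite-category version, via the standard fact that a denominator set descends to a quotient by an ideal killed in the localization. That derivation is correct once the claim is granted (and your check of the Ore and denominator axioms for the image of $S$ in $R/I$ is sound), and it is arguably slicker than the paper's route. The paper instead proves (1) \emph{directly}: in $\HA/(I(\A)+J(\A))$ one has both $[M]\diamond[K]=q^{-\langle M,K\rangle}[M\oplus K]$ (from the $I$-relations) and $[K]\diamond[M]=q^{-\langle K,M\rangle}[K\oplus M]$ (from the $J$-relations), so $q^{\langle M,K\rangle}[M]\diamond[K]=q^{\langle K,M\rangle}[K]\diamond[M]$, which immediately makes $\Phi_{\A}$ a two-sided Ore set, and the denominator condition follows as in \cite[Proposition A5]{Lu2022}. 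This computation is independent of the vanishing claim and is worth having on its own, because it exhibits the two-sidedness of $\Phi_{\A}$ concretely. For part (2) your use of the ``localization commutes with killing an already-killed ideal'' principle is a clean alternative to the paper's explicit construction of inverse homomorphisms $\sigma$ and $\rho$ via universal properties; the two routes are equivalent, and yours is shorter once the claim is in hand. Net assessment: the skeleton of your argument is correct and in places tidier than the paper's, but the single load-bearing fact is left unproved; supply the citation to \cite[Lemma A8]{Lu2022} and the argument closes.
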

\begin{proof} (1) Let $H:=\mathcal{H}(A)$, $I:=I(\A)$ and $J:=J(\A)$.
For any $K\in{{\mathcal{P}^{\leqslant 1}(\A)}}$ and $M\in{\A}$, it follows from \cite[Lemma A4]{Lu2022} that $[M]\diamond[K]=q^{-\langle M,K\rangle}[M\oplus K]$ in $H/I$. Dually, $[K]\diamond[M]=q^{-\langle K,M\rangle}[M\oplus K]$ in $H/J$. Thus $q^{\langle M,K\rangle}[M]\diamond[K]=q^{\langle K,M\rangle}[K]\diamond[M]$
in $H/(I+J)$. This implies that $\Phi_{\A}$ is a left and right Ore subset of the algebra $H/(I+J)$. By a similar argument as in the proof of \cite[Proposition A5]{Lu2022}, one can further show that $\Phi_{\A}$ is a left and right denominator subset of $H/(I+J)$.

(2) By $(1)$, there is an isomorphism of algebras
${(H/(I+J))[\Phi^{-1}_{\mathcal{A}}]}\cong{[\Phi^{-1}_{\mathcal{A}}](H/(I+J))}.$
By Lemma \ref{lem:4.4}, it is enough to show the algebra isomorphism $$\mathcal{SDH}(\A)\cong{(H/(I+J))[\Phi^{-1}_{\mathcal{A}}]}.$$ The algebra isomorphism ${[\Phi^{-1}_{\mathcal{A}}](H/(I+J))}\cong{[\mathcal{R}^{-1}_{\mathcal{A}}](H/J)}$ can be proved dually.

Let  $\lambda_{1}:H/I\to\mathcal{SDH}(\A)$ and $\lambda:H/(I+J)\to(H/(I+J))[\Phi^{-1}_{\mathcal{A}}]$
be the localizations and let  $\pi_{1}:H/I\to H/(I+J)$ be the canonical surjection.
Since $\pi_{1}(\mathcal{S}_{\A})=\Phi_{\A}$, there is a unique homomorphism of algebras
$$\sigma:\mathcal{SDH}(\A) \to{(H/(I+J))[\Phi^{-1}_{\mathcal{A}}]}$$ such that $\sigma\lambda_{1}=\lambda\pi_{1}$. By the statements following
\cite[Lemma A8]{Lu2022}, we have $\lambda_{1}(J)=0$ in $\mathcal{SDH}(\A)$. Then there exists a unique homomorphism of algebras $$\widetilde{\lambda_{1}}:{(H/(I+J))[\Phi^{-1}_{\mathcal{A}}]}\to \mathcal{SDH}(\A)$$ such that $\lambda_{1}=\widetilde{\lambda_{1}}\pi_{1}$, and hence $\widetilde{\lambda_{1}}([K])$ is invertible in $\mathcal{SDH}(\A)$ for any $[K]\in{\Phi_{\A}}$. This implies that $\lambda$ induces a unique homomorphism of algebras $$\rho:{(H/(I+J))[\Phi^{-1}_{\mathcal{A}}]}\to \mathcal{SDH}(\A)$$ such that $\rho\lambda=\widetilde{\lambda_{1}}$. So we have following commutative diagram:
$$
\xymatrix@C=3em@R=3em{
  H/I \ar[d]_{\pi_{1}} \ar[r]^{\lambda_{1}}
                & \mathcal{SDH}(\A)\ar@<.5ex>@{.>}[d]^{\sigma}   \\
  H/(I+J)\ar@{.>}[ur]^{\widetilde{\lambda_{1}}}\ar[r]_{\lambda}
                & (H/(I+J))[\Phi^{-1}_{\mathcal{A}}]\ar@<.5ex>@{.>}[u]^{\rho}.}
$$
Since $\rho\sigma\lambda_{1}=\rho\lambda\pi_{1}=\widetilde{\lambda_{1}}\pi_{1}=\lambda_{1}$, it follows from the the universal property of $\lambda_{1}$ that $\rho\sigma=\textrm{Id}$. On the other hand,  $\sigma\rho\lambda\pi_{1}=\sigma\widetilde{\lambda_{1}}\pi_{1}=\sigma\lambda_{1}=\lambda\pi_{1}$. Since $\pi_{1}$ is surjective, $\sigma\rho\lambda=\lambda$.  By the universal property of $\lambda$, we have $\sigma\rho=\textrm{Id}$. Thus $\sigma$ and $\rho$ are isomorphisms of algebras. \end{proof}

Thanks to Lemma \ref{lem:4.5}, up to isomorphism of algebras, we can define the \emph{semi-derived Ringel-Hall algebra}
of $\A$ to be the algebra $(\HA/(I(\A)+J(\A)))[\Phi^{-1}_{\mathcal{A}}]$. This definition is left-right symmetric and applied to show the following result.

\begin{prop}\label{prop:4.6} Let $F:\mathcal{A}_{1}\to\mathcal{A}_{2}$ be a duality of weakly $1$-Gorenstein exact categories. Then there exists an isomorphism of algebras

\vspace{2mm}
\begin{center}
{$\Upsilon_{F}:\mathcal{SDH}(\mathcal{A}_{1})\s{\simeq}\longrightarrow (\mathcal{SDH}(\mathcal{A}_{2}))^{op}$ \\
\vspace{2mm}
$[M]\mapsto [F(M)]$.}
\end{center}
\end{prop}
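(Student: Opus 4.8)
The plan is to exploit the new left--right symmetric description of $\mathcal{SDH}$ furnished by Lemma \ref{lem:4.5}: the semi-derived Ringel--Hall algebra of a weakly $1$-Gorenstein exact category $\A$ can be computed as $(\HA/(I(\A)+J(\A)))[\Phi^{-1}_{\A}]$. First I would record how a duality $F:\mathcal{A}_1\to\mathcal{A}_2$ acts on the data entering this construction. Since $F$ is an exact equivalence $\mathcal{A}_1\simeq\mathcal{A}_2^{op}$, it induces a bijection $\mathrm{Iso}(\mathcal{A}_1)\to\mathrm{Iso}(\mathcal{A}_2)$, $[M]\mapsto[F(M)]$, interchanges inflations with deflations, and satisfies $\Ext^1_{\mathcal{A}_1}(M,N)\cong\Ext^1_{\mathcal{A}_2}(F(N),F(M))$ and $\Hom_{\mathcal{A}_1}(M,N)\cong\Hom_{\mathcal{A}_2}(F(N),F(M))$, compatibly with the subsets $\Ext^1(M,N)_L$ parameterizing extensions with prescribed middle term (an extension $0\to K\to L\to M\to 0$ in $\mathcal{A}_1$ goes to $0\to F(M)\to F(L)\to F(K)\to 0$ in $\mathcal{A}_2$). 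Hence the $\mathbb{Q}$-linear map $[M]\mapsto[F(M)]$ is an \emph{anti}-isomorphism of Ringel--Hall algebras $\HA[\mathcal{A}_1]\to\HA[\mathcal{A}_2]$, i.e. an isomorphism $\HA[\mathcal{A}_1]\to\HA[\mathcal{A}_2]^{op}$; this is where one uses that the Hall product counts extensions, whose direction is reversed by $F$.

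Next I would check that this isomorphism carries the relevant ideals and localizing sets to each other. By Lemma \ref{1-weak-G}-type bookkeeping (really just the definitions of $\Ext$-$\mathrm{proj.dim}$ and $\Ext$-$\mathrm{inj.dim}$ via $\mathscr{D}(\A)$), the equivalence $\mathcal{A}_1\simeq\mathcal{A}_2^{op}$ induces $\mathscr{D}(\mathcal{A}_1)\simeq\mathscr{D}(\mathcal{A}_2)^{op}$, so $F$ sends $\mathcal{P}^{\leqslant 1}(\mathcal{A}_1)$ to $\mathcal{I}^{\leqslant 1}(\mathcal{A}_2)$ and $\mathcal{I}^{\leqslant 1}(\mathcal{A}_1)$ to $\mathcal{P}^{\leqslant 1}(\mathcal{A}_2)$. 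Consequently the defining generators of $I(\mathcal{A}_1)$ (using exact sequences with first term in $\mathcal{P}^{\leqslant 1}$) are sent, under $[M]\mapsto[F(M)]$, precisely to the generators of $J(\mathcal{A}_2)$ (exact sequences with \emph{last} term in $\mathcal{I}^{\leqslant 1}$), and symmetrically $J(\mathcal{A}_1)\mapsto I(\mathcal{A}_2)$. Therefore the anti-isomorphism descends to an isomorphism $\HA[\mathcal{A}_1]/(I(\mathcal{A}_1)+J(\mathcal{A}_1)) \xrightarrow{\ \simeq\ } \big(\HA[\mathcal{A}_2]/(I(\mathcal{A}_2)+J(\mathcal{A}_2))\big)^{op}$, and it maps the multiplicative set $\Phi_{\mathcal{A}_1}$ (classes of objects in $\mathcal{P}^{\leqslant 1}(\mathcal{A}_1)$) bijectively onto $\Phi_{\mathcal{A}_2}$ (classes of objects in $\mathcal{P}^{\leqslant 1}(\mathcal{A}_2)$), because $F$ restricts to an equivalence $\mathcal{P}^{\leqslant 1}(\mathcal{A}_1)\simeq \mathcal{I}^{\leqslant 1}(\mathcal{A}_2)= \mathcal{P}^{\leqslant 1}(\mathcal{A}_2)$, the last equality being the weak $1$-Gorenstein hypothesis on $\mathcal{A}_2$.

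Finally I would invoke the universal property of Ore localization: an isomorphism of rings sending a two-sided denominator set to a two-sided denominator set extends uniquely to an isomorphism of the localizations, and it respects forming the opposite ring (a left denominator set in $R$ is a right denominator set in $R^{op}$ and $(R[S^{-1}])^{op}\cong [S^{-1}](R^{op})$ up to the canonical identification, already noted in the excerpt). Applying this to the isomorphism from the previous paragraph and the sets $\Phi_{\mathcal{A}_1},\Phi_{\mathcal{A}_2}$, and then identifying both localizations with the respective $\mathcal{SDH}$ via Lemma \ref{lem:4.5}, yields the desired algebra isomorphism
$$\Upsilon_F:\mathcal{SDH}(\mathcal{A}_1)\ \xrightarrow{\ \simeq\ }\ (\mathcal{SDH}(\mathcal{A}_2))^{op},\qquad [M]\mapsto[F(M)].$$
The main obstacle is the second paragraph: one must verify carefully that $F$ really does interchange $I$ with $J$ and $\Phi_{\mathcal{A}_1}$ with $\Phi_{\mathcal{A}_2}$, which hinges on the fact that $\mathcal{P}^{\leqslant 1}$ and $\mathcal{I}^{\leqslant 1}$ are swapped by the duality \emph{and} that they coincide inside each $\mathcal{A}_i$ by weak $1$-Gorenstein-ness; without the new symmetric presentation of $\mathcal{SDH}$ this compatibility would be awkward, since the original definition localizes only at $\mathcal{P}^{\leqslant 1}$-classes and the duality turns those into $\mathcal{I}^{\leqslant 1}$-classes on the other side. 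Everything else is a formal consequence of the universal property of Ore localization and the bookkeeping of Hall products under an exact anti-equivalence.
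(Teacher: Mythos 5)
Your proposal is correct and follows essentially the same route as the paper's proof: the paper also observes that $F$ gives an equivalence $\mathcal{A}_1\simeq\mathcal{A}_2^{op}$ of weakly $1$-Gorenstein exact categories, hence an isomorphism $\mathcal{SDH}(\mathcal{A}_1)\cong\mathcal{SDH}(\mathcal{A}_2^{op})$, and then appeals to Lemma \ref{lem:4.5}(2) for $\mathcal{SDH}(\mathcal{A}_2^{op})\cong(\mathcal{SDH}(\mathcal{A}_2))^{op}$. You have merely unpacked the black box ``$F$ induces an isomorphism of $\mathcal{SDH}$'' into its Hall-algebra, ideal, and Ore-localization constituents, and absorbed the Lemma \ref{lem:4.5}(2) step into that unpacking via the symmetric presentation $(\mathcal{H}(\A)/(I(\A)+J(\A)))[\Phi_{\A}^{-1}]$ rather than applying it as a separate final step; the mathematical content is identical.
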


\begin{proof}
Note that $F$ induces an equivalence $\mathcal{A}_{1}\s{\simeq}\longrightarrow {\mathcal{A}_{2}}^{op}$ of weakly $1$-Gorenstein exact categories. Then $F$ induces an isomorphism of algebras: $\mathcal{SDH}(\mathcal{A}_{1})\s{\simeq}\longrightarrow{\mathcal{SDH}({\mathcal{A}_{2}}^{op})}.$ By Lemma \ref{lem:4.5}(2), ${\mathcal{SDH}({\mathcal{A}_{2}}^{op})}\simeq (\mathcal{SDH}(\mathcal{A}_2))^{op}$. Thus Proposition \ref{prop:4.6} holds.
\end{proof}

\subsection{Invariance of semi-derived Ringel-Hall algebras under tilting} \label{Ringel-Hall-algebras-Gorenstein}
In the section, our main result is the following theorem which contains Corollary \ref{Cor-1}(3) in the introduction.

\begin{thm}\label{thm:4.10} Let $A$ be a finite-dimensional algebra over $k$ and $_{A}T_{B}$  a tilting bimodule with $\pd(_AT)\leqslant1$. Then there exists an isomorphism of algebras:
\vspace{2mm}
\begin{center}
{$\Xi:\;\mathcal{SDH}(\mathcal{GP}(A))\s{\simeq}\longrightarrow \mathcal{SDH}(\mathcal{GP}(B))$ \\
\vspace{2mm}
$ [G]\mapsto q^{-\langle L,G\rangle}[\Hom_{A}(T,L)]^{-1}\diamond[\Hom_{A}(T,Z)]$,}
\end{center}
where $f:G\to Z$ is a  minimal left $(_{A}T)^{\perp}$-approximation of $G$ and
$L={\rm Coker}(f)$.
\end{thm}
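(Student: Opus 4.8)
The plan is to realize $\Xi$ as a composite of three algebra isomorphisms: a \emph{d\'evissage} isomorphism replacing $\mathcal{GP}(A)$ by a larger weakly $1$-Gorenstein exact category, the isomorphism induced by the resolving duality $\Hom_A(-,T)$ through Proposition~\ref{prop:4.6}, and the isomorphism induced by the standard duality $\Hom_{B^{op}}(-,B)$ through Proposition~\ref{prop:4.6} again. First I would put $\A:={^{\perp}(_AT)}\cap\mathcal{GP}^{\leqslant 1}(A)$ and $\B:={^{\perp}(T_B)}\cap\mathcal{GP}^{\leqslant 1}(B^{op})$. Since $\pd(_AT)\leqslant 1$, Corollary~\ref{cor:3.5}(3) gives $\A=\mathcal{W}(_AT)$ and $\B=\mathcal{W}(T_B)$; by Lemma~\ref{1-weak-G} these are weakly $1$-Gorenstein exact categories satisfying (E-a)--(E-d) (for $\B$ one may instead invoke that a duality of exact categories preserves weak $1$-Gorensteinness). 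Note $\mathcal{GP}(A)\subseteq\A\subseteq\mathcal{GP}^{\leqslant 1}(A)$, $\mathcal{GP}(B^{op})\subseteq\B\subseteq\mathcal{GP}^{\leqslant 1}(B^{op})$, and that by Corollary~\ref{GV}(1) the restricted functor $\Hom_A(-,T):\A\to\B$ is a duality of exact categories.

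The key new ingredient is a semi-derived analogue of the resolution theorem (Lemma~\ref{lemmma:5.1}): if $\mathcal{F}\hookrightarrow\mathcal{E}$ is an inclusion of weakly $1$-Gorenstein exact categories with $\mathcal{F}$ closed under extensions and under kernels of deflations in $\mathcal{E}$, and every object of $\mathcal{E}$ has a finite $\mathcal{F}$-resolution by conflations of $\mathcal{E}$, then the inclusion induces an isomorphism $\mathcal{SDH}(\mathcal{F})\xrightarrow{\ \simeq\ }\mathcal{SDH}(\mathcal{E})$. For $\mathcal{GP}(A)\hookrightarrow\A$ and $\mathcal{GP}(B^{op})\hookrightarrow\B$ the hypotheses are immediate, since $\mathcal{GP}(A)$ is resolving in $A\modcat$ and every object of $\A$, lying in $\mathcal{GP}^{\leqslant 1}(A)$, has a length-$\leqslant 1$ Gorenstein-projective resolution (Lemma~\ref{lemma:2.1}). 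To prove this lemma one checks that the canonical ring map $\mathcal{H}(\mathcal{F})\to\mathcal{SDH}(\mathcal{E})$ annihilates $I(\mathcal{F})$ and $J(\mathcal{F})$ and inverts $\mathcal{S}_{\mathcal{F}}$, hence factors through $\mathcal{SDH}(\mathcal{F})$; surjectivity and injectivity of the resulting map are then read off from the short exact sequences $0\to X\to H\to G'\to 0$ with $H\in{^{\perp}(_AT)}\cap\mathcal{P}^{\leqslant 1}(A)=\mathcal{P}^{<\infty}(\A)$ and $G'\in\mathcal{GP}(A)$ supplied by \cite[Lemma~2.17]{cfh}, together with the multiplicativity identities of \cite[Lemma~A4]{Lu2022} and \cite[Proposition~A5]{Lu2022} governing how classes of $\mathcal{P}^{\leqslant 1}$-objects interact with the Hall product after localization. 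Combining this with Proposition~\ref{prop:4.6} applied to $\Hom_A(-,T):\A\to\B$ and to $\Hom_{B^{op}}(-,B):\mathcal{GP}(B^{op})\to\mathcal{GP}(B)$ yields the chain
\[\mathcal{SDH}(\mathcal{GP}(A))\cong\mathcal{SDH}(\A)\cong\mathcal{SDH}(\B)^{op}\cong\mathcal{SDH}(\mathcal{GP}(B^{op}))^{op}\cong\mathcal{SDH}(\mathcal{GP}(B)).\]

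It remains to make the composite explicit. Given $G\in\mathcal{GP}(A)$, take a minimal left $(_AT)^{\perp}$-approximation $f:G\to Z$; since $\pd(_AT)\leqslant 1$ one has $(_AT)^{\perp}=\Gen(_AT)$, and the Bongartz-type construction of such approximations shows that $f$ is a monomorphism with $L:={\rm Coker}(f)\in\add(_AT)$, whence from $0\to G\to Z\to L\to 0$ it follows that $Z\in{^{\perp}(_AT)}\cap\mathcal{GP}^{\leqslant 1}(A)=\A$. Under the d\'evissage isomorphism $[G]\mapsto[G]\in\mathcal{SDH}(\A)$; applying $\Hom_A(-,T)$ to the conflation $0\to G\to Z\to L\to 0$, d\'evissaging back down into $\mathcal{GP}(B^{op})$, and applying $\Hom_{B^{op}}(-,B)$ identifies the images of $[Z]$ and $[L]$ — via the Brenner--Butler equivalence induced by $\Hom_A(T,-)$ on the torsion class $\Gen(_AT)=(_AT)^{\perp}$ — with $[\Hom_A(T,Z)]$ and $[\Hom_A(T,L)]$, the latter projective over $B$ and hence invertible in $\mathcal{SDH}(\mathcal{GP}(B))$; the relation $[G]\diamond[L]=q^{-\langle G,L\rangle}[G\oplus L]$ in $\mathcal{SDH}(\A)$, its counterpart on the $B$-side, and additivity of the Euler form on $0\to G\to Z\to L\to 0$ then collapse the accumulated twists to the single scalar $q^{-\langle L,G\rangle}$. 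Verifying that the assignment $[G]\mapsto q^{-\langle L,G\rangle}[\Hom_A(T,L)]^{-1}\diamond[\Hom_A(T,Z)]$ is independent of the chosen approximation and multiplicative then reduces to the fact, already established, that each of the three constituent maps is an algebra homomorphism.

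The main obstacle is the semi-derived resolution theorem: unlike in $K$-theory, where Lemma~\ref{lemmma:5.1} is available, here one has to argue directly with the Ore-localized Hall algebra $\mathcal{SDH}(\A)$ and show that enlarging $\mathcal{GP}(A)$ to $\mathcal{W}(_AT)$ — which enlarges the set of invertible generators from $\mathcal{P}(A)$ to ${^{\perp}(_AT)}\cap\mathcal{P}^{\leqslant 1}(A)$ — alters neither the relations nor the localization up to isomorphism. A secondary difficulty is the bookkeeping in the last step: pinning down the precise Brenner--Butler identification and the exact power of $q$ so that the composite matches the displayed closed formula for $\Xi$ verbatim.
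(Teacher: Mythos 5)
Your overall strategy matches the paper's exactly: both realize $\Xi$ as the composite
\[\mathcal{SDH}(\mathcal{GP}(A))\;\xrightarrow{\;\widetilde{\phi}_A\;}\;\mathcal{SDH}(\A_1)\;\xrightarrow{\;\Upsilon_F\;}\;(\mathcal{SDH}(\A_2))^{op}\;\xrightarrow{\;(\widetilde{\psi}_{B^{op}})^{op}\;}\;(\mathcal{SDH}(\mathcal{GP}(B^{op})))^{op}\;\xrightarrow{\;(\Upsilon_G)^{op}\;}\;\mathcal{SDH}(\mathcal{GP}(B))\]
with $\A_1={^\perp(_AT)}\cap\mathcal{GP}^{\leqslant 1}(A)$, $\A_2={^\perp(T_B)}\cap\mathcal{GP}^{\leqslant 1}(B^{op})$, the middle arrows via Proposition~\ref{prop:4.6}, and the outer arrows being the d\'evissage you describe. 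Your ``semi-derived resolution theorem'' is, up to phrasing, precisely Proposition~\ref{prop:4.7} of the paper, and the closed formula is extracted by the same pushout construction of the minimal left $(_AT)^\perp$-approximation $f:G\to Z$ together with the isomorphisms $GF(L)\cong\Hom_A(T,L)$, $GF(Z)\cong\Hom_A(T,Z)$ and $\langle F(G),F(L)\rangle=\langle L,G\rangle$ supplied by Lemma~\ref{lemma:3.1} (you invoke Brenner--Butler, which amounts to the same restriction of the duality functor).

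The genuine gap is exactly the step you flag as the ``main obstacle'': you state the d\'evissage as a general lemma (an inclusion $\mathcal{F}\hookrightarrow\mathcal{E}$ of weakly $1$-Gorenstein exact categories with finite $\mathcal{F}$-resolutions inducing $\mathcal{SDH}(\mathcal{F})\cong\mathcal{SDH}(\mathcal{E})$) and assert that ``surjectivity and injectivity are read off'' from the short exact sequences. That is too optimistic. Surjectivity is the easy direction; the paper also only sketches it by pointing to the argument of \cite[Theorem~A15]{Lu2022}. Injectivity, however, is not a formal consequence of resolutions: the paper's Proposition~\ref{prop:4.7} has to \emph{construct} an explicit inverse
$\widetilde{\psi}:[M]\mapsto q^{-\langle M,H_M\rangle}[G_M]\diamond[H_M]^{-1}$,
prove it is well-defined, show it is an algebra homomorphism (the long Euler-form computation culminating in equation~\eqref{4.6}), and check it factors through $I(\A)$ and through the Ore localization. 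Crucially, this formula only makes sense because the length-$1$ resolution has the form $0\to H_M\to G_M\to M\to 0$ with $H_M\in\add(_AA)$ — i.e.\ the kernel is \emph{projective}, hence $[H_M]$ is invertible — not merely $H_M\in\mathcal{GP}(A)$ as a generic $\mathcal{F}$-resolution would give. So the hypotheses you propose for the general lemma (closure under extensions and kernels, finite $\mathcal{F}$-resolutions) are not by themselves enough to produce the inverse, and you also cite the ``wrong'' short exact sequence for this purpose — the coresolution $0\to X\to H\to G'\to 0$ from \cite[Lemma~2.17]{cfh} is used elsewhere in the paper (in Lemma~\ref{prop:3.3} and Corollary~\ref{corollary:4.9}), whereas $\widetilde{\psi}$ needs the \emph{resolution} $0\to P\to G\to M\to 0$ with $P$ projective, which comes directly from Gorenstein dimension $\leqslant 1$ via Lemma~\ref{lemma:2.1}. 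In short: your outline is right and agrees with the paper, but the d\'evissage isomorphism is where essentially all the work lives, and as written your proposal does not supply it.
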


When both $A$ and $B$ are $1$-Gorenstein algebras, Theorem \ref{thm:4.10} is exactly \cite[Corollary A23]{Lu2022}. To show Theorem \ref{thm:4.10} for general algebras, we establish a crucial result as follows.

\begin{prop}\label{prop:4.7} Let $A$ be a finite-dimensional algebra over $k$ and $_{A}T_{B}$ a tilting bimodule with $\pd(_AT)\leqslant1$. Set $\A:={^{\perp}(_{A}T)}\cap{\mathcal{GP}^{\leqslant 1}(A)}$ and $\B:=\mathcal{GP}(A)$. Then the embedding $\phi:\HB\to \HA$ induces an algebra isomorphism
$\widetilde{\phi}:\mathcal{SDH}(\B)\to\mathcal{SDH}(\A).$
Furthermore, the inverse of $\widetilde{\phi}$ is given by $\widetilde{\psi}:[M]\mapsto{q^{-\langle M,H_{M}\rangle}[G_{M}]\diamond[H_{M}]^{-1}}$, where $M\in{\A}$, $G_{M}\in{\B}$ and $H_{M}\in{\add(_{A}A)}$ such that they fit into a short exact sequence  $0\to H_{M}\to G_{M}\to M \to 0$ of $A$-modules.
\end{prop}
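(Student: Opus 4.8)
\textbf{Proof proposal for Proposition \ref{prop:4.7}.}

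The plan is to show that $\phi:\HB\to\HA$ descends to the localizations $\mathcal{SDH}(\B)\to\mathcal{SDH}(\A)$ and then to produce an explicit two-sided inverse. First I would verify that $\phi$ is compatible with the relevant ideals and denominator sets. Recall from Lemma \ref{1-weak-G} that $\A=\mathcal{W}(_{A}T)$ is a weakly $1$-Gorenstein exact category whose projective objects are the projective $A$-modules and whose $\mathcal{P}^{<\infty}(\A)={^{\perp}(_{A}T)}\cap\mathcal{P}^{\leqslant 1}(A)$; in particular $\add(_AA)\subseteq\B\subseteq\A$, and $\B=\mathcal{GP}(A)$ is a Frobenius category with $\mathcal{P}^{\leqslant 1}(\B)=\add(_AA)$. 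Since $\B$ is closed under extensions in $\A$ and every conflation in $\B$ is a conflation in $\A$, the inclusion is exact and induces a $\mathbb{Q}$-algebra homomorphism $\phi:\HB\to\HA$ sending $[M]\mapsto[M]$; the Hall numbers agree because the parametrizing sets $\Ext^{1}_{\B}(M,N)_{L}=\Ext^{1}_{\A}(M,N)_{L}$ coincide for $M,N,L\in\B$ (extensions of Gorenstein-projectives are Gorenstein-projective). Next, $\phi$ sends the defining generators of $I(\B)$ (built from conflations $0\to K\to L\to M\to0$ with $K\in\add(_AA)$) and of $J(\B)$ (the dual, with $K\in\add(_AA)$, noting $\mathcal{I}^{\leqslant 1}(\B)=\add(_AA)$ for the Frobenius category $\B$) into $I(\A)+J(\A)$, because $\add(_AA)\subseteq\mathcal{P}^{\leqslant 1}(\A)$ and $\mathcal{P}^{\leqslant 1}(\A)=\mathcal{I}^{\leqslant 1}(\A)$. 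Hence $\phi$ induces $\overline{\phi}:\HB/(I(\B)+J(\B))\to\HA/(I(\A)+J(\A))$, and since $\overline{\phi}(\Phi_{\B})\subseteq\Phi_{\A}$ (both are generated by classes of modules in $\add(_AA)$), the universal property of Ore localization gives the algebra map $\widetilde{\phi}:\mathcal{SDH}(\B)\to\mathcal{SDH}(\A)$, using the left-right symmetric description of $\mathcal{SDH}$ from Lemma \ref{lem:4.5}.

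To build the inverse, I would use the key structural fact from \cite[Lemma 2.17]{cfh} (already invoked in Lemma \ref{prop:3.3}): every $M\in\A={^{\perp}(_{A}T)}\cap\mathcal{GP}^{\leqslant 1}(A)$ fits into a short exact sequence $0\to H_{M}\to G_{M}\to M\to 0$ with $H_{M}\in\mathcal{P}^{\leqslant 1}(A)$ and $G_{M}\in\mathcal{GP}(A)$; since $M\in{^{\perp}(_{A}T)}$ and $\add(_AT)$-coresolution arguments as in the proof of Lemma \ref{prop:3.3}(3) force $G_{M}\in\mathcal{W}(_{A}T)$ too, and since $H_M\in\mathcal{P}^{\leqslant1}(A)$ we may in fact arrange $H_M\in\add(_AA)$ (replacing $G_M$ by a pullback along a projective cover of $H_M$ and using that $\mathcal{GP}(A)$ is resolving). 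In the Hall algebra $\HA$, modulo $J(\A)$ (the ideal governing the left Ore set and coming from conflations with injective-dimension-$\leqslant1$ kernel on the \emph{right}), such a conflation yields, via the dual of \cite[Lemma A4]{Lu2022}, the identity $[G_{M}]=q^{\langle M,H_{M}\rangle}\,[H_{M}]\diamond[M]$ in $\HA/J(\A)$; inverting $[H_M]$ this reads $[M]=q^{-\langle M,H_{M}\rangle}[H_{M}]^{-1}\diamond[G_{M}]$ in $\mathcal{SDH}(\A)$. Rewriting using the left-right symmetric presentation gives $[M]=q^{-\langle M,H_{M}\rangle}[G_{M}]\diamond[H_{M}]^{-1}$. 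I would then \emph{define} $\psi$ on generators by $\psi([M]):=q^{-\langle M,H_{M}\rangle}[G_{M}]\diamond[H_{M}]^{-1}$, landing in $\mathcal{SDH}(\B)$ (note $G_{M},H_{M}\in\B$ and $[H_M]\in\Phi_{\B}$), and check that this is well defined: the class $[G_M]\diamond[H_M]^{-1}$ in $\mathcal{SDH}(\B)$ is independent of the chosen conflation because any two such conflations are related by the standard comparison (a direct-sum stabilization plus an isomorphism), and in $\mathcal{SDH}(\B)$ adding a projective summand $P\in\add(_AA)$ to both $G_M$ and $H_M$ multiplies numerator and denominator by $[P]$, with the Euler-form exponent adjusting correctly; this is exactly the computation encapsulated in \cite[Lemma A4]{Lu2022} and the discussion after \cite[Lemma A8]{Lu2022}.

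It remains to check $\psi$ is an algebra homomorphism and that $\widetilde\phi,\widetilde\psi$ are mutually inverse. For multiplicativity of $\psi$: given $M,N\in\A$ and a conflation $0\to N\to L\to M\to 0$ in $\A$, I would pull back the chosen resolutions of $M$ and $N$ to a resolution $0\to H_M\oplus H_N\to G_L\to L\to 0$ of $L$ with $G_L\in\B$ fitting in $0\to G_N\to G_L\to G_M\to0$ (horseshoe lemma in the resolving subcategory $\B$), and compare the Hall products $[G_M]\diamond[G_N]$ and $[L]$-contributions in $\HB/(I(\B)+J(\B))$; the Euler-form bookkeeping is the same as in \cite{Lu2022}. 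That $\widetilde\psi\circ\widetilde\phi=\mathrm{Id}_{\mathcal{SDH}(\B)}$ follows because for $G\in\B$ we may take the trivial conflation $0\to 0\to G\to G\to 0$ (so $H_G=0$, $G_G=G$), giving $\widetilde\psi([G])=[G]$; that $\widetilde\phi\circ\widetilde\psi=\mathrm{Id}_{\mathcal{SDH}(\A)}$ is precisely the identity $[M]=q^{-\langle M,H_{M}\rangle}[G_{M}]\diamond[H_{M}]^{-1}$ established above, now read inside $\mathcal{SDH}(\A)$ after applying $\widetilde\phi$. Finally, the displayed formula for $\widetilde\psi$ in the statement is this same expression. \textbf{The main obstacle} I anticipate is the well-definedness of $\psi$: one must show the value $q^{-\langle M,H_{M}\rangle}[G_{M}]\diamond[H_{M}]^{-1}$ genuinely does not depend on the choice of conflation $0\to H_M\to G_M\to M\to0$ (and that it is multiplicative), which requires carefully tracking the Euler-form twists $\langle-,-\rangle$ through the localization — this is where the weakly $1$-Gorenstein hypothesis and the symmetric presentation of $\mathcal{SDH}$ from Lemma \ref{lem:4.5} are essential, since the relations in $I(\A)$ and $J(\A)$ must both be available to absorb projective summands on either side. $\hfill\Box$
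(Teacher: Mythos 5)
The overall architecture of your proposal—descend $\phi$ to the localizations and exhibit the explicit inverse $\widetilde\psi$—matches the paper, but you deviate in how the two maps are shown to be mutually inverse (you try to verify both compositions directly, whereas the paper proves $\widetilde\phi$ surjective by citing the argument of \cite[Theorem A15]{Lu2022} and then only checks $\widetilde\psi\widetilde\phi=\mathrm{Id}$). That strategy is legitimate in principle, but there are two genuine problems in your execution.

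First, your derivation of the identity $[M]=q^{-\langle M,H_M\rangle}[G_M]\diamond[H_M]^{-1}$ in $\mathcal{SDH}(\A)$ is wrong. You apply ``the dual of Lemma A4'' in $\HA/J(\A)$, but the conflation $0\to H_M\to G_M\to M\to 0$ has its $\mathcal{P}^{\leqslant 1}(\A)$-object $H_M$ as the \emph{kernel}, not the cokernel, so it yields an $I(\A)$-relation, not a $J(\A)$-relation; the $J(\A)$-relations require the cokernel to lie in $\mathcal{I}^{\leqslant 1}(\A)$, which $M$ does not. Moreover, your ``rewriting using the left-right symmetric presentation'' from $[H_M]^{-1}\diamond[G_M]$ to $[G_M]\diamond[H_M]^{-1}$ silently discards the commutation twist: in $\mathcal{SDH}(\A)$ one has $[H_M]^{-1}\diamond[G_M]=q^{\langle H_M,G_M\rangle-\langle G_M,H_M\rangle}[G_M]\diamond[H_M]^{-1}$, which is not $1$ in general. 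The correct and shorter route (which is the one the paper implicitly uses) stays in $\HA/I(\A)$: from the $I(\A)$-relation $[G_M]=[M\oplus H_M]$ and \cite[Lemma A4]{Lu2022} one gets $[M]\diamond[H_M]=q^{-\langle M,H_M\rangle}[M\oplus H_M]=q^{-\langle M,H_M\rangle}[G_M]$, whence $[M]=q^{-\langle M,H_M\rangle}[G_M]\diamond[H_M]^{-1}$.

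Second, the heart of the paper's proof is showing that $\psi$ is a \emph{homomorphism} of algebras, i.e.\ the equality \eqref{4.0}=\eqref{4.6}. This is a nontrivial diagram chase: one compares $\Ext^1_A(M,N)$ with $\Ext^1_A(G_1,G_2)$ via the map $\Theta$ of \eqref{4.3}, computes $|\ker\Theta|$ via a four-term exact sequence, matches Hall numbers via \eqref{4.8}, and tracks Euler-form exponents through several rewritings. Your proposal compresses all of this to ``the Euler-form bookkeeping is the same as in \cite{Lu2022},'' which does not establish multiplicativity and is precisely the substantive new computation in the paper. A smaller but real issue: you misattribute the existence of the sequence $0\to H_M\to G_M\to M\to 0$ (with $H_M$ projective and $G_M$ Gorenstein-projective) to \cite[Lemma 2.17]{cfh}, which actually furnishes the coresolution $0\to M\to H\to G\to 0$; the resolution form is the Auslander--Buchweitz special approximation, obtained (for $\mathrm{Gpd}(M)\leqslant 1$) by taking $0\to\Omega^1(M)\to P_0\to M\to 0$ with $\Omega^1(M)\in\mathcal{GP}(A)$, embedding $\Omega^1(M)$ into a projective with Gorenstein-projective cokernel, and forming the pushout—not the pullback you describe.
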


\begin{proof} Clearly, $\B$ is weakly $1$-Gorenstein satisfying  {\rm(E-a)-(E-d)}. By Lemma  \ref{1-weak-G} and its proof, $\A$ is also weakly $1$-Gorenstein satisfying  {\rm(E-a)-(E-d)} and ${\mathcal{P}^{{\leqslant 1}}(\A)}={^{\bot}(_{A}T)}\cap\mathcal{P}^{\leqslant 1}(A)$.  This means that $\mathcal{SDH}(\B)$ and $\mathcal{SDH}(\A)$ are well defined.
Moreover, since each object $M\in\A$ has Gorenstein dimension at most $1$, the exact sequence in Proposition \ref{prop:4.7} always exists. By a similar proof of \cite[Theorem A15]{Lu2022}, one can check that $\widetilde{\phi}$ is a surjective homomorphism of algebras and  the map $$\psi:\HA\to\mathcal{SDH}(\B),\;\;[M]\mapsto{q^{-\langle M,H_{M}\rangle}[G_{M}]\diamond[H_{M}]^{-1}}$$ is well defined.
Next we claim that $\psi$ is a homomorphism of algebras. It suffices to show that
\begin{equation}\label{4.0}\psi([M]\diamond[N])=\psi([M])\diamond\psi([N])
\end{equation}
for all $M,N\in{\A}$.

For this aim, we  fix two exact sequences in $A$-mod:
\begin{equation}\label{4.1}0\to H_{1}\to G_{1}\to M\to0,
\end{equation}
\begin{equation}\label{4.2} 0\to H_{2}\to G_{2}\to N\to0,
\end{equation}
where $H_{1},H_{2}\in{\add(_{A}A)}$ and $G_{1},G_{2}\in{\B}$. Applying $\Hom_{A}(-,N)$ and $
\Hom_{A}(G_1,-)$ to the sequences \eqref{4.1} and \eqref{4.2}, respectively,  we obtain the following diagram
\begin{equation}\label{4.3}
\begin{aligned}
\xymatrix{
  \Ext_{A}^{1}(M,N) \ar[dr] \ar[r]^{\Theta}
                & \Ext_{A}^{1}(G_{1},G_{2}) \ar[d]^{\cong}\ar[r] &0 \\
                & \Ext_{A}^{1}(G_{1},N). }
\end{aligned}
\end{equation}
Now, let $0\to N\to L\to M\to 0$ be an exact sequence in $\A$. By \eqref{4.3},  there is  commutative diagram with exact rows and columns:
\begin{equation}\label{4.4}
\begin{aligned}
\xymatrix{&0\ar[d]&0\ar[d]&0\ar[d]\\
0\ar[r]&H_2\ar[r]\ar[d]&H_{L}\ar[d]\ar[r]&H_1\ar[d]\ar[r]&0\\
0\ar[r]&G_2\ar[r]\ar[d]&G_{L}\ar[d]\ar[r]&G_1\ar[d]\ar[r]&0\\
0\ar[r]&N\ar[r]\ar[d]&L\ar[d]\ar[r]&M\ar[d]\ar[r]&0\\
&0&0&\;0.}
\end{aligned}
\end{equation}
Note that $\langle L,H_{L}\rangle={\langle M+N,H_{1}+H_{2}\rangle}={\langle M,H_{1}\rangle}+{\langle M,H_{2}\rangle}+{\langle N,H_{1}\rangle}+{\langle N,H_{2}\rangle}$. Since $\A$ and $\B$ are fully exact subcategories of $A\modcat$, there are equalities
\begin{center}
{$|\Ext_{\A}^{1}(X_1,X_2)|=|\Ext_{A}^{1}(X_1,X_2)|$, \quad $|\Hom_{\A}(X_1,X_2)|=|\Hom_{A}(X_1,X_2)|$,

\vspace{2mm}
$|\Ext_{\B}^{1}(Y_1,Y_2)|=|\Ext_{A}^{1}(Y_1,Y_2)|$, \quad $|\Hom_{\B}(Y_1,Y_2)|=|\Hom_{A}(Y_1,Y_2)|$ }
\end{center}
for all $X_1,X_2\in{\A}$ and $Y_1,Y_2\in{\B}$.
Thus
\begin{equation*}
\begin{split}
\psi([M]\diamond[N])&=\psi(\sum_{[L]\in{{\rm Iso}(\A)}}\displaystyle\frac{|\Ext_{A}^{1}(M,N)_{L}|}{|\Hom_{A}(M,N)|}[L])\\
&=\sum_{[L]\in{{\rm Iso}(\A)}}\displaystyle\frac{|\Ext_{A}^{1}(M,N)_{L}|}{|\Hom_{A}(M,N)|}\displaystyle q^{-\langle L,H_{L}\rangle}[G_{L}]\diamond[H_{L}]^{-1}\\
&=(\sum_{[L]\in{{\rm Iso}(\A)}}\displaystyle\frac{|\Ext_{A}^{1}(M,N)_{L}|}{|\Hom_{A}(M,N)|}q^{-\langle L,H_{L}\rangle}[G_{L}])\diamond[H_{1}\oplus H_{2}]^{-1}\\
&=(\sum_{[L]\in{{\rm Iso}(\A)}}\displaystyle\frac{|\Ext_{A}^{1}(M,N)_{L}|}{|\Hom_{A}(M,N)|}q^{-{\langle M,H_{1}\rangle}-{\langle M,H_{2}\rangle}-{\langle N,H_{1}\rangle}-{\langle N,H_{2}\rangle}}[G_{L}])\diamond[H_{1}\oplus H_{2}]^{-1}.
\end{split}
\end{equation*}
Since  $q^{\langle H_1,G_{2}\rangle}[H_1]\diamond[G_2]=[H_1\oplus G_2]=q^{\langle G_2,H_{1}\rangle}[G_2]\diamond[H_1]$, it follows that
\begin{equation*}
\begin{split}
\psi([M]) \diamond \psi([N])&=q^{-{\langle M,H_{1}\rangle}-{\langle N,H_{2}\rangle}}[G_1]\diamond{[H_1]^{-1}}\diamond[G_2]\diamond{[H_2]^{-1}}\\
&=q^{-{\langle M,H_{1}\rangle}-{\langle N,H_{2}\rangle}+{\langle H_{1},G_{2}\rangle}-{\langle G_{2},H_{1}\rangle}}[G_1]\diamond[G_2]\diamond{[H_1]^{-1}}\diamond{[H_2]^{-1}}\\
&=q^{-{\langle M,H_{1}\rangle}-{\langle N,H_{2}\rangle}+{\langle H_{1},G_{2}\rangle}-{\langle G_{2},H_{1}\rangle}}[G_1]\diamond[G_2]\diamond([H_2]\diamond[H_1])^{-1}\\
&=q^{-{\langle M,H_{1}\rangle}-{\langle N,H_{2}\rangle}+{\langle H_{1},G_{2}\rangle}-{\langle G_{2},H_{1}\rangle}+{\langle H_{2},H_{1}\rangle}}[G_1]\diamond[G_2]\diamond[H_2\oplus H_1]^{-1}.
\end{split}
\end{equation*}
Consequently, to prove \eqref{4.0}, we only need to check
\begin{align}\label{4.6}
q^{-{\langle M,H_{2}\rangle}-{\langle N,H_{1}\rangle}}\sum_{[L]\in{{\rm Iso}(\A)}}\displaystyle\frac{|\Ext_{A}^{1}(M,N)_{L}|}{|\Hom_{A}(M,N)|}[G_{L}]&
=q^{{\langle H_{1},G_{2}\rangle}-{\langle G_{2},H_{1}\rangle}+{\langle H_{2},H_{1}\rangle}}[G_1]\diamond[G_2].
\end{align}
Now, we set $K:=\ker(\Theta)$ in the diagram \eqref{4.3}. Then there exists an exact sequence
$$0\to \Hom_{A}(M,N)\to \Hom_{A}(G_1,N) \to \Hom_{A}(H_1,N)\to K\to0.$$
Thus
\begin{equation}\label{4.7}
|K|=q^{{\rm dim}_{k}\scriptstyle{\rm Hom}_{A}(H_1,N)-{\rm dim}_{k}\scriptstyle{\rm Hom}_{A}(G_1,N)+{\rm dim}_{k}\scriptstyle{\rm Hom}_{A}(M,N)}.
\end{equation}
For any $\delta':0\to G_2\to W\to G_1\to 0$ in the set $\Ext_{A}^{1}(G_1,G_2)_{W}$, one can show
$$|\{\delta\in{\Ext_{A}^{1}(M,N)}\ | \ \Theta(\delta)=\delta'\}|=|K|.$$ This leads to
$$|\{\delta\in{\Ext_{A}^{1}(M,N)}\ | \ \Theta(\delta)\in{\Ext_{A}^{1}(G_1,G_2)_{W}}\}|=|K||\Ext_{A}^{1}(G_1,G_2)_{W}|$$
and therefore \begin{equation}\label{4.8}
\sum_{[L]\in{{\rm Iso}(\A)}}|\Ext_{A}^{1}(M,N)_{L}|[G_{L}]=
|K|\sum_{[W]\in{{\rm Iso}(\A)}}|{\Ext_{A}^{1}(G_1,G_2)_{W}}|[W].
\end{equation}
Since $G_1\in\mathcal{B}$ and $H_{2}\in{\add(_{A}A)}$, the equality $\Ext_A^1(G_1, H_2)=0$ holds.
This implies that the sequence $0\to\Hom_{A}(G_1,H_{2})\to\Hom_{A}(G_1,G_{2}) \to \Hom_{A}(G_1,N)\to0$ is exact.
Consequently,
\begin{equation}\label{4.9}
\displaystyle{\rm dim}_{k}\Hom_{A}(H_1,N)-\displaystyle{\rm dim}_{k}\Hom_{A}(G_1,N)=
{\langle H_{1},N\rangle}+\displaystyle{\rm dim}_{k}\Hom_{A}(G_1,H_{2})-\displaystyle{\rm dim}_{k}\Hom_{A}(G_1,G_2).
\end{equation}
So we have

\begin{equation*}
\begin{aligned}
&\hspace{4mm}q^{-{\langle M,H_{2}\rangle}-{\langle N,H_{1}\rangle}}\sum_{[L]\in{{\rm Iso}(\A)}}\displaystyle\frac{|\Ext_{A}^{1}(M,N)_{L}|}{|\Hom_{A}(M,N)|}[G_{L}]\\
&=q^{-{\langle M,H_{2}\rangle}-{\langle N,H_{1}\rangle}+{\rm dim}_{k}\scriptstyle{\rm Hom}_{A}(H_1,N)-{\rm dim}_{k}\scriptstyle{\rm Hom}_{A}(G_1,N)}\sum\limits_{[W]\in{{\rm Iso}(\B)}}|\Ext_{A}^{1}(G_1,G_2)_{W}|[W]\quad(\mbox {by}\ \eqref{4.7},\;\eqref{4.8})\\
&=q^{-{\langle M,H_{2}\rangle}-{\langle N,H_{1}\rangle}+{\langle H_{1},N\rangle}+{\rm dim}_{k}\scriptstyle{\rm Hom}(G_1,H_{2})}\sum\limits_{[W]\in{{\rm Iso}(\B)}}\displaystyle\frac{|\Ext_{A}^{1}(G_1,G_2)_{W}|}{\Hom_{A}(G_1,G_2)}[W] \quad(\mbox {by}\ \eqref{4.9})\\
&=q^{-{\langle M,H_{2}\rangle}-{\langle N,H_{1}\rangle}+{\langle H_{1},G_2\rangle}-{\langle H_{1},H_2\rangle}+{\rm dim}_{k}\scriptstyle{\rm Hom}_{A}(G_1,H_{2})}[G_1]\diamond[G_2] \quad(\mbox {since}\ {\langle H_{1},G_2\rangle}={\langle H_{1},H_{2}+N\rangle})\\
&=q^{{\langle H_{1},G_2\rangle}-{\langle N,H_{1}\rangle}}q^{-({\langle M,H_{2}\rangle}+{\langle H_{1},H_2\rangle}-{\rm dim}_{k}\scriptstyle{\rm Hom}_{A}(G_1,H_{2}))}[G_1]\diamond[G_2] \\
&=q^{{\langle H_{1},G_2\rangle}-{\langle N,H_{1}\rangle}}[G_1]\diamond[G_2] \quad(\mbox {since}\ {\langle G_{1},H_2\rangle}={\langle M+H_{1},H_{2}\rangle})\\
&=q^{{\langle H_{1},G_2\rangle}-{\langle G_{2},H_{1}\rangle}+{\langle H_{2},H_{1}\rangle}}[G_1]\diamond[G_2] \quad(\mbox {since}\ {\langle G_{2},H_1\rangle}={\langle H_{2}+N,H_{1}\rangle}).
\end{aligned}
\end{equation*}
This shows that \eqref{4.6} is true, and thus \eqref{4.0} is also true. So, $\psi$ is a homomorphism of algebras.

Finally, we show that $\psi$ factorizes through the canonical surjection $\HA\to \HA/I(\A)$.

Suppose $N\in{\mathcal{P}^{{\leqslant 1}}(\A)}$. Since $H_2\in\add(_{A}A)$, we have $G_{2}\in{\mathcal{P}^{{\leqslant 1}}(A)}$ (see the first column in the diagram \eqref{4.4}). As $G_{2}$ lies in $\mathcal{B}$,  it is projective. This implies
$G_{L}\cong{G_{1}\oplus G_{2}}$, and therefore
$$\psi([L])={q^{-\langle L,H_{L}\rangle}[G_{L}]\diamond[H_{L}]^{-1}}=
{q^{-\langle M\oplus N,H_{1}\oplus H_{2}\rangle}[G_{1}\oplus G_{2}]\diamond[H_{1}\oplus H_{2}]^{-1}}=\psi([N\oplus M]).$$
Consequently, $\psi$ induces a homomorphism of algebras
$\psi':\HA/I(\A)\to\mathcal{SDH}(\B).$
Since $\psi([K])$ is invertible in $\mathcal{SDH}(\B)$ for any $K\in{\mathcal{P}^{{\leqslant 1}}(\A)}$, $\psi'$ induces a unique homomorphism of algebras
$\widetilde{\psi}:\mathcal{SDH}(\A)\to\mathcal{SDH}(\B).$
Clearly, $\widetilde{\psi}\widetilde{\phi}={\rm Id}$, which means that $\widetilde{\phi}$ is injective. Since $\widetilde{\phi}$ is surjective, it is an isomorphism of algebras.
\end{proof}

A  consequence of Proposition \ref{prop:4.7} is the following.

\begin{cor}\label{corollary:4.9} Let $A$ be a finite-dimensional algebra over $k$ and $_{A}T$ a strong tilting module with $\pd(_AT)\leqslant 1$. Then there exists an isomorphism of algebras:
$\mathcal{SDH}(\mathcal{GP}(A))\cong\mathcal{SDH}(\mathcal{GP}^{\leqslant 1}(A)).$
\end{cor}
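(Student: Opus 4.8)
The plan is to reduce the statement to Proposition \ref{prop:4.7} by showing that the category occurring there already coincides with $\mathcal{GP}^{\leqslant 1}(A)$. Since $_{A}T$ is tilting, putting $B:=\End_{A}(T)^{op}$ turns $_{A}T_{B}$ into a tilting bimodule with $\pd(_{A}T)\leqslant 1$, so Proposition \ref{prop:4.7} supplies an algebra isomorphism $\mathcal{SDH}(\mathcal{GP}(A))\cong\mathcal{SDH}\bigl({^{\perp}(_{A}T)}\cap{\mathcal{GP}^{\leqslant 1}(A)}\bigr)$. Thus everything comes down to verifying the inclusion $\mathcal{GP}^{\leqslant 1}(A)\subseteq{^{\perp}(_{A}T)}$, i.e. $\Ext_{A}^{i}(M,T)=0$ for all $i\geqslant 1$ and all $M\in\mathcal{GP}^{\leqslant 1}(A)$.

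First I would handle $M\in\mathcal{GP}(A)$: applying $\Hom_{A}(M,-)$ to a projective resolution $0\to P_{1}\to P_{0}\to {_{A}T}\to 0$ and using $\Ext_{A}^{i}(M,P)=0$ for projective $P$ and $i\geqslant 1$ gives $\Ext_{A}^{i}(M,T)=0$ for all $i\geqslant 1$ (this also follows from Lemma \ref{prop:3.3}(2), since $_{A}T\in\mathcal{P}^{<\infty}(A)$). For a general $M\in\mathcal{GP}^{\leqslant 1}(A)$ I would invoke \cite[Lemma 2.17]{cfh}, already used in the proof of Lemma \ref{prop:3.3}, to obtain a short exact sequence $0\to M\to H\to G\to 0$ of $A$-modules with $\pd(_{A}H)\leqslant 1$ and $G\in\mathcal{GP}(A)$. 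Here $G\in{^{\perp}(_{A}T)}$ by the previous case, while $H\in\mathcal{P}^{<\infty}(A)\subseteq{^{\perp}(_{A}T)}$ precisely because $_{A}T$ is \emph{strong} tilting; reading off the long exact sequence obtained by applying $\Hom_{A}(-,T)$ to this sequence then forces $\Ext_{A}^{i}(M,T)=0$ for all $i\geqslant 1$, i.e. $M\in{^{\perp}(_{A}T)}$.

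Consequently ${^{\perp}(_{A}T)}\cap{\mathcal{GP}^{\leqslant 1}(A)}=\mathcal{GP}^{\leqslant 1}(A)$, and substituting this into the isomorphism above yields $\mathcal{SDH}(\mathcal{GP}(A))\cong\mathcal{SDH}(\mathcal{GP}^{\leqslant 1}(A))$, as claimed. I do not expect a genuine obstacle in this argument: all the substance is carried by Proposition \ref{prop:4.7}, and the only real point to check is that the strong tilting hypothesis — as opposed to plain tilting — is exactly what is needed to place the finite-projective-dimension module $H$ inside ${^{\perp}(_{A}T)}$.
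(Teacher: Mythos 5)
Your proof is correct and takes essentially the same route as the paper's: both reduce via Proposition \ref{prop:4.7} to the containment $\mathcal{GP}^{\leqslant 1}(A)\subseteq{^{\perp}(_{A}T)}$, both invoke \cite[Lemma 2.17]{cfh} to obtain the sequence $0\to M\to H\to G\to 0$, and both handle $G$ via Gorenstein-projectivity (the paper cites Lemma \ref{prop:3.3}(2), you also note this) and $H$ via the strong tilting hypothesis. The only cosmetic difference is that you additionally spell out the dimension-shift computation showing $\mathcal{GP}(A)\subseteq{^{\perp}(_{A}T)}$, whereas the paper relies directly on Lemma \ref{prop:3.3}(2).
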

\begin{proof} By Proposition \ref{prop:4.7}, it suffices to show ${^{\perp}(_{A}T)}\cap{\mathcal{GP}^{\leqslant 1}(A)}={\mathcal{GP}^{\leqslant 1}(A)}$. Let $M\in{\mathcal{GP}^{\leqslant 1}(A)}$. By \cite[Lemma 2.17]{cfh}, there exists an exact sequence $0\to M\to H\to G\to 0$ in $A$-mod with {\rm proj.dim$(_{A}H)\leqslant 1$} and $G\in{\mathcal{GP}(A)}$. It follows from Lemma \ref{prop:3.3}(2) that $G\in{\mathcal{W}{(_{A}T)}}$. Since $_{A}T$ is a strong tiling module with $\pd(_AT)\leqslant 1$, we have $H\in{^{\perp}(_{A}T)}$. Hence $M\in{^{\perp}(_{A}T)}$. This implies ${\mathcal{GP}^{\leqslant 1}(A)}\subseteq{{^{\perp}(_{A}T)}}$.
\end{proof}

{{\bf Proof of Theorem \ref{thm:4.10}}.}
\begin{proof}
Let $\A_{1}:={^{\perp}(_{A}T)}\cap{\mathcal{GP}^{\leqslant 1}(A)}$ and $\A_{2}:={^{\perp}(T_{B})}\cap{\mathcal{GP}^{\leqslant 1}(B^{op})}$. By Corollary \ref{GV}(1), $F:=\Hom_{A}(-,T):\mathcal{A}_{1}\to\mathcal{A}_{2}$ is a duality of weakly $1$-Gorenstein exact categories. Moreover, $G:=\Hom_{B\opp}(-,B):\mathcal{GP}(B^{op})\to \mathcal{GP}(B)$ is also a duality of weakly $1$-Gorenstein exact categories. By Propositions \ref{prop:4.7} and \ref{prop:4.6}, we obtain the following diagram of isomorphisms of algebras:
$$
\xymatrix@C=3em@R=3em{
  \mathcal{SDH}(\mathcal{GP}(A)) \ar[r]^-{\widetilde{\phi}_{A}}
                & \mathcal{SDH}(\A_{1})\ar[d]^{\Upsilon_{F}}  \\
                & (\mathcal{SDH}(\A_{2}))^{op}\ar[d]^{(\widetilde{\psi}_{B^{op}})^{op}}  \\
                 \mathcal{SDH}(\mathcal{GP}(B))
                & (\mathcal{SDH}(\mathcal{GP}(B^{op}))^{op}\ar[l]_-{(\Upsilon_{G})^{op}}}
$$
Define $$\Xi:={(\Upsilon_{G})^{op}}{(\widetilde{\psi}_{B^{op}})^{op}}{\Upsilon_{F}}{\widetilde{\phi}_{A}}: \;\mathcal{SDH}(\mathcal{GP}(A)) \to \mathcal{SDH}(\mathcal{GP}(B)).$$
Then $\Xi$ is an isomorphism of algebras. It remains to show that, for any $G\in{\mathcal{GP}(A)}$,
$$\Xi(G)=q^{-\langle L,G\rangle}[F(L)]^{-1}\diamond[F(Z)],$$
where $L$ and $Z$ are given in Theorem \ref{thm:4.10}.

In fact, since $G\in\mathcal{GP}(A)$, there is an exact sequence $0\to K\to P\to G\to 0$ in $A$-mod such that $P\in\add(_AA)$ and $K\in\mathcal{GP}(A)$. As $_{A}T$ is $1$-tilting, there is an exact sequence $0\to P\to T_{0}\to T_{1}\to0$ in $A$-mod with $T_{0},T_{1}\in{\add(_{A}T)}$. By pushout, we construct the diagram $(\sharp)$:
$$\xymatrix{&&0\ar[d]&0\ar[d]&\\
0\ar[r]&K\ar[r]\ar@{=}[d]&P\ar[r]\ar[d]&G
\ar[r]\ar[d]& 0\\
0\ar[r]&K\ar[r]&T_{0}\ar[d]\ar[r]&X
\ar[r]\ar[d]& 0\\
&&T_{1}\ar@{=}[r]\ar[d]&T_{1}\ar[d]&\\
&&0&\ 0.&\\
}$$
Since $T$ is $1$-tilting, $(_{A}T)^{\perp}={\rm Gen}(_AT)$,  the smallest full subcategory of $A\modcat$ containing $_AT$ and being closed under direct sums and quotients. This forces $X\in{(_{A}T)^{\perp}}$. Since $T_1\in\add(_AT)\subseteq (_{A}T)^{\perp}$, the third column in the diagram $(\sharp)$ implies that the map
$G\to X$ is a left  $(_{A}T)^{\perp}$-approximation of $G$. Now, let $f:G\to Z$ be a minimal left $(_{A}T)^{\perp}$-approximation with $L:={\rm Coker}(f)$. Then $f$ is injective; $Z$ and $L$ are isomorphic to direct summands of $X$ and $T_1$, respectively. In particular, $Z\in{\rm Gen}(_AT)$ and $L\in\add(_AT)$.   Since $\mathcal{GP}(A)\cup \add(_AT)\subseteq \A_1$ and $\A_1$ is closed under extensions in $A\modcat$, we have $Z\in\A_1$.  By $Z\in{\rm Gen}(_AT)$, there exists an exact sequence $0\to N\to H\to Z\to 0$ of $A$-modules with $H\in{\add(_{A}T)}$. Since $\A_1\subseteq A\modcat$ is a resolving subcategory, $N\in\A_{1}$. So, the sequence is exact in $\A_1$. Applying $F$ to the sequence yields an exact sequence $0\to F(Z)\to F(H)\to F(N)\to 0$ in $\A_2$. From $F(H)\in\add(B_B)$ and $F(N)\in{\mathcal{GP}^{\leqslant 1}(B^{op})}$, it follows that $F(Z)\in{\mathcal{GP}(B^{op})}$. Hence we have an exact sequence in $\A_2$
\begin{equation}\label{4.8.1}
0\longrightarrow F(L)\longrightarrow F(Z)\longrightarrow F(G)\longrightarrow 0
\end{equation}
such that $F(Z)$ is Gorenstein projective and $F(L)$ is projective. Moreover, there are equalities
\begin{equation}
\begin{aligned}\label{4.8.2}
\langle F(G),F(L)\rangle&=\displaystyle\textrm{dim}_{k}\Hom_{B\opp}(F(G),F(L))-
\displaystyle\textrm{dim}_{k}\Ext^{1}_{B}(F(G),F(L))\\
&=\displaystyle\textrm{dim}_{k}\Hom_{A}(L,G)-
\displaystyle\textrm{dim}_{k}\Ext^{1}_{B}(L,G)\quad(\mbox{by}\ \mbox{Lemma} \ \ref{lemma:3.1})\\
&=\langle L,G\rangle.
\end{aligned}
\end{equation}
Thus
\begin{equation*}
\begin{aligned}
\Xi([G])&=((\Upsilon_{G})^{op}}{(\widetilde{\psi}_{B^{op}})^{op}}{\Upsilon_{F}}{\widetilde{\phi}_{A})([G])\\
&={(\Upsilon_{G})^{op}}{(\widetilde{\psi}_{B^{op}})^{op}}([F(G)])\\
&={(\Upsilon_{G})^{op}}(q^{-\langle F(G),F(L)\rangle}[F(L)]^{-1}\diamond[F(Z)])\quad(\mbox{by}\ \eqref{4.8.1})\\
&=q^{-\langle F(G),F(L)\rangle}[GF(L)]^{-1}\diamond[GF(Z)]\\
&=q^{-\langle F(G),F(L)\rangle}[\Hom_{A}(T,L)]^{-1}\diamond[\Hom_{A}(T,Z)]\quad(\mbox{by}\ \mbox{Lemma} \ \ref{lemma:3.1})\\
&=q^{-\langle L,G\rangle}[\Hom_{A}(T,L)]^{-1}\diamond[\Hom_{A}(T,Z)]\quad(\mbox{by}\ \eqref{4.8.2}).
\end{aligned}
\end{equation*}
This finishes the proof of Theorem \ref{thm:4.10}.
\end{proof}

\begin{cor}\label{corollary:4.9'} Let $_{A}T_{B}$ be a tilting bimodule with $\pd(_AT)\leqslant 1$.
\begin{enumerate}
\item {\rm(}\cite[Theorem A15]{Lu2022}{\rm)} If $A$ is a finite-dimension 1-Gorenstein $k$-algebra, then there exists an isomorphism of algebras:
$\mathcal{SDH}(A)\cong\mathcal{SDH}(\mathcal{GP}(A)).$
\item {\rm(}\cite[Theorem A22]{Lu2022}{\rm)} If $A$ and $B$ are finite-dimension 1-Gorenstein $k$-algebras, then there exists an isomorphism of algebras: $\mathcal{SDH}(A)\cong\mathcal{SDH}(B)$.
\end{enumerate}
\end{cor}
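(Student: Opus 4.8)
Both statements are immediate specializations of results already proved in Section~\ref{Ringel-Hall-algebras-Gorenstein}, so the proposal is essentially one of bookkeeping. For (1) I would apply Corollary~\ref{corollary:4.9} to a well-chosen strong tilting module on $A$ and then use the $1$-Gorenstein hypothesis to identify the output category. For (2) I would combine (1), applied to $A$ and to $B$, with Theorem~\ref{thm:4.10}.

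\textbf{Proof of (1).} The strong tilting module I would use is $T':={}_AD(A_A)$, the minimal injective cogenerator of $A\modcat$. Since $A$ is $1$-Gorenstein, $\pd({}_AT')=\id(A_A)\leqslant 1$, so (T1) holds; $T'$ is injective, hence $\Ext_A^i(-,T')=0$ for all $i\geqslant 1$, which gives (T2) and also ${}^{\perp}({}_AT')=A\modcat\supseteq\mathcal{P}^{<\infty}(A)$; and the minimal injective copresentation of ${}_AA$, which has length $\leqslant 1$ because $\id({}_AA)\leqslant 1$, is an exact sequence of the shape required in (T3). Thus $T'$ is a strong tilting $A$-module with $\pd({}_AT')\leqslant 1$, and Corollary~\ref{corollary:4.9} yields an algebra isomorphism $\mathcal{SDH}(\mathcal{GP}(A))\cong\mathcal{SDH}(\mathcal{GP}^{\leqslant 1}(A))$. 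It remains to identify $\mathcal{GP}^{\leqslant 1}(A)$: by Lemma~\ref{lemma:2.1} it consists of the modules $M$ with $\Omega^1_A(M)\in\mathcal{GP}(A)$, and over a $1$-Gorenstein algebra this is all of $A\modcat$, since every module has Gorenstein-projective dimension at most $1$. Hence $\mathcal{GP}^{\leqslant 1}(A)=A\modcat$ as exact categories, and $\mathcal{SDH}(A\modcat)=\mathcal{SDH}(A)$ by definition; combining the two identifications gives $\mathcal{SDH}(\mathcal{GP}(A))\cong\mathcal{SDH}(A)$.

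\textbf{Proof of (2).} Here $A$ and $B$ are finite-dimensional $1$-Gorenstein $k$-algebras, and $B=\End_A(T)^{op}$ is automatically finite-dimensional over $k$, so part (1) applies to both: $\mathcal{SDH}(A)\cong\mathcal{SDH}(\mathcal{GP}(A))$ and $\mathcal{SDH}(B)\cong\mathcal{SDH}(\mathcal{GP}(B))$. Applying Theorem~\ref{thm:4.10} (equivalently Corollary~\ref{Cor-1}(3)) to the tilting bimodule ${}_AT_B$ with $\pd({}_AT)\leqslant 1$ gives $\mathcal{SDH}(\mathcal{GP}(A))\cong\mathcal{SDH}(\mathcal{GP}(B))$. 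Composing the three isomorphisms produces $\mathcal{SDH}(A)\cong\mathcal{SDH}(B)$, as desired.

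\textbf{Expected difficulty.} There is no real obstacle once Corollary~\ref{corollary:4.9} and Theorem~\ref{thm:4.10} are in hand; the argument is formal. The only inputs that are not mere bookkeeping are the two classical facts about $1$-Gorenstein algebras invoked in (1) --- that ${}_AD(A_A)$ is a strong tilting module of projective dimension at most $1$, and that $\mathcal{GP}^{\leqslant 1}(A)=A\modcat$ --- and it is precisely the second that makes the output of Corollary~\ref{corollary:4.9} collapse onto $\mathcal{SDH}(A)$, so the $1$-Gorenstein hypothesis is exactly what is needed.
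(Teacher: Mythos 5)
Your proposal is correct and is essentially the paper's own proof: you take the same strong tilting module $T'={}_AD(A_A)$ (the paper's $\Hom_k(A,k)$), verify $\pd({}_AT')\leqslant 1$ and the strong tilting conditions from the $1$-Gorenstein hypothesis, invoke Corollary~\ref{corollary:4.9}, and identify $\mathcal{GP}^{\leqslant 1}(A)=A\modcat$; part (2) is then the same composition with Theorem~\ref{thm:4.10}. No gap and no genuine deviation from the paper's route.
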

\begin{proof}
Let $A$ be a finite-dimension 1-Gorenstein $k$-algebra. Then $A$-mod is a weakly 1-Gorenstein satisfying {\rm(E-a)-(E-d)}. Recall that $\mathcal{SDH}(A):=\mathcal{SDH}(A\modcat)$ is the semi-derived Ringel-Hall algebra of $A$.
Set $_{A}T:=\Hom_{k}(A,k)$, the ordinary injective cogenerator for $A\modcat$.
Then $_{A}T$ is a  strong tilting module and $\pd(_AT)\leqslant 1$. Moreover, $A\modcat={\mathcal{GP}^{\leqslant 1}(A)}={^{\perp}(_{A}T)}\cap{\mathcal{GP}^{\leqslant 1}(A)}$. Now, $(1)$ is a direct consequence of Corollary \ref{corollary:4.9} and (2) follows from $(1)$ and Theorem \ref{thm:4.10}.
\end{proof}

\begin{remark}\label{Different}
We point out that our proof of Corollary \ref{corollary:4.9'} is different from the proof given in \cite{Lu2022}.

$(a)$ In the proof of \cite[Theorem A15]{Lu2022}, under the assumption that $A$ is $1$-Gorenstein, the map $\psi:\mathcal{H}(\A)\to\mathcal{SDH}(\mathcal{GP}(A))$ (see the proof of Proposition \ref{prop:4.7} for $T:=\Hom_{k}(A,k)$) was shown to induce a unique morphism of $\mathcal{T}(A)$-bimodules $\widetilde{\psi}:\mathcal{SDH}(\A)\to\mathcal{SDH}(\mathcal{GP}(A))$ by using an explicit description of $\mathcal{SDH}(\A)$ as a $\mathcal{T}(A)$-bimodule (see \cite[Proposition A13]{Lu2022}), where $\A=A\modcat$ and $\mathcal{T}(A):=\mathcal{SDH}(\mathcal{P}^{\leqslant 1}(A))$ is a subalgebra of $\mathcal{SDH}(A)$. In our proof, we have shown that $\psi:\mathcal{H}(\A)\to\mathcal{SDH}(\mathcal{GP}(A))$ for a general $1$-tilting module ${_A}T$ (see the proof of Proposition \ref{prop:4.7}) is an algebra homomorphism and automatically induces an algebra homomorphism $\widetilde{\psi}:\mathcal{SDH}(\A)\to\mathcal{SDH}(\mathcal{GP}(A))$
which is the inverse of $\widetilde{\phi}$. Moreover, the proof of Proposition \ref{prop:4.7} does not involve the structure of $\mathcal{SDH}(\A)$ as a $\mathcal{T}(A)$-bimodule.

$(b)$ In the proof of \cite[Theorem A22]{Lu2022}, when both $A$ and $B$ are $1$-Gorenstein, the additive equivalence between the coresolving subcategory $({_A}T)^{\perp}:=\{X\in A\modcat\mid \Ext_A^1(T,X)=0\}$ of $A\modcat$ and the resolving subcategory $\mathcal{Y}:=\{Y\in B\modcat\mid{\rm Tor}_1^B(T,Y)=0\}$ of $B\modcat$ (by the Brenner-Butler tilting theorem) was applied to establish the second isomorphism in the following algebra isomorphisms:
$$\mathcal{SDH}(A)\cong\mathcal{SDH}((_{A}T)^{\perp})\cong
\mathcal{SDH}(\mathcal{Y})\cong\mathcal{SDH}(B),$$
where $(_{A}T)^{\perp}$ and  $\mathcal{Y}$ are weakly $1$-Gorenstein exact categories, and the first isomorphism is induced from the inclusion $({_A}T)^{\perp}\subseteq A\modcat$ (see \cite[Proposition A21]{Lu2022}). However, for a general algebra $A$, the category $(_{A}T)^{\perp}$ may not be weakly $1$-Gorenstein, and therefore $\mathcal{SDH}((_{A}T)^{\perp})$ is not well defined in general.
In our proof of Corollary \ref{corollary:4.9'}(2), we use the resolving dualities in Corollary \ref{GV}(1) and establish a series of algebra isomorphisms:
$$\mathcal{SDH}(A)\cong\mathcal{SDH}(\mathcal{GP}(A))\cong\mathcal{SDH}(\mathcal{GP}(B))\cong\mathcal{SDH}(B).$$
\end{remark}

\smallskip
{\bf Acknowledgements.} The research was supported by the National Natural Science Foundation of China (Grant 12122112, 12031014 and 12171206). Also, the author J. S. Hu thanks the Natural Science Foundation of Jiangsu Province (Grant No. BK20211358) and Jiangsu 333 Project for partial support.

\vspace{4mm}
\noindent\textbf{Hongxing Chen}\\
School of Mathematical Sciences \&  Academy for Multidisciplinary Studies, \\Capital Normal University, Beijing 100048, P. R. China;\\
Email: \textsf{chenhx@cnu.edu.cn}\\[1mm]
\textbf{Jiangsheng Hu}\\
School of Mathematics and Physics, Jiangsu University of Technology, Changzhou 213001,\\ P. R. China.\\
Email: \textsf{jiangshenghu@hotmail.com}\\[1mm]

\begin{thebibliography}{99}
\bibitem{Anderson} F.W. Anderson and K.R. Fuller, Rings and Categories of Modules, Springer, New York, 1992.

\bibitem{Angeleri}  L. Angeleri-H\"{u}gel, D. Happel and H. Krause (Eds.), Handbook of Tilting Theory, London Math. Soc. Lecture Note Ser., vol. 332, Cambridge University Press, Cambridge, 2007.

\bibitem{AB1969}  M. Auslander and M. Bridger, Stable module category, Mem. Amer. Math. Soc. 94, 1969.

\bibitem{AR1991} M. Auslander and I. Reiten, Applications of contravariantly finite subcategories, {\it Adv. Math.} 86 (1991) 111-152.

\bibitem{AM2002} L.L. Avramov and A. Martsinkovsky, Absolute, relative and Tate cohomology
of modules of finite Gorenstein dimension, {\it Proc. Lond. Math. Soc.} (3) 85 (2002)
393-440.

\bibitem{Beli-Reiten} A. Beligiannis, I. Reiten, {Homological and homotopical aspects of torsion theories}, Mem. Amer. Math. Soc. vol. 188, 2007.

\bibitem{Bridgeland} T. Bridgeland, Quantum groups via Hall algebras of complexes, {\it Ann. of Math.} 177 (2013) 739-759.

\bibitem{Broue} M. Brou\'{e}, Equivalences of blocks of group algebras, in: Finite Dimensional Algebras and Related Topics, Kluwer, 1994, pp. 1-26

\bibitem{Brenner}  S. Brenner and M.C.R. Butler, Generalizations of the Bernstein-Gelfand-Ponomarev reflection functors, in Representation Theory II , Lecture Notes in Mathematics, Vol. 832, Springer, Berlin, 1980, pp. 103-169.

\bibitem{chen-xi} H.X. Chen and C.C. Xi, Recollements induced from tilting modules over
tame hereditary algebras, {\it Forum Math.} 27 (2015) 1849-1901.

\bibitem{chen-xi-2022}H.X. Chen and C.C. Xi, Symmetric subcategories, tilting modules, and derived recollements, arXiv:2106.05514.

\bibitem{cfh} L.W. Christensen, A. Frankild and H. Holm, {On Gorenstein projective, injective and flat dimensions-a functorial description with applications}, {\it J. Algebra} 302 (2006) 231-279.

\bibitem{Daniel} D. Daniel and B. Shipley, $K$-theory and derived equivalences, {\it Duke Math. J.} 124(3) (2004) 587-617.

\bibitem{GRS} E.L. Green, I. Reiten and $\varnothing$. Solberg, Dualities on Generalized Koszul Algebras, Mem. Amer. Math. Soc. vol. 159, 2002.

\bibitem{EJ1995}E.E. Enochs and O.M.G. Jenda, Gorenstein injective and projective modules, {\it Math. Z.} 220 (1995) 611-633.

\bibitem{EJ} E.E. Enochs and O.M.G. Jenda, Relative
Homological Algebra, Walter de Gruyter, Berlin, New York, 2000.

\bibitem{Gorsky} M. Gorsky, Semi-derived and derived Hall algebras for stable categories, {\it Int. Math. Res. Not.} 2018 (2018), 138-159.

\bibitem{Happel1988} D. Happel, Triangulated Categories in Representation Theory of Finite Dimensional Algebras,
Lond. Math. Soc. Lect. Notes Ser. 119, Cambridge Univ. Press, Cambridge, 1988.

\bibitem{Happel-Ringel}  D. Happel and C.M. Ringel, Tilted algebras, {\it Trans. Amer. Math. Soc.} 274 (1982) 399-443.

\bibitem{Huisgen} B. Huisgen-Zimmermann, Dualities from iterated tilting, {\it Isr. J. Math.} 243 (2021) 315-353.

\bibitem{HS} B. Huisgen-Zimmermann and  M. Saor\'in, Dualities for modules of finite projective dimension, Contemp. Math. 761: 61-101, Amer. Math. Soc., Providence, RI, 2021.

\bibitem{keller} B. Keller, Derived categories and their uses, In: M. Hazewinkel, Ed., Handbook of algebra, vol. 1, (1996) 671-701.

\bibitem{Lam} T.Y. Lam, Lectures on Modules and Rings, Graduate Texts in Mathematics 189,
Springer-Verlag, New York, 1999.

\bibitem{LP21}M. Lu and L.G. Peng, Semi-derived Ringel-Hall algebras and Drinfeld double, {\it Adv. Math.} 383 (2021) 107668.

\bibitem{Lu2022}M. Lu and W.Q. Wang, {Hall algebras and quantum symmetric pairs I: foundations}, {\it Proc. Lond. Math. Soc.} 124 (2022) 1-82.

\bibitem{Mabiao} B. Ma and J. Sauter, On faithfully balanced modules, F-cotilting and
$F$-Auslander algebras, {\it J. Algebra} 556 (2020) 1115-1164.

\bibitem{Mantese} F. Mantese and I. Reiten, {Wakamatsu tilting modules}, {\it J. Algebra} 278 (2004) 532-552.

\bibitem{Miyashita} Y. Miyashita, Tilting modules of finite projective dimension, {\it Math. Z.}
193 (1986) 113-146.

\bibitem{Pan-and-xi}  S.Y. Pan and C.C. Xi, Finiteness of finitistic dimension is invariant
under derived equivalences, {\it J. Algebra} 322 (2009) 21-24.

\bibitem{Positselski} L. Positselski, Contraherent cosheaves, arXiv:1209.2995v6 (2017).

\bibitem{Quillen} D. Quillen, Higher algebraic $K$-theory, I. In: Algebraic $K$-theory, I: Higher $K$-theories (Seattle, 1972), Lecture Notes in Mathematics 341,
85-147. Springer-Verlag, Berlin, 1973.

\bibitem{Rickard}  J. Rickard, Morita theory for derived categories, {\it J. Lond. Math. Soc.} 39 (1989) 436-456

\bibitem{RZ1} C.M. Ringel and P. Zhang, Gorenstein-projective and semi-Gorenstein-projective modules, {\it Algebra} \& {\it Number Theory} 14 (2020) 1-36.

\bibitem{Wakamatsu} T. Wakamatsu, {On modules with trivial self-extensions}, {\it J. Algebra} 114 (1988) 106-114.

\bibitem{Xi}  C.C. Xi, On the finitistic dimension conjecture, III: related to the pair $eAe$ and $A$, {\it J. Algebra} 319 (2008) 3666-3688.

\end{thebibliography}
\end{document}